\documentclass[11pt,reqno]{amsart}
\usepackage{amssymb,amsmath,amsthm,amscd,amsfonts }
\usepackage{enumerate}


\usepackage{color}

\textwidth6.25in
\textheight9in

\setlength{\topmargin}{.125in} \addtolength{\topmargin}{-\headheight}
\addtolength{\topmargin}{-\headsep}

\setlength{\oddsidemargin}{0in}

\oddsidemargin  0.0in \evensidemargin 0.0in


\newtheorem{thm}{Theorem}[section]
\newtheorem{thmA}{Theorem}

\newtheorem{cor}[thm]{Corollary}
\newtheorem{lem}[thm]{Lemma}

\newtheorem{prop}[thm]{Proposition}
\newtheorem{quest}[thm]{Question}

\stepcounter{equation}

\theoremstyle{definition}
\newtheorem{defn}[thm]{Definition}
\newtheorem{rmk}[thm]{Remark}
\newtheorem{exa}[thm]{Example}


\newcommand{\mb}[1]{\mathbb{#1}}

\newcommand{\im}{\operatorname{im}}
\renewcommand{\phi}{\varphi}
\newcommand{\cl}{\operatorname{cl}}
\newcommand{\Ball}{{\mathrm{B}}}
\renewcommand{\d}{\operatorname{dist}}

\newcommand{\conn}{\operatorname{Con}^\omega}

\newcommand{\cone}{\operatorname{Cone}}



\begin{document}

\date{}
\title{Asymptotic cones and boundaries of CAT(0) groups}
\author{Curtis Kent \and Russell Ricks}\thanks{The first author was supported by Simons Foundation Collaboration Grant 587001. The second author was partially supported by NSF RTG 1045119.}


\begin{abstract}
 It is well known that the Euclidean cone over the Tits boundary of a proper cocompact CAT(0) space isometrically embeds into every asymptotic cone of the space.  We explore the relationships between the asymptotic cones of a CAT(0) space and its boundary under both the standard visual (i.e. cone) topology and the Tits metric.  We show that the set of asymptotic cones of a proper cocompact CAT(0) space admits canonical connecting maps under which the direct limit is isometric to the Euclidean cone on the Tits boundary.  The resulting projection from any asymptotic cone to the Tits boundary is determined by the visual topology; on the other hand, the visual topology can be recovered from the connecting maps between asymptotic cones.  We also demonstrate how maps between asymptotic cones induce maps between Tits boundaries.
\end{abstract}

\maketitle


\setcounter{section}{0}

\section{Introduction }

Asymptotic cones and compactifications are two very different
 approaches to studying the coarse geometry of groups and spaces.  In general, asymptotic cones of a metric space $X$ will not be locally compact and not preserve the local geometry of $X$ while compactifications often can be endowed with metrics topologically compatible with the local geometry of $X$.
However, asymptotic cones do reflect well the coarse Euclidean aspects of a space; and for CAT(0) spaces, the asymptotic cones are complete CAT(0) spaces.


Intuitively, an asymptotic cone of a metric space $(X, \d)$ is the limit of the metric spaces $(X, \d/d_n)$ where $\d/d_n$ is the metric on $X$ scaled by $1/d_n$.  Asymptotic cones have the desirable property that quasi-isometric spaces have bi-Lipschitz asymptotic cones.  However, an ultrafilter is required to guarantee the existence of the limit.     As a consequence, they also have the drawback that a metric space can have uncountably many distinct asymptotic cones depending on the choice of ultrafilter and scaling sequence used in the construction.

There are many connections between the topological structure of the asymptotic cones of a finitely generated group and its combinatorial and algorithmic properties.  For example, a group has polynomial growth if and only if it is virtually nilpotent, if and only if every asymptotic cone is locally compact \cite{Gromov1,Pansu}.  Also, a finitely generated group is finitely presented and has polynomial Dehn function if all of its asymptotic cones are simply connected \cite{gr2}.

Another approach to studying the coarse geometry of groups and spaces is via boundaries.  The visual boundary is one natural compactification for proper CAT(0) and hyperbolic spaces.  The visual boundary is the set of large-scale directions and is an isometry invariant of the space.  In fact, for hyperbolic groups the visual boundary is a quasi-isometry invariant.  However, Croke and Kleiner proved that a group can act geometrically on two CAT(0) spaces with non-homeomorphic visual boundaries \cite{CrokeKleiner00}.  Thus in the CAT(0) setting the visual boundary is not a quasi-isometry invariant.

The boundary of a CAT(0) space can also be endowed with a metric called the Tits metric which reflects the Euclidean structure of the CAT(0) space.  With the Tits metric, the set of large-scale directions is no longer a compactification but in some sense better encodes the coarse Euclidean geometry of the space.



We will illustrate how the visual boundary, the Tits boundary, and the asymptotic cones of proper CAT(0) spaces relate.  It is well known that the Euclidean cone over the Tits boundary admits a canonical isometric embedding into every asymptotic cone.  We will show that the set of asymptotic cones of a proper CAT(0) space determine the Tits boundary of the space.

\begin{thmA}[Theorem \ref{direct limit}]
Let $X$ be a proper cocompact CAT(0) space.   The direct limit of asymptotic cones of $X$ induced by the geodesic retraction on $X$ is isometric to the Euclidean cone on the Tits boundary of $X$.
Moreover, the resulting projection maps onto the direct limit are determined by the visual topology on the boundary.
\end{thmA}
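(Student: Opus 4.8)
The plan is to fix a nonprincipal ultrafilter $\omega$ and a basepoint $e\in X$ throughout, to organize the asymptotic cones $\conn\bigl(X,e,(d_n)\bigr)$ (over all scaling sequences with $\lim_\omega d_n=\infty$) into a direct system whose bonding maps are the rescaled geodesic retractions toward $e$, and then to show that in the direct limit every point collapses onto the isometric copy of $\cone(\partial_T X)$ that already sits inside each cone.

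I would first construct the bonding maps. For $\lambda\in(0,1]$ let $\phi_\lambda\colon X\to X$ be the geodesic retraction toward $e$ by the factor $\lambda$, that is, $\phi_\lambda(x)$ is the point of $[e,x]$ at distance $\lambda\,\d(e,x)$ from $e$; comparing the triangle $\Delta(e,x,y)$ with its Euclidean comparison triangle shows $\phi_\lambda$ is $\lambda$-Lipschitz, and clearly $\phi_\lambda\circ\phi_\mu=\phi_{\lambda\mu}$ while $\phi_\lambda\bigl(\gamma_\xi(t)\bigr)=\gamma_\xi(\lambda t)$ for the ray $\gamma_\xi$ from $e$ to $\xi\in\partial_T X$. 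Directing the scaling sequences by declaring $(d_n)\preceq(e_n)$ when $e_n\le d_n$ for $\omega$-almost every $n$ (coordinatewise minima furnish upper bounds), the assignment $[(x_n)]\mapsto\bigl[(\phi_{e_n/d_n}(x_n))\bigr]$ is a well-defined $1$-Lipschitz map $\psi_{(d_n)\to(e_n)}\colon\conn\bigl(X,e,(d_n)\bigr)\to\conn\bigl(X,e,(e_n)\bigr)$ whenever $(d_n)\preceq(e_n)$, and $\phi_\lambda\circ\phi_\mu=\phi_{\lambda\mu}$ makes $\{\psi\}$ a direct system; these verifications — independence of representatives, the Lipschitz bound, functoriality — are routine. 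Since $\phi_{e_n/d_n}$ carries $\gamma_\xi$ to a reparametrization of $\gamma_\xi$, the (well-known) isometric embeddings $j_{(d_n)}\colon\cone(\partial_T X)\hookrightarrow\conn\bigl(X,e,(d_n)\bigr)$, $r\xi\mapsto[(\gamma_\xi(rd_n))]$, commute with the $\psi$'s, so they induce a single distance-preserving map $\iota\colon\cone(\partial_T X)\to\varinjlim$. It then suffices to prove $\iota$ is onto: it will follow that $\iota$ is a bijective isometry, and hence, $\cone(\partial_T X)$ being complete, that the direct limit is isometric to $\cone(\partial_T X)$.

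The crux is that every point of every cone is identified, through a sufficiently slow bonding map, with a point of $\cone(\partial_T X)$. Let $p=[(x_n)]\in\conn\bigl(X,e,(d_n)\bigr)$ and $s:=\d([e],p)$; if $s=0$ then $p=[e]=\iota(0)$, so assume $s>0$, whence $\d(e,x_n)\to\infty$ along $\omega$. By properness the geodesics $[e,x_n]$ converge along $\omega$, uniformly on compacta, to a geodesic ray $\rho$ from $e$; put $\xi=\rho(\infty)\in\partial_T X$. The point is that $p$ need not equal $s\xi$, since the $x_n$ may drift off $\rho$ at the scale $d_n$ — but I expect a diagonalization against $\omega$ to repair this. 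Using the $\omega$-large nested sets $A_k=\{n:\d(e,x_n)\ge k\ \text{and}\ \max_{t\le k}\d([e,x_n](t),\rho(t))<1\}$ and $f(n)=\sup\{k:n\in A_k\}$, set $e_n=\min\bigl(d_n,\lfloor f(n)/s\rfloor\bigr)$; then $(d_n)\preceq(e_n)$, $\lim_\omega e_n=\infty$, and $\lim_\omega e_n^{-1}\,\d\bigl(\phi_{e_n/d_n}(x_n),\rho(se_n)\bigr)=0$, so $\psi_{(d_n)\to(e_n)}(p)=[(\rho(se_n))]=j_{(e_n)}(s\xi)$ and $[p]=\iota(s\xi)$. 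The main obstacle is precisely this diagonal step: $[e,x_n]\to\rho$ only uniformly on compacta, so $(e_n)$ must be chosen to grow slowly enough that $se_n$ stays inside the window in which the convergence has already taken hold — intuitively, the direct limit zooms out so far that only the directions of geodesics survive, which is exactly $\cone(\partial_T X)$.

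It remains to identify the resulting projections. Tracing the identifications, the projection $\Pi_{(d_n)}\colon\conn\bigl(X,e,(d_n)\bigr)\to\varinjlim\cong\cone(\partial_T X)$ is given by $[(x_n)]\mapsto s\xi$ with $s=\lim_\omega\d(e,x_n)/d_n$ and $\xi=\bigl(\lim_\omega[e,x_n]\bigr)(\infty)$; equivalently, $\xi$ is the unique point of $\partial X$ satisfying $\lim_\omega\bigl(\text{retraction of }x_n\text{ onto }S(e,R)\bigr)=\gamma_\xi(R)$ for every $R>0$, where well-definedness comes from convexity of the distance between geodesics issuing from $e$. I would conclude by observing that the geodesic retractions onto the spheres $S(e,R)$ are exactly the bonding maps whose inverse limit is, by definition, the visual (cone) topology on $\partial X$; hence $\Pi_{(d_n)}$ is expressed purely in terms of the visual topology, which is the remaining assertion.
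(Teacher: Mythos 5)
Your proposal is correct, and its overall skeleton matches the paper's: you build the directed system from the rescaled geodesic retractions (your $\psi_{(d_n)\to(e_n)}$ is the paper's $\Theta^{d}_{d'}$, up to reversing the notation for the order), observe that the diagonal isometric embeddings $j_{(d_n)}$ (the paper's $\Psi_d$) commute with the bonding maps and hence induce a distance-preserving map into the direct limit, and then reduce everything to showing that every point eventually lands on the diagonal copy after sufficient retraction. Where you genuinely diverge is in that last, crucial step. The paper first uses cocompactness (via the Geoghegan--Ontaneda result that every point of $X$ lies within uniformly bounded distance of a geodesic ray from the basepoint) to write an arbitrary point of $\conn\bigl(X,(x_0),d\bigr)$ as $\Psi^\omega_d\bigl(t,(\alpha_n)\bigr)$ for a sequence of rays $\alpha_n$, and then invokes a separate lemma asserting that for sufficiently small scaling sequences $\Psi^\omega_{d'}\bigl(t,(\alpha_n)\bigr)=\Psi_{d'}\bigl(t,\lim^\omega\alpha_n\bigr)$, the ultralimit being taken in the visual boundary. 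You instead retract the representing points $x_n$ along their own segments $[e,x_n]$ and diagonalize against the uniform-on-compacta convergence $[e,x_n]\to\rho$ guaranteed by properness; your nested sets $A_k$ and the slow sequence $e_n=\min\bigl(d_n,\lfloor f(n)/s\rfloor\bigr)$ play exactly the role of the paper's choice $d_n=\sqrt{1/\d_C(\alpha,\alpha_n)}$, but applied to segments rather than to a sequence of rays. What your route buys is self-containment and slightly greater generality: the surjectivity step needs only properness (plus convexity of the distance between geodesics from $e$), not cocompactness, since you never need a ray passing near $x_n$ --- only the limit ray of the segments $[e,x_n]$. What the paper's route buys is the explicit formula $\Theta_d\circ\Psi^\omega_d\bigl(t,(\alpha_n)\bigr)=\Theta_d\circ\Psi_d\bigl(t,\lim^\omega\alpha_n\bigr)$, which it then reuses verbatim in the later results on the visual topology and on quasi-isometries; your description of the projection as $[(x_n)]\mapsto s\xi$ with $\xi=\lim^\omega x_n$ in the visual compactification is equivalent and equally justifies the ``determined by the visual topology'' clause. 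Two cosmetic points: a surjective distance-preserving map is already an isometry, so the appeal to completeness of $\cone(\partial_T X)$ at the end of your second paragraph is unnecessary; and in the final estimate you should note that the retraction parameter $\tfrac{e_n}{d_n}\d(e,x_n)=s_ne_n$ may differ from $se_n$ by $|s_n-s|\,e_n$, which is harmless but should be absorbed explicitly before dividing by $e_n$.
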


\noindent Thus the set of asymptotic cones of $X$ for a fixed ultrafilter determines the Tits boundary of $X$.  The connecting maps also give rise to an inverse limit, which is related to the countable ultraproduct of the  Tits boundary.

\begin{thmA}[Theorem \ref{inverse limit}]
Let $X$ be a proper cocompact CAT(0) space.   The inverse limit of asymptotic cones of $X$ induced by the geodesic retraction on $X$ has an inverse limit metric and with this metric there exists a canonical isometric embedding of  the ultraproduct of the Euclidean cone on the Tits boundary of $X$ into the inverse limit.
\end{thmA}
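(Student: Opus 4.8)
\medskip
\noindent\emph{Plan of proof.}
The idea is to promote the well known isometric copies of $\cone(\partial_T X)$ that sit inside the individual asymptotic cones to an embedding of an entire ultraproduct, by applying those embeddings simultaneously along a sequence and threading the resulting points through the geodesic retractions.

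For each scale $\lambda$, write $C_\lambda$ for the corresponding asymptotic cone (built from the scaling sequence $(\lambda d_n)$), and recall the canonical isometric embedding $\iota_\lambda\colon\cone(\partial_T X)\hookrightarrow C_\lambda$: it sends $(c,\xi)$ to the point represented by $n\mapsto\gamma_\xi(c\lambda d_n)$, where $\gamma_\xi$ is the geodesic ray from $e$ to $\xi$. This is the embedding referred to in the introduction, and it is the one used to form the direct system of Theorem~\ref{direct limit}. The first step is to record how $\iota_\lambda$ interacts with the connecting maps: for $\mu\le\lambda$ the geodesic retraction $C_\lambda\to C_\mu$ carries $\iota_\lambda(c,\xi)$ to $\iota_\mu(c',\xi)$, where $c'$ is obtained from $c$ by a truncation that is trivial as soon as $c$ is small relative to the two scales. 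Since the composite of two geodesic retractions is again a geodesic retraction (onto the smaller ball), compatible families are easy to produce: given a sequence $\bigl((c_k,\xi_k)\bigr)_k$ in $\cone(\partial_T X)$ with $(c_k)$ essentially bounded with respect to $\omega$, the points
\[
z_\lambda := \bigl[\,(\gamma_{\xi_k}(\min\{\lambda,c_k\}\,d_k))_k\,\bigr]\in C_\lambda
\]
form a thread of the inverse system, because $\min\{\mu,\min\{\lambda,c_k\}\}=\min\{\mu,c_k\}$ for $\mu\le\lambda$; for all sufficiently large $\lambda$ the thread stabilizes to $\bigl[(\gamma_{\xi_k}(c_k d_k))_k\bigr]$. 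This defines a map on sequences, and a short argument using only the triangle inequality in $\cone(\partial_T X)$ shows that it factors through the ultraproduct $\prod_\omega\cone(\partial_T X)$, giving the desired map $\Psi$; in particular $\Psi$ sends the cone point to the basepoint thread.

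It remains to show that $\Psi$ is isometric for the inverse limit metric, and this is where the real work lies. Because the rescaled geodesic retractions do not increase distance, the suitably normalized distances between the components of two threads are monotone in $\lambda$, so the inverse limit distance between $\Psi$-values is their supremum; writing it out via the Euclidean law of cosines in the comparison triangle at $e$, it becomes an $\omega$-limit of expressions $\sqrt{a_k^2+b_k^2-2a_kb_k\cos\bar\angle_e(\cdot,\cdot)}$ with $a_k=c_k$, $b_k=c_k'$ eventually. Monotonicity of comparison angles toward Tits angles immediately shows that $\Psi$ is $1$-Lipschitz, and so that this quantity is at most the ultraproduct distance $\lim_\omega\sqrt{c_k^2+(c_k')^2-2c_kc_k'\cos\angle_T(\xi_k,\eta_k)}$. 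The reverse inequality is the main obstacle: it says that the inverse limit genuinely recovers the \emph{full} Tits angles, not merely the comparison angles visible at the finite scale $d_k$. I expect to obtain it from the fact that a thread records each ray $\gamma_{\xi_k}$ at \emph{every} scale $\lambda d_k$ simultaneously, together with cocompactness of $X$: cocompactness makes $\partial_T X$ compact and forces the convergence $\bar\angle_e(\gamma_\xi(t),\gamma_\eta(t))\nearrow\angle_T(\xi,\eta)$ to be sufficiently uniform that the $\omega$-limit over $k$ and the limit over the scale may be interchanged. Once the isometry is in hand, canonicity of $\Psi$ and its independence of the basepoint follow exactly as in the proof of Theorem~\ref{direct limit}.
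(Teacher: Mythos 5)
Your overall shape---monotonicity of the distances along the system gives the $1$-Lipschitz bound, and the content is the reverse inequality---matches the paper's, but two things in your plan genuinely break down. First, your index set is too small: you build the inverse system only from the scalar rescalings $(\lambda d_n)$ of one fixed scaling sequence, whereas the paper's system runs over \emph{all} scaling sequences ordered by $d'\preceq d$ iff $d_n'\le d_n$ $\omega$-almost surely. This is not cosmetic. Take $X$ a regular tree and rays $\xi_k,\eta_k$ from $x_0$ that coincide on $[0,k\,d_k]$ and then branch; then $\angle(\xi_k,\eta_k)=\pi$ for every $k$, but for each fixed $\lambda$ one has $\xi_k(\lambda d_k)=\eta_k(\lambda d_k)$ for all $k\ge\lambda$, hence $\Psi^\omega_{\lambda d}\bigl(1,(\xi_k)\bigr)=\Psi^\omega_{\lambda d}\bigl(1,(\eta_k)\bigr)$ in every $C_\lambda$. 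Your supremum over $\lambda$ is therefore $0$ while the ultraproduct cone distance is $2$: over the scalar-multiple subsystem the map is not even injective. Second, the mechanism you propose for the reverse inequality is unavailable: cocompactness does \emph{not} make $\partial_T X$ compact (the paper shows that a proper cocompact CAT(0) space with compact Tits boundary has a quasi-dense flat, so this fails for essentially every non-Euclidean example), and the uniform interchange of the $\omega$-limit over $k$ with the limit over the scale is exactly what the tree example refutes.

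The paper's fix is to keep the full directed set of scaling sequences and argue pairwise: for a given pair $(\alpha_n),(\beta_n)\in\partial_\angle^\omega X$ choose $d_n$ with $\overline\angle_{x_0}\bigl(\alpha_n(\sqrt{d_n}),\beta_n(\sqrt{d_n})\bigr)\ge\angle(\alpha_n,\beta_n)-1/n$; then for every $\tilde d\succeq d$ the comparison angle at scale $t\tilde d_n\ge\sqrt{d_n}$ already sees the full Tits angle up to $1/n$, so $\Psi^\omega_{\tilde d}$ is isometric on the cone over that pair (Lemma \ref{isometric embedding2}, using Lemma \ref{countable} to combine the countably many constraints), and since $\d(x^d,y^d)$ is nondecreasing in $d$ the supremum defining the inverse limit metric is attained beyond that $d$. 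A smaller inaccuracy in your write-up: the connecting maps $\Theta^d_{d'}$ do not truncate; they rescale along geodesics from $x_0$ so as to \emph{preserve} the cone distance to the observation point (Lemma \ref{commutes}), which is what makes $d\mapsto\Psi^\omega_d\bigl(t,(\alpha_n)\bigr)$ a thread with no stabilization needed and keeps the inverse limit metric finite.
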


\noindent We leave it as an open question whether or not this embedding is surjective.

In Section \ref{visual section}, we demonstrate how the asymptotic cones determine the visual topology on the boundary.

\begin{thmA}[Theorem \ref{visual recovery}]
The visual topology on the boundary of a proper cocompact CAT(0) space is determined by the geodesic retraction maps between asymptotic cones.
\end{thmA}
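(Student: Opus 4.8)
The plan is to show that the visual topology is entirely encoded in the direct‑limit data, i.e. that it can be reconstructed from the asymptotic cones together with their geodesic‑retraction connecting maps, and hence is an invariant of this system. First I would record what Theorem~\ref{direct limit} gives for free: the direct limit of the system is isometric to $\cone(\partial_T X)$, so from the connecting maps alone we recover the metric space $\cone(\partial_T X)$, its unit sphere (which is the underlying set of $\partial X$ equipped with the Tits metric), and the canonical projections $\pi\colon\conn(X,e,(d_n))\to\cone(\partial_T X)$, since these are the structure maps of the direct limit and hence are determined by the connecting maps. Thus the Tits metric topology is recovered; the whole point is that the visual topology is in general strictly coarser than it, so we must squeeze finite‑scale information out of the (a priori purely asymptotic) connecting maps.

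The reduction I would use is the standard presentation of the visual boundary as an inverse limit of bounded pieces: $\overline X\cong\varprojlim_{R}\overline{\Ball}(e,R)$, with bonding maps the geodesic retractions, so that $(\partial X,\,\mathrm{visual})=\varprojlim_{R}\bigl(\partial\Ball(e,R),\,r_R\bigr)$, where $r_R$ denotes geodesic retraction onto the $R$‑ball; equivalently, writing $\gamma_\xi$ for the unit‑speed ray from $e$ to $\xi$, one has $\xi_k\to\xi$ visually iff $\d\bigl(\gamma_{\xi_k}(R),\gamma_\xi(R)\bigr)\to 0$ for every $R>0$. It therefore suffices to recover, for each $R$, the pseudometric $\delta_R(\xi,\eta):=\d\bigl(\gamma_\xi(R),\gamma_\eta(R)\bigr)$ on $\partial X$ up to uniform equivalence; the monotonicity $\delta_R\le\delta_{R'}$ for $R\le R'$ (because $r_R$ is $1$‑Lipschitz) then exhibits the visual topology as the resulting inverse limit, namely the join of the topologies of the $\delta_R$.

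The heart of the proof is extracting the family $\{\delta_R\}$ from the connecting maps. The relevant geometric mechanism is that the connecting maps are themselves induced by the geodesic retractions $r_s$ of $X$, and that $r_s$ is $1$‑Lipschitz with the sharper bound $\d\bigl(r_s(x),r_s(y)\bigr)\le 2s\sin\!\bigl(\tfrac12\angle_e(x,y)\bigr)$ on the sphere of radius $s$, where $\angle_e(x,y)\lesssim \d(x,y)/\min\{\d(e,x),\d(e,y)\}$ for $x,y$ far from $e$; this is exactly the estimate that lets $r_s$ descend to the asymptotic cones and their limits, and it lets us follow a single ray $\gamma_\xi$ through the system. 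Concretely, I would track the image of the boundary point $\xi$, sitting in $\conn(X,e,(d_n))$ via the canonical embedding of $\cone(\partial_T X)$, under the connecting maps toward cones at smaller sub‑scales $(e_n)$ (with $e_n\to\infty$ and $e_n/d_n\to 0$), together with the retractions onto $\overline{\Ball}(e,e_n)$ that these maps encode, and show that, as $(e_n)$ ranges over a cofinal family of sub‑scales, the distortion of these maps on the embedded boundary reproduces each $\delta_R$ up to uniform equivalence, hence the visual topology. The main obstacle — and the place where the real work lies — is making this "descent to finite scale'' legitimate: no single asymptotic cone remembers finite scales, so one must argue that the full connecting system collectively does, and in particular control the geodesic‑retraction maps near the cone point $e^\omega$, where they degenerate; one then has to check that the reconstructed topology is neither finer nor coarser than the visual one, which comes down to verifying that each sphere $\partial\Ball(e,R)$ and each bonding retraction $r_{R'}\to r_R$ is genuinely recovered, so that the inverse limit $\varprojlim_R\bigl(\partial\Ball(e,R),r_R\bigr)$ is reproduced on the nose.
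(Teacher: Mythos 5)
Your overall framing (recover the Tits data from the direct limit, then try to squeeze the strictly coarser visual topology out of the connecting maps) matches the paper's motivation, but the core mechanism you propose --- ``descending to finite scale'' and reconstructing the pseudometrics $\delta_R(\xi,\eta)=\d\bigl(\gamma_\xi(R),\gamma_\eta(R)\bigr)$ from the distortion of the connecting maps --- cannot work, and you have correctly located the obstacle without resolving it. Every scaling sequence in the system is $\omega$-divergent, so every cone in the system and every connecting map $\Theta^d_{d'}$ is blind to bounded-scale geometry: for instance, replacing the basepoint $x_0$ by a nearby $x_0'$ changes every finite-radius sphere $\partial\Ball_R(x_0)$ and every $\delta_R$, yet leaves the entire system of asymptotic cones and connecting maps canonically isomorphic (the discrepancies are $O(\d(x_0,x_0'))=o(d_n')$). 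So no finite-scale pseudometric is determined by the data, even up to uniform equivalence. Worse, the place you propose to read off the distortion --- the embedded copy of $\cone(\partial_T X)$, i.e.\ the image of $\Psi_d$ --- is precisely where the connecting maps are isometries (Proposition \ref{isometric embedding} together with $\Theta^d_{d'}\circ\Psi_d=\Psi_{d'}$), so restricted to the diagonal the system carries only Tits information and no visual information at all.

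The paper's actual route is different in kind: it reconstructs no metric, but instead characterizes which sequences converge visually, which suffices since $\partial_\infty X$ is compact metrizable. The visual information lives in the off-diagonal points $\Psi^\omega_d\bigl(t,(\alpha_n)\bigr)$ for non-constant sequences $(\alpha_n)$, and the key facts are Lemma \ref{surjective} (for all sufficiently small scaling sequences, $\Psi^{\omega}_{d}\bigl(t,(\alpha_n)\bigr)=\Psi_{d}(t,\alpha)$ where $\alpha=\lim^\omega\alpha_n$ is taken in the visual compactification) and its converse, Lemma \ref{converse}. Theorem \ref{visual recovery} then states that $\alpha_n\to\alpha_0$ visually if and only if for every bijection $\sigma$ of $\mathbb N$ there is a scaling sequence $d$ with $\Psi^\omega_d\bigl(t,(\alpha_{\sigma(n)})\bigr)=\Psi_d(t,\alpha_0)$ for all $t$; the quantification over permutations is what upgrades $\omega$-almost-sure convergence to genuine convergence of the whole sequence. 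If you want to salvage your write-up, abandon the inverse limit of finite spheres and prove this convergence criterion instead; recovering the topology through its convergent sequences is all the theorem claims and, by the basepoint example above, all that is possible.
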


In Section \ref{quasi-isometries}, the direct limit is used to define continuous maps between Tits boundaries of quasi-isometric CAT(0) spaces, which when restricted to Morse geodesics gives a bijection.  This gives an alternate proof that a CAT(0) group has a cut-point in some asymptotic cone if and only if it has cut-points in every asymptotic cone if and only if it has a periodic rank one element.


\section{Visual boundary}

A CAT(0) space is a uniquely geodesic metric space such that every geodesic triangle $\triangle (x,y,z)$ is thinner than the corresponding comparison triangle $\overline \triangle (x,y,z)$ in Euclidean $\mathbb R^2$.  This generalizes the property of nonpositive curvature from Riemannian manifolds to the metric setting. We refer the reader to \cite{BridHae99} for a more complete account.

The visual boundary of a complete CAT(0) space can be considered as either the set of equivalence classes of geodesic rays (equivalent if they are asymptotic) or the set of based geodesic rays.  Here we will use the latter. 

For a CAT(0) space $(X,\d)$ and $x,y\in X$, we will use $[x,y]$ to denote the unique unparameterized geodesic from $x$ to $y$, and $\Ball^{\d}_\epsilon(x)$ or $\Ball_\epsilon(x)$ to denote the open metric ball of radius $\epsilon$ about $x$.





\begin{defn}[Visual compactification and boundary]
Let $X$ be a CAT(0) metric space.   For a fixed $x_0\in X$, let $\partial X$ be the set of geodesic rays with basepoint $x_0$.

For $\alpha\in\partial X$ and  $\epsilon,R>0$, let  $$V(\alpha, R, \epsilon) = \bigl\{\beta\in \partial X \mid \d\bigl(\alpha(R), \im(\beta)\bigr)<\epsilon \bigr\} \cup\bigl\{x \in X \mid [x_0, x]\cap \Ball_\epsilon\bigl(\alpha(R)\bigr) \neq \emptyset \bigr\}.$$

If $X$ is a proper CAT(0) metric space (\emph{proper} meaning closed balls are compact), then $$\bigl\{ V(\alpha, R, \epsilon) \mid \alpha\in\partial X, \epsilon, R> 0\bigr\}\cup \bigl\{ \Ball_\epsilon(x) \mid x\in X, \epsilon> 0\bigr\}$$ is a basis for a compact topology on $\overline X = X\cup \partial X$, which we will call the \emph{visual compactification of $X$.}  Notice that $\partial X$ is a closed subspace.  The \emph{visual boundary of $X$} is the set $\partial X$ endowed with the subspace topology from $\overline X$ and will be denoted by $\partial_\infty X$.  The visual boundary has basis $\bigl\{U(\alpha, R, \epsilon) \mid \alpha\in\partial X, \epsilon, R> 0 \bigr\}$ where $U(\alpha, R, \epsilon) = V(\alpha, R, \epsilon)\cap \partial X$.
\end{defn}

\begin{defn}[Limit set]
  Let $X$ be a CAT(0) space and $A\subset X$.  The \emph{limit set} of $A$, denoted $\Lambda (A)$, is the closure of $A$ in the visual compactification intersected with the visual boundary, i.e. $\Lambda(A) = \cl_{\overline X} (A)\cap \partial_\infty X$ where $\cl_{\overline X}$ is the topological closure operator in $\overline X$.
\end{defn}

The visual boundary can be endowed with several natural metrics.  Here we will use the following metric since it relates well to the metric on asymptotic cones.   Fix $C>0$.  For two geodesics $\alpha, \beta: \bigl([0,\infty), 0\bigr) \to \left(X, x_0\right)$ into a CAT(0) space $X$, let $$\d_C(\alpha,\beta ) = \frac{1}{\sup\Bigl\{t \mid \d\bigl(\alpha(t),\beta(t)\bigl)\leq C\Bigr\}}.$$

\begin{lem}
The function $\d_C$ is a metric on $\partial X$.
\end{lem}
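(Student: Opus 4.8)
The plan is to verify the three metric axioms for $\d_C$ by hand; the only ingredient beyond elementary manipulation is the fact that in a CAT(0) space the distance between two geodesics is convex (\cite{BridHae99}). \textbf{Well-definedness, finiteness, symmetry.} First I would note that $\d_C$ takes values in $[0,\infty)$. For unit-speed geodesic rays $\alpha,\beta$ based at $x_0$ we have $\d(\alpha(t),x_0)=\d(\beta(t),x_0)=t$, so $\d(\alpha(t),\beta(t))\le 2t\le C$ whenever $t\le C/2$; hence $S_{\alpha\beta}:=\{t\mid \d(\alpha(t),\beta(t))\le C\}$ always contains $[0,C/2]$, its supremum is at least $C/2>0$, and $\d_C(\alpha,\beta)\le 2/C$. (When $\sup S_{\alpha\beta}=+\infty$ we read $\d_C(\alpha,\beta)=0$.) Symmetry is immediate since $\d(\alpha(t),\beta(t))=\d(\beta(t),\alpha(t))$.

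\textbf{The structural lemma.} Next I would record the fact on which everything rests. Since $X$ is CAT(0), for any two geodesic rays $\alpha,\beta$ based at $x_0$ the function $g_{\alpha\beta}(t):=\d(\alpha(t),\beta(t))$ is convex on $[0,\infty)$ — apply the standard convexity of the distance between two affinely reparametrized geodesics on each interval $[0,R]$ — and $g_{\alpha\beta}(0)=0$. Convexity together with $g_{\alpha\beta}(0)=0$ forces $t\mapsto g_{\alpha\beta}(t)/t$ to be nondecreasing on $(0,\infty)$. Since $g_{\alpha\beta}$ is continuous, $S_{\alpha\beta}$ is a closed sublevel set of $g_{\alpha\beta}$ containing $0$, hence an interval $[0,T_{\alpha\beta}]$, and when $T_{\alpha\beta}<\infty$ the monotone-slope estimate yields, for $0\le t\le T_{\alpha\beta}$, the linear bound
$$\d(\alpha(t),\beta(t))\ \le\ \frac{g_{\alpha\beta}(T_{\alpha\beta})}{T_{\alpha\beta}}\,t\ \le\ \frac{C}{T_{\alpha\beta}}\,t .$$
For the identity of indiscernibles, if $\d_C(\alpha,\beta)=0$ then $T_{\alpha\beta}=+\infty$, i.e. $g_{\alpha\beta}\le C$ on $[0,\infty)$; were $g_{\alpha\beta}(t_1)>0$ for some $t_1$, the same monotone-slope estimate would force $g_{\alpha\beta}(t)\ge (g_{\alpha\beta}(t_1)/t_1)\,t\to\infty$, a contradiction, so $g_{\alpha\beta}\equiv 0$ and $\alpha=\beta$; the converse is clear.

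\textbf{Triangle inequality.} Finally, given geodesic rays $\alpha,\beta,\gamma$, set $a=\d_C(\alpha,\beta)$ and $b=\d_C(\beta,\gamma)$. If $a=0$ or $b=0$ the inequality is trivial by the previous step, so assume $a,b>0$; then $T_{\alpha\beta}=1/a$ and $T_{\beta\gamma}=1/b$ are finite, and since $a,b>0$ we have $1/(a+b)\le\min(1/a,1/b)$. Hence for every $t\le 1/(a+b)$ the linear bound applies to both pairs and
$$\d(\alpha(t),\gamma(t))\ \le\ \d(\alpha(t),\beta(t))+\d(\beta(t),\gamma(t))\ \le\ aCt+bCt\ =\ (a+b)Ct\ \le\ C .$$
Thus $1/(a+b)\in S_{\alpha\gamma}$, so $T_{\alpha\gamma}\ge 1/(a+b)$ and $\d_C(\alpha,\gamma)=1/T_{\alpha\gamma}\le a+b$, as desired. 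I expect the triangle inequality to be the only real obstacle: the ordinary triangle estimate alone gives merely $\d(\alpha(t),\gamma(t))\le 2C$ on $[0,\min(1/a,1/b)]$, which is useless, and the whole point is to upgrade this to the sharp linear bound $(a+b)Ct$ coming from the convexity of $g_{\alpha\beta}$ and $g_{\beta\gamma}$; everything else is bookkeeping.
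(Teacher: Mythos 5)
Your proof is correct and follows essentially the same route as the paper: derive the linear bound $\d(\alpha(t),\beta(t))\le Ct\,\d_C(\alpha,\beta)$ from CAT(0) convexity and $g(0)=0$, then apply the triangle inequality in $X$ at $t=\frac{1}{a+b}=\frac{mn}{m+n}$. You simply spell out the convexity argument and the well-definedness checks that the paper leaves implicit.
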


\begin{proof}
Notice that $\d_C(\alpha,\beta ) \leq \frac{1}{n}$ implies that $\d\bigl(\alpha(t),\beta(t)\bigl)\leq \frac{Ct}{n}$  for all $t\leq n$ by the CAT(0) condition.  Hence $\d_C(\alpha, \beta) = 0 $ if and only if $\alpha = \beta$.  Clearly $\d_C$ is symmetric and we are only left to show the triangle inequality holds for $\d_C$.

Suppose that $\d_C(\alpha,\beta) = \frac 1n$ and $\d_C(\beta, \gamma) = \frac 1m$ for some $m,n\in \mathbb R$.  For all $t\leq \min\{m,n\}$, we have $\d\left(\alpha(t),\gamma(t)\right) \leq Ct(\frac1n +\frac 1m)$ by the triangle inequality for the metric on $X$.  Hence $\d\left(\alpha(\frac{mn}{m+n}),\gamma(\frac{mn}{m+n})\right) \leq C$ which implies that
\[\d_C(\alpha, \gamma) \leq \frac{m+n}{mn} = \frac 1m +\frac 1n = \d_C(\alpha,\beta) +\d_C(\beta, \gamma) .\qedhere\]
\end{proof}

When the metric is understood, we will simply write $B_\epsilon(\alpha)$ for $B^{\d_C}_\epsilon (\alpha)$.  Notice that $\Ball_{\epsilon/(CR)}^{\d_C}(\alpha)\subset U(\alpha, R, \epsilon)  $ as long as $0 < \epsilon \le C$, and $U (\alpha, 1/\epsilon + C/2, C/2) \subset \Ball_{\epsilon}^{\d_C}(\alpha)$, which proves the following observation.

\begin{lem}
The metric $\d_C$ induces the visual topology on $\partial X$.
\end{lem}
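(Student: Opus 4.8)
The plan is to show that the two families generating the candidate topologies on $\partial X$—the metric balls $\Ball^{\d_C}_\epsilon(\alpha)$ and the visual basic sets $U(\alpha,R,\epsilon)$—mutually refine each other at every point, so that the identity map on $\partial X$ is a homeomorphism from the $\d_C$-metric topology to the visual topology $\partial_\infty X$. Concretely, it suffices to verify the two inclusions already flagged just above the statement: $\Ball^{\d_C}_{\epsilon/(CR)}(\alpha)\subset U(\alpha,R,\epsilon)$ whenever $0<\epsilon\le C$, and $U(\alpha,1/\epsilon+C/2,C/2)\subset\Ball^{\d_C}_\epsilon(\alpha)$. The only ingredients needed are the CAT(0) convexity estimate already used in the proof that $\d_C$ is a metric (namely $\d_C(\alpha,\beta)\le 1/n$ forces $\d(\alpha(t),\beta(t))\le Ct/n$ for $t\le n$) and the fact that all the rays are unit-speed and share the basepoint $x_0$.

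For the first inclusion, I would fix $\alpha$, $R>0$, and $0<\epsilon\le C$, take $\beta$ with $\d_C(\alpha,\beta)<\epsilon/(CR)$, and write $\d_C(\alpha,\beta)=1/n$, so $n>CR/\epsilon\ge R$ (using $\epsilon\le C$). Then the convexity estimate applies at $t=R\le n$ and gives $\d(\alpha(R),\beta(R))\le CR/n<\epsilon$, so $\d(\alpha(R),\im(\beta))<\epsilon$ and hence $\beta\in U(\alpha,R,\epsilon)$. This shows every visual basic neighborhood of $\alpha$ contains a $\d_C$-ball about $\alpha$.

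For the second inclusion, I would set $R=1/\epsilon+C/2$, take $\beta\in U(\alpha,R,C/2)$, and pick $s\ge 0$ with $\d(\alpha(R),\beta(s))<C/2$. Comparing distances to $x_0$ and using that both rays are unit-speed from $x_0$ gives $|R-s|\le\d(\alpha(R),\beta(s))<C/2$, hence $s>R-C/2=1/\epsilon$; then $\d(\alpha(s),\beta(s))\le|R-s|+\d(\alpha(R),\beta(s))<C$. Therefore the supremum defining $\d_C(\alpha,\beta)$ is at least $s>1/\epsilon$, so $\d_C(\alpha,\beta)<\epsilon$ and $\beta\in\Ball^{\d_C}_\epsilon(\alpha)$. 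This shows every $\d_C$-ball about $\alpha$ contains a visual basic neighborhood of $\alpha$.

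There is no real obstacle here; the argument is a short estimate, and the only thing to watch is the bookkeeping with the constants—in particular the role of the hypothesis $\epsilon\le C$ in the first inclusion, which is exactly what guarantees $n\ge R$ so that the convexity bound can be evaluated at $t=R$. Combining the two inclusions shows the $\d_C$-topology and the visual topology have a common neighborhood basis at each point, which is precisely the claim that $\d_C$ induces the visual topology on $\partial X$.
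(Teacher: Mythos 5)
Your proof is correct and follows exactly the route the paper takes: the paper's entire argument is the remark preceding the lemma, which asserts the two inclusions $\Ball^{\d_C}_{\epsilon/(CR)}(\alpha)\subset U(\alpha,R,\epsilon)$ (for $0<\epsilon\le C$) and $U(\alpha,1/\epsilon+C/2,C/2)\subset\Ball^{\d_C}_\epsilon(\alpha)$ without detail. You verify both inclusions with the correct convexity and triangle-inequality estimates, so your write-up is simply a fleshed-out version of the paper's proof.
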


\begin{defn}[Tits/Angle boundaries] For points $x,y,z$ in a CAT(0) space $X$, we will let $\overline\angle _x(y,z)$ denote the comparison angle at $x$ between $y$ and $z$. If $p_y: [0,a] \to X$ and $p_z:[0,b] \to X$ are the unique geodesics in $X$ from $x$ to $y$ and from $x$ to $z$ respectively, then the \emph{angle between $y$ and $z$ at $x$} is $\angle _x(y,z) = \lim\limits_{t\to0} \overline\angle _x\bigl(p_y(t),p_z(t)\bigr)$.

For $\alpha, \beta\in\partial X$, let
\begin{align*}
\angle (\alpha, \beta) & = \lim\limits_{t\to \infty} \overline\angle_{x_0} \bigl(\alpha(t),\beta(t)\bigr) = \sup \bigl\{\overline\angle_{x_0} \bigl(\alpha(t),\beta(t')\bigr) \mid t,t'>0\bigr\} \text{ and } \\
\angle_{x_0} (\alpha, \beta)  & = \lim\limits_{t\to 0} \overline\angle_{x_0}\bigl(\alpha(t),\beta(t)\bigr) = \inf \bigl\{\overline\angle_{x_0}\bigl(\alpha(t),\beta(t')\bigr) \mid t,t'>0\bigr\}.
\end{align*}

It is an exercise to show that $\angle(\cdot,\cdot)$ defines a locally geodesic metric on $\partial X$, which is called the  \emph{angle} metric.  We will denote $\partial X$ with this metric by $\partial_\angle X$.

The path metric induced by $\angle(\cdot,\cdot)$ is called the Tits metric and is denoted by $\d_T(\cdot,\cdot)$.  The \emph{Tits boundary of $X$} is $\partial X$ with this metric and will be denoted by $\partial_T X$.  Note that $\d_T$ is an extended metric in the sense that it maps into $[0,\infty]$.  The Tits distance between any two points in distinct path components of the angle boundary is infinity. We refer the interested reader to \cite[Chapter I. 1, II.9]{BridHae99} for complete details.

\end{defn}

Proofs of the following standard lemmas can be found in \cite[Proposition II.9.8]{BridHae99} and \cite[Proposition II.9.9]{BridHae99}.

\begin{lem}Let $X$ be a CAT(0) space.  For $\alpha,\beta\in\partial X$,
$$ 2\sin\left (\dfrac{\angle(\alpha,\beta)}{2}\right) = \lim\limits_{t\to\infty} \dfrac{\d\bigl(\alpha(t),\beta(t)\bigr)}{t}.$$
\end{lem}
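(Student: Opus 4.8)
The statement to prove relates the comparison angle $\angle(\alpha,\beta)$ between two boundary points to the asymptotic growth rate of the distance $\d(\alpha(t),\beta(t))$ along the corresponding rays, via the formula $2\sin(\angle(\alpha,\beta)/2) = \lim_{t\to\infty} \d(\alpha(t),\beta(t))/t$.

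The plan is to work in the Euclidean comparison triangles for the geodesic segments from $x_0$ and exploit monotonicity. First I would recall the key monotonicity fact underlying the CAT(0) condition: for a fixed pair of rays $\alpha,\beta$ based at $x_0$, the comparison angle $\overline\angle_{x_0}(\alpha(t),\beta(t'))$ is nondecreasing in each of $t$ and $t'$; this is precisely the content cited just above in the definition of the angle metric, where $\angle(\alpha,\beta)$ is realized as the supremum (equivalently the limit as $t\to\infty$) of these comparison angles. Then I would pass to the comparison triangle $\overline\triangle(x_0,\alpha(t),\beta(t))$ in $\mathbb{R}^2$: this is an isoceles triangle with two sides of length $t$ meeting at angle $\overline\angle_{x_0}(\alpha(t),\beta(t))$, so by the Euclidean law of cosines (or elementary trigonometry of the isoceles triangle) the base has length exactly $2t\sin\bigl(\tfrac12\overline\angle_{x_0}(\alpha(t),\beta(t))\bigr)$. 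By definition of the comparison triangle, this base length equals $\overline\d(\overline{\alpha(t)},\overline{\beta(t)}) = \d(\alpha(t),\beta(t))$. Hence for every $t>0$ we have the exact identity
\[
\frac{\d\bigl(\alpha(t),\beta(t)\bigr)}{t} = 2\sin\left(\frac{\overline\angle_{x_0}\bigl(\alpha(t),\beta(t)\bigr)}{2}\right).
\]

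With this identity in hand, the result follows by taking $t\to\infty$: the right-hand side converges because $\overline\angle_{x_0}(\alpha(t),\beta(t))$ increases monotonically to $\angle(\alpha,\beta)$ as $t\to\infty$ (again using the cited monotonicity), and $x\mapsto 2\sin(x/2)$ is continuous on $[0,\pi]$, so the limit of the right-hand side is $2\sin(\angle(\alpha,\beta)/2)$. Therefore the limit of the left-hand side exists and equals $2\sin(\angle(\alpha,\beta)/2)$, as claimed. One should note that since $\angle(\alpha,\beta) \le \pi$ always, the argument of the sine stays in $[0,\pi/2]$ where sine is increasing, so the convergence is genuinely monotone and there is no subtlety about which branch or whether the limit could overshoot.

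The only real point requiring care — and the step I would flag as the main obstacle, though it is standard — is justifying that $\overline\angle_{x_0}(\alpha(t),\beta(t))$ is monotone nondecreasing in $t$ along the diagonal $t = t'$. This is where the CAT(0) hypothesis enters essentially: it is a consequence of the fact that comparison angles do not decrease as one moves the endpoints outward along fixed geodesics, which in turn is Proposition II.9.8 / Alexandrov's lemma type reasoning in \cite{BridHae99}. Everything else is elementary Euclidean trigonometry applied inside the comparison triangle. Since the paper explicitly cites \cite[Proposition II.9.8]{BridHae99} and \cite[Proposition II.9.9]{BridHae99} as the sources for this lemma, I would simply invoke the monotonicity from there rather than reprove it, making the write-up quite short: state the exact isoceles-triangle identity for each $t$, then take the limit using monotone convergence of the comparison angle and continuity of $2\sin(\cdot/2)$.
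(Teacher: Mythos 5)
Your proof is correct, and it is essentially the standard argument: the exact isoceles comparison-triangle identity $\d(\alpha(t),\beta(t))/t = 2\sin\bigl(\tfrac12\overline\angle_{x_0}(\alpha(t),\beta(t))\bigr)$ combined with monotone convergence of the comparison angle to $\angle(\alpha,\beta)$. The paper itself gives no proof but defers to \cite[Proposition II.9.8]{BridHae99}, and your argument is precisely the one carried out there, so there is nothing to add.
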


\begin{lem}[Flat Sector Theorem]
  Let $X$ be a CAT(0) space.  If $\alpha,\beta\in\partial X$ such that $\angle(\alpha,\beta)<\pi$ and $\angle(\alpha,\beta) = \angle_{x_0}(\alpha,\beta)$, then the convex hull of the geodesic rays
$\alpha$ and $\beta$ is isometric to a sector in the Euclidean plane bounded by two rays which meet at an angle $\angle(\alpha,\beta)$.
\end{lem}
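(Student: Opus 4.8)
Write $\theta=\angle(\alpha,\beta)$; if $\theta=0$ then $\alpha=\beta$ and there is nothing to prove, so assume $0<\theta<\pi$. The plan is to show that every ``finite'' geodesic triangle $\triangle\bigl(x_0,\alpha(s),\beta(t)\bigr)$, $s,t>0$, bounds a flat Euclidean triangle with angle $\theta$ at $x_0$ and sides of lengths $s$ and $t$ issuing from $x_0$, and then to recognize the convex hull of $\im(\alpha)\cup\im(\beta)$ as the nested union of these triangles, glued into a Euclidean sector of angle $\theta$. The one nontrivial ingredient is the \emph{Flat Triangle Lemma}: in a CAT(0) space, if some vertex angle of a geodesic triangle equals the corresponding angle of its Euclidean comparison triangle, then the convex hull of the triangle is isometric to the solid comparison triangle (a standard consequence of the CAT(0) four-point inequality; see \cite[Chapter II.2]{BridHae99}).

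First I would check that $\overline\angle_{x_0}\bigl(\alpha(s),\beta(t)\bigr)=\theta$ for all $s,t>0$. By monotonicity of comparison angles in a CAT(0) space, $(s,t)\mapsto\overline\angle_{x_0}\bigl(\alpha(s),\beta(t)\bigr)$ is nondecreasing in each variable; by the formulas recorded in the definition of the Tits boundary its supremum is $\angle(\alpha,\beta)=\theta$ and its infimum is $\angle_{x_0}(\alpha,\beta)=\theta$, so it is constantly $\theta$. Moreover, since $[x_0,\alpha(s)]$ and $[x_0,\beta(t)]$ are initial subsegments of $\alpha$ and $\beta$, the Alexandrov angle between the two sides at $x_0$ in $\triangle\bigl(x_0,\alpha(s),\beta(t)\bigr)$ equals $\angle_{x_0}(\alpha,\beta)=\theta$ as well. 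Thus the Alexandrov angle at $x_0$ agrees with the comparison angle at $x_0$, and the Flat Triangle Lemma gives that $T_{s,t}:=\hull\{x_0,\alpha(s),\beta(t)\}$ is isometric to the Euclidean triangle with sides of lengths $s$ and $t$ meeting at angle $\theta$.

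Next I would assemble these. If $s\le s'$ and $t\le t'$ then $T_{s',t'}$ is convex and contains $x_0$, the point $\alpha(s)\in[x_0,\alpha(s')]$, and $\beta(t)$, hence contains $\hull\{x_0,\alpha(s),\beta(t)\}=T_{s,t}$; so $\{T_{s,t}\}_{s,t>0}$ is directed by inclusion. Its union is convex, contains $\im(\alpha)\cup\im(\beta)$, and is contained in any convex set that does, so $\bigcup_{s,t>0}T_{s,t}=\hull\bigl(\im(\alpha)\cup\im(\beta)\bigr)$. Fix the Euclidean sector $S_\theta\subset\mathbb{R}^2$ of angle $\theta$ with cone point $o$ and bounding rays $\rho_\alpha,\rho_\beta$; the Flat Triangle Lemma supplies isometries $f_{s,t}\colon T_{s,t}\to S_\theta$ with $f_{s,t}(x_0)=o$, $f_{s,t}(\alpha(s))\in\rho_\alpha$, $f_{s,t}(\beta(t))\in\rho_\beta$. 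For $s\le s'$, $t\le t'$ the maps $f_{s,t}$ and $f_{s',t'}|_{T_{s,t}}$ are isometric embeddings of $T_{s,t}$ into $\mathbb{R}^2$ agreeing on the three non-collinear points $x_0,\alpha(s),\beta(t)$ (this is where $0<\theta<\pi$ enters), hence they coincide. The $f_{s,t}$ therefore glue to a bijection $\hull\bigl(\im(\alpha)\cup\im(\beta)\bigr)\to S_\theta$, and it is an isometry because both the domain and $S_\theta$ are convex (so the induced and intrinsic metrics coincide) while the map restricts to an isometry on each $T_{s,t}$ and any two points lie in a common $T_{s,t}$. Since this isometry carries $\alpha$ onto $\rho_\alpha$ and $\beta$ onto $\rho_\beta$, which meet at angle $\theta=\angle(\alpha,\beta)$, the theorem follows.

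The genuinely substantive point is the Flat Triangle Lemma — the rigidity fact that equality in one angle forces flatness; if a self-contained treatment were wanted, this is the step to reprove. The rest is bookkeeping, apart from two routine caveats: one should dispatch the degenerate cases $\theta\in\{0,\pi\}$ separately ($\pi$ being excluded by hypothesis, $0$ trivial), and if $X$ is not assumed complete one should read ``convex hull'' as the smallest convex subset rather than its closure.
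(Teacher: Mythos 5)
Your proof is correct and is essentially the standard argument for this statement: the paper offers no proof of its own but cites \cite[Proposition II.9.9]{BridHae99}, whose proof likewise pins every comparison angle $\overline\angle_{x_0}\bigl(\alpha(s),\beta(t)\bigr)$ at $\theta$ by squeezing between $\angle_{x_0}$ and $\angle$, applies the Flat Triangle Lemma to each $\triangle\bigl(x_0,\alpha(s),\beta(t)\bigr)$, and glues the resulting flat triangles into a Euclidean sector. No gaps.
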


\section{Metric Limits}

As we will only be concerned about limits of spaces indexed by a totally ordered sets, we will present the definitions for inverse and direct limits in terms of spaces indexed by a totally ordered set.

Let $(J, \ge)$ be a totally ordered set.  A \emph{system of metric spaces} is a collection of metric spaces $\{(X_j,\d_j)\mid j\in J\}$ together with a collection of non-expansive maps $\{f_{i,j}: X_i\to X_j \mid \text{ for all }i\geq j\}$, called the \emph{connecting maps}, with the property that $f_{i,k} = f_{i,j}\circ f_{j,k}$ for all $i\geq j\geq k$.

\begin{defn}[Direct limits of metric spaces]
Let $(X_j, \d_j)$ be a metric space for each $j \in J$.
Write $\bigsqcup\limits_{j\in J} X_j$ for the disjoint union of $X_j$.
Define a pseudometric on $\bigsqcup\limits_{j\in J} X_j$ by $\underline\d(x,y) = \inf \bigl\{\d_k \bigl(f_{i,k}(x), f_{j,k}(y)\bigr) \mid i,j \ge k\bigr\}$ for $x \in X_i$ and $y \in X_j$.
We will say that $x,y \in \bigsqcup\limits_{j\in J} X_j$ are equivalent, denoted $x \sim y$, if $\underline\d(x,y) = 0$.
The \emph{direct limit} of the system of metric spaces $\{X_i, f_{i,j}\}$ is the set $\bigsqcup\limits_{j\in J} X_j / \sim$ of equivalence classes, together with the metric $\underline\d$.
We will denote the direct limit of $\{X_i, f_{i,j}\}$ by $\lim\limits_\rightarrow \{X_i, f_{i,j}\}$ or when convenient $\lim\limits_\rightarrow X_i$.

It is immediate that the canonical induced maps $f_i:X_i\to \lim\limits_{\rightarrow} \{X_i, f_{i,j}\}$ are all non-expansive.
\end{defn}

\begin{defn}[Inverse limits of metric spaces]Recall that an extended metric space is a set with a distance function that potentially takes on infinite values but satisfies the rest of the properties of a metric.

If $(X_j, \d_j)$ is a metric space for each $j \in J$, then the metric product of $X_j$, written $\prod\limits_{j\in J} X_j$, is the extended metric space consisting of functions $x: J\to \bigcup\limits_{j\in J} X_j$ such that $x(j)\in X_j$ for all $j \in J$, with metric $\overline\d(x,y) = \sup_{j \in J} \d_j\bigl(x(j),y(j)\bigr)$.
The \emph{inverse limit} of a system of metric spaces is the metric subspace of $\prod\limits_{j\in J} X_j$ which consists of those functions $x$ for which $x(j) =f_{i,j}\bigl(x(i)\bigr)$ for every $i$ and $j$ such that $i \ge j$.  We will denote the inverse limit by $\lim\limits_{\leftarrow} \{X_i, f_{i,j}\}$ or when convenient by $\lim\limits_{\leftarrow} X_i$.

The inverse limit of complete metric spaces is complete; the direct limit need not be.
\end{defn}

\begin{rmk}
  We will show that the inverse limit of asymptotic cones is a metric space, with all points finite distance because the connecting maps all preserve distance to the basepoint.
\end{rmk}

\begin{rmk}
While the topological inverse/direct limit of a system of metric spaces is also a topological space, the metric on the metric inverse/direct limit need not always induce the topology of the topological inverse/direct limit.
\end{rmk}

\begin{exa}
Fix a basepoint $p$ in a complete CAT(0) space $X$.  The metric inverse limit of spheres $S_r(p)$ (with connecting maps coming from geodesic projection, and metric $\overline\angle_p$) is the Tits boundary 
$\partial_T X$.  On the other hand, the topological inverse limit of this system is the visual boundary $\partial_\infty X$.
Meanwhile, metric direct limit of this system of spheres is the space of directions\footnote{Indeed, $\Sigma_p X$ is formed by taking the set of germs of geodesic segments based at $p$, with pseudometric $\angle_p$.  Then $\Sigma_p X$ is the metric space of equivalence classes, where geodesic germs are considered equivalent if $\angle_p = 0$.}, $\Sigma_{p} X$, at $p$.  Its completion (if $X$ is proper and geodesically complete it is already complete) is called the link at $p$.

Now consider the system of closed metric balls $\overline\Ball_{r}(p)$ (with connecting maps all geodesic projection).  The metric inverse limit consists an isometric copy of $X$, along with the points of $\partial X$, all infinite distance from everything else in the space.  The topological inverse limit, on the other hand, is the visual compactification $\overline X = X \cup \partial_\infty X$ of $X$ \cite[p. 263]{BridHae99}.  Meanwhile, the metric direct limit is a single point.


If we use rescaled copies $\frac{1}{r} X$ of $X$ (using geodesic projection to rescale distances to $p$), the metric inverse limit is $\cone(\partial_T X)$, the Euclidean cone over the Tits boundary; the metric direct limit is the tangent cone $\cone(\Sigma_p X)$ at $p$, the Euclidean cone over the space of directions.  The system of asymptotic cones we will consider resembles this last system, but the details are more involved.
\end{exa}

\section{Direct limits and inverse limits of asymptotic cones}

\subsection*{Ultrafilters and Asymptotic cones}

\begin{defn}
A \emph{(non-principal) ultrafilter} $\omega$ on a set $S$ is a finitely additive probability measure on the power set of $S$ with values in $\{0,1\}$ such that $\omega(A) = 0$ for all finite subsets $A\subset S$. We will say that $A\subset S$ is \emph{$\omega$-large} if $\omega(A) = 1$.  A property $P$ holds \emph{$\omega$-almost surely} if it holds for some $\omega$-large subset of $S$.

For $\{a_s\mid s\in S\}\subset X$, a subset of a topological space $X$ indexed by $S$, we will say that the \emph{ultralimit of $a_s$ is $x$}, written $\lim^\omega a_s = x$, if for every open neighborhood $U$ of $x$ the set $\{s \mid a_s \in U\}$ is $\omega$-large.  It is an exercise to show that every $S$-indexed subset of a compact space has a unique ultralimit and that ultralimits satisfy the standard properties of limits.
If $\{a_s\} \subset X$ is $S$-indexed but has no ultralimit in $X$, we will say $a_s$ is \emph{$\omega$-divergent}.
\end{defn}

\begin{defn}[Ultraproducts]
Let $(X_n, \d_n)$ be a sequence of metric spaces, $\omega$ an ultrafilter on $\mathbb N$, and $e=(e_n)\in \prod\limits_{n=1}^\infty X_n$.   The \emph{ultraproduct} of $(X_n, \d_n)$ is $$\prod {}^{{}^\omega}_{{}_e} X_n = \Bigl\{ (x_n)\in\prod_{n=1}^{\infty} X_n \ \Bigl | \ \text{for each $(x_n)$, $\d_n(x_n, e_n) $ is uniformly bounded}\Bigr\}/\sim$$  where $(x_n)\sim (y_n)$ if $\lim^\omega \d_n(x_n, y_n) = 0$.  The ultraproduct has metric
\[\d\bigl((x_n), (y_n)\bigr) = \lim\nolimits^\omega \d_n (x_n, y_n).\]

In general, the ultraproduct depends on both $e$ and $\omega$.  The sequence $e$ will be called the \emph{observation sequence} for the ultraproduct.  

We will use the simplified notation  $\partial_\angle^\omega X=  \prod^\omega_e \partial_\angle X$  and $\partial_T^\omega X =  \prod^\omega_e \partial_T X$.  Since $\partial_\angle X$ is bounded, $\partial_\angle ^\omega X$ is independent of the chosen basepoint but in the case of $\partial_T^\omega X$, there is an implied but unspecified choice of basepoint.

Let $\omega$ be an ultrafilter on $\mathbb N$ and $d=(d_n)$, an $\omega$-divergent sequence of positive real numbers (called a \emph{scaling sequence}). An \emph{asymptotic cone of $X$} is $\prod {}^{{}^\omega}_{{}_e} (X, \d/d_n)$ and will be denoted by  $\conn\bigl(X, e, d\bigr)$.
\end{defn}

\begin{defn}[Euclidean cones]\label{cones}  If $X$ is a metric space, let $\cone(X) =  (\mb R^+\times X) / \sim$ where $(0,x)\sim (0, x')$ for all $x,x'\in X$.  When convenient, we will denote the equivalence class of $(t,x)$ in $\cone(X)$ by $tx$.

We can endow $\cone(X)$ with a metric by $$\d^2(tx, t'x') = t^2 +(t')^2 -2tt'\cos\Bigl(\max\bigl\{\pi, \d(x,x')\bigr\}\Bigr).$$

Write  $\bigl(\cone(X)\bigr)^\omega$ for $\prod^\omega_e \cone(X)$ with $e= (0x)$.

\end{defn}

Notice that if $X$ is a complete CAT(0) space, then $\conn\bigl(X, e, d\bigr)$, $\cone(\partial_\angle X)$, $\cone(\partial_T X)$, $\cone(\partial^\omega_\angle X)$, $\cone(\partial^\omega_T X)$, $\bigl(\cone(\partial_\angle X)\bigr)^\omega$, and $\bigl(\cone(\partial_T X)\bigr)^\omega$ are all complete CAT(0) spaces.

\begin{prop}
Let $X$ be a complete CAT(0) space. The identity map from $\partial_\angle X $ to $\partial_TX$ induces an isometry from  $\cone(\partial_\angle X)$ to $\cone(\partial_T X)$. The natural map from $\cone(\partial^\omega_\angle X)$ to $\bigl(\cone (\partial_\angle X)\bigr)^\omega$ is an isometry.  As well, $\partial^\omega_T X$ is homeomorphic to a path component of $\partial^\omega_\angle X$.  \end{prop}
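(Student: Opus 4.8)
The three assertions are essentially independent; the first two are short and the third is the substantive one. For the first, I would observe that by the cone-metric formula in Definition \ref{cones} the metric on $\cone(Y)$ depends on the metric of $Y$ only through its truncation $\min\{\pi,\cdot\}$, so it is enough to check $\min\{\pi,\d_T(\alpha,\beta)\}=\angle(\alpha,\beta)$ for all $\alpha,\beta\in\partial X$. This in turn follows from two standard facts: $\angle\le\d_T$ (a length metric dominates the metric it is built from) together with $\angle\le\pi$, and $\d_T(\alpha,\beta)=\angle(\alpha,\beta)$ whenever $\angle(\alpha,\beta)<\pi$ \cite[Ch.~II.9]{BridHae99}. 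Hence $tx\mapsto tx$ is a distance-preserving bijection $\cone(\partial_\angle X)\to\cone(\partial_T X)$. For the second, the natural map sends $(t,[(\alpha_n)])\in\cone(\partial_\angle^\omega X)$ to the class of the constant-radius sequence $(t\alpha_n)$ in $\bigl(\cone(\partial_\angle X)\bigr)^\omega$. That it is well-defined and distance-preserving -- and therefore injective -- follows because ultralimits commute with the continuous map $s\mapsto\sqrt{t^2+(t')^2-2tt'\cos(\min\{\pi,s\})}$ and with $\min\{\pi,\cdot\}$; surjectivity holds because an arbitrary radius-bounded sequence $(t_n\alpha_n)$ is equivalent to $(t\alpha_n)$ with $t=\lim^\omega t_n$, the case $t=0$ landing on the cone point.

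For the third assertion, fix the observation sequence $e=(\xi_n)$ underlying $\partial_T^\omega X$ and define $\Phi\colon\partial_T^\omega X\to\partial_\angle^\omega X$ by $[(\alpha_n)]_T\mapsto[(\alpha_n)]_\angle$; this is well-defined because $\angle\le\d_T$. The plan is to prove that $\im\Phi$ is exactly the path component of $\partial_\angle^\omega X$ containing $\Phi(e)$, and that $\Phi$ is a homeomorphism onto it. The key local statement is that $\angle(\Phi x,\Phi y)<\pi$ forces $\d_T(x,y)=\angle(\Phi x,\Phi y)$: writing $x=[(\alpha_n)]$, $y=[(\beta_n)]$, if $\lim^\omega\angle(\alpha_n,\beta_n)=s<\pi$ then $\angle(\alpha_n,\beta_n)<\pi$ $\omega$-almost surely, so $\d_T(\alpha_n,\beta_n)=\angle(\alpha_n,\beta_n)$ there, and taking ultralimits gives $\d_T(x,y)=s$. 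In particular $\Phi$ is injective and both $\Phi$ and $\Phi^{-1}$ are continuous.

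Next I would show $\im\Phi$ is clopen in $\partial_\angle^\omega X$. It is open: given $x=[(\alpha_n)]\in\partial_T^\omega X$ with $\d_T(\alpha_n,\xi_n)\le M$ and $v=[(\beta_n)]_\angle$ with $\angle(\Phi x,v)<\pi$, one has $\angle(\alpha_n,\beta_n)<\pi$ $\omega$-almost surely, hence there $\d_T(\beta_n,\alpha_n)=\angle(\alpha_n,\beta_n)<\pi$ and so $\d_T(\beta_n,\xi_n)<\pi+M$; after altering $\beta_n$ on an $\omega$-null set so that this bound holds everywhere we get $v=\Phi(y)$ with $y=[(\beta_n)]_T\in\partial_T^\omega X$, so the whole open $\pi$-ball about $\Phi x$ lies in $\im\Phi$. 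Closedness is then immediate, since a point outside $\im\Phi$ must be at distance $\ge\pi$ from all of $\im\Phi$. Finally, $\partial_T^\omega X$ is path-connected, so $\im\Phi$ is too: for $x=[(\alpha_n)]$ and $y=[(\beta_n)]$ with $\d_T(x,y)=L<\infty$, choose $\omega$-almost surely a path in $\partial_T X$ from $\alpha_n$ to $\beta_n$ of length at most $L+2$ (possible because $\d_T$ restricts to a genuine length metric on each path component of $\partial_T X$), reparameterize it proportionally to arclength, and cut into $k=\lceil(L+2)/\pi\rceil$ equal pieces; the subdivision points assemble coordinatewise over $\omega$ into a chain $x=z^0,\dots,z^k=y$ in $\partial_T^\omega X$ with $\d_T(z^i,z^{i+1})<\pi$, and consecutive $z^i$ are joined by geodesics of $\partial_T^\omega X$ gotten by selecting over $\omega$ the geodesics of $\partial_T X$ between points at distance $<\pi$ (these exist because $\partial_T X$ is a complete CAT(1) space \cite[Ch.~II.9]{BridHae99}). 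Since $\im\Phi$ is clopen and path-connected it is a single path component of $\partial_\angle^\omega X$, and by the local statement $\Phi$ is a homeomorphism onto it.

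I expect the main obstacle to be the ultraproduct bookkeeping in the third assertion: checking that geodesics and near-geodesics of $\partial_T X$ really do assemble coordinatewise over $\omega$ into genuine geodesics and paths of the ultraproduct -- so that, like $\partial_T X$, both $\partial_T^\omega X$ and $\partial_\angle^\omega X$ are geodesic for pairs at distance $<\pi$ -- and that the number of chain links stays bounded in terms of $L$ alone, so the connecting chain actually lives in the ultraproduct. The external inputs are only the standard identity $\d_T=\angle$ below angle $\pi$ and the existence (and uniqueness) of geodesics between points at distance $<\pi$ in the complete CAT(1) space $\partial_T X$.
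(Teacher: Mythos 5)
Your argument is correct, and for the first two assertions it matches the paper: the paper cites that $\partial_\angle X$ is CAT(1), so the identity is an isometry on balls of radius $\pi$ (equivalently, your $\min\{\pi,\d_T\}=\angle$; note you have also silently corrected the $\max$ in Definition \ref{cones}, which should be $\min$), and it uses the same map $F\bigl(t,(\alpha_n)\bigr)=\bigl((t,\alpha_n)\bigr)$, leaving the verification as an exercise exactly as you sketch it. For the third assertion the forward inclusion is also the same in both treatments: take coordinatewise Tits paths of uniformly bounded length and pass to the $\omega$-limit to get a path in $\partial_\angle^\omega X$ (your subdivision into sub-$\pi$ pieces is not needed for this step, since the ultralimit of uniformly Lipschitz paths is already a path, but it does no harm). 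Where you genuinely diverge is the reverse inclusion. The paper takes an arbitrary point $(\gamma_n)$ in the path component of $e$, invokes local geodesicity of $\partial_\angle^\omega X$ to replace the connecting path by a rectifiable one, and then bounds $\d_T(\gamma_n,e_n)$ $\omega$-almost surely by the length of that path. You instead show $\im\Phi$ is clopen --- open via the observation that the entire open $\pi$-ball about any image point consists of image points, closed as an immediate consequence --- and conclude by the standard fact that a clopen path-connected set containing a point is its path component. Your route buys something real: it avoids the paper's rectification step (``locally geodesic implies a rectifiable connecting path exists''), which is true but requires a compactness-and-subdivision argument the paper does not supply; the paper's route, in exchange, is shorter and directly produces the uniform bound on $\d_T(\gamma_n,e_n)$ that witnesses membership in $\partial_T^\omega X$. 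You also make explicit, via the local isometry below angle $\pi$, that the identity is a homeomorphism onto the component, which the paper asserts but leaves implicit.
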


Hence we will often identify $\cone(\partial_\angle X)$ with $\cone(\partial_T X)$.
\begin{proof}The angle boundary, $\partial_\angle X$, is a CAT(1) space, see \cite[Theorem II.9.13]{BridHae99}.  Hence the identity map from $\partial_\angle X$ to $\partial_T X$ is a isometry when restricted to an open ball of radius $\pi$.  It is then immediate that $\cone(\partial_\angle X)$ is canonically isometric to $\cone(\partial_T X)$.

Let $F: \cone(\partial^\omega_\angle X)\to\bigl(\cone (\partial_\angle X)\bigr)^\omega$ by $F\bigl(t,(\alpha_n)\bigr) = \bigl((t,\alpha_n)\bigr)$.  It is a straightforward exercise to verify that $F$ is an isometry.

Recall that the ultraproduct $\partial^\omega_T X$ depends on a fixed observation sequence $e= (e_n)$ in $\partial_T X$ and is the set of sequences of geodesic rays $(\alpha_n)$ such that $\d_T(\alpha_n, e_n)$ is  uniformly bounded $\omega$-almost surely.  Thus for $(\alpha_n),(\beta_n)\in \partial^\omega_T X$, there is some $M$ such that $\d_T(\alpha_n, \beta_n)< M$ $\omega$-almost surely and the $\omega$-limit of the geodesics in $\partial_T X$ from $\alpha_n$ to $\beta_n$ gives a path from $(\alpha_n)$ to $(\beta_n)$ in $\partial^\omega_\angle X$.  Therefore the identity map from $\partial_T^\omega X$ to $\partial_\angle^\omega X$, takes $\partial_T^\omega X$ into a path component of $\partial_\angle^\omega X$. Suppose that $(\gamma_n)$ is any element of $\partial_\angle^\omega X$ contained in the same path component as $e$.  Since $\partial_\angle^\omega X$ is locally geodesic, there exists a rectifiable path from $(\gamma_n)$ to $e$.  Thus $\d_T(\gamma_n, e_n)$ is uniformly bounded $\omega$-almost surely  by the length of this rectifiable path, which implies that $(\gamma_n)\in \partial_T^\omega X$.  Therefore $\partial_T^\omega X$ is a path component of $\partial_\angle^\omega X$.
\end{proof}

\begin{cor}

For a complete CAT(0) space $X$  the following are equivalent.

\begin{enumerate}[(i)]
  \item The natural embedding of $\cone(\partial^\omega_T X)$ into $\cone(\partial^\omega_\angle X)$ is surjective.
  \item The diameter of $\partial_T X$ is finite.
\end{enumerate}

\end{cor}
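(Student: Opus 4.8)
The plan is to reduce (i) to the set-theoretic statement $\partial^\omega_T X=\partial^\omega_\angle X$, and then to identify that statement with (ii). The first thing I would check is that the ``natural embedding'' really is induced by an inclusion of underlying sets. Given two sequences of based rays $(\alpha_n),(\beta_n)$, one has $\lim^\omega\d_T(\alpha_n,\beta_n)=0$ if and only if $\lim^\omega\angle(\alpha_n,\beta_n)=0$: the forward direction is immediate from $0\le\angle\le\d_T$, and conversely $\lim^\omega\angle(\alpha_n,\beta_n)=0$ forces $\angle(\alpha_n,\beta_n)<\pi$ $\omega$-almost surely, so, since $\partial_\angle X$ is CAT(1), $\d_T(\alpha_n,\beta_n)=\angle(\alpha_n,\beta_n)$ $\omega$-almost surely (the same fact used in the proof of the preceding Proposition), whence $\lim^\omega\d_T(\alpha_n,\beta_n)=0$. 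Thus the equivalence relations defining the two ultraproducts agree on sequences, so $\partial^\omega_T X$ is literally a subset of $\partial^\omega_\angle X$ (and $\partial^\omega_\angle X$ consists of \emph{all} sequences of rays modulo this relation, since $\diam\partial_\angle X\le\pi$ makes the uniform-boundedness requirement vacuous there). The induced map on cones then sends the cone point to itself and $t\xi\mapsto t\xi$, so it is surjective precisely when $\partial^\omega_T X=\partial^\omega_\angle X$. (The same CAT(1) dichotomy gives $\min\{\pi,\lim^\omega\d_T\}=\min\{\pi,\lim^\omega\angle\}$, so this map is in fact an isometric embedding of the cones — the metric on both cones only ever sees the angle capped at $\pi$ — but that is not needed for the equivalence.)

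For $(ii)\Rightarrow(i)$ the argument is short: if $D:=\diam\partial_T X<\infty$, then for whatever observation sequence $(e_n)\subset\partial_T X$ is implicitly fixed, every sequence of rays $(\alpha_n)$ satisfies $\d_T(\alpha_n,e_n)\le D$ for all $n$, hence represents a point of $\partial^\omega_T X$ by definition. So $\partial^\omega_\angle X\subseteq\partial^\omega_T X$, and combined with the reverse inclusion from the first paragraph this gives $\partial^\omega_T X=\partial^\omega_\angle X$ and hence surjectivity. (One could instead invoke the preceding Proposition, noting $\partial_T X$ connected makes $\partial_\angle X$ connected and in fact $\partial^\omega_\angle X$ path connected, so it is a single path component, which the Proposition identifies with $\partial^\omega_T X$; but the direct computation above is cleaner and avoids any geodesic-existence issue.)

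For $(i)\Rightarrow(ii)$ I would argue the contrapositive. Assume $\diam\partial_T X=\infty$. Then for \emph{every} $n$ we must have $\sup\{\d_T(\gamma,e_n)\mid\gamma\in\partial X\}=\infty$, since otherwise $\partial_T X$ would be contained in a $\d_T$-ball about $e_n$ and thus have finite diameter. Choose for each $n$ a ray $\gamma_n$ with $\d_T(\gamma_n,e_n)>n$. The sequence $(\gamma_n)$ represents a point of $\partial^\omega_\angle X$, but it does not represent a point of $\partial^\omega_T X$: for every $M$ the set $\{n\mid\d_T(\gamma_n,e_n)>M\}$ contains the cofinite set $\{n\mid n>M\}$ and is therefore $\omega$-large because $\omega$ is non-principal, so $\d_T(\gamma_n,e_n)$ is not uniformly bounded $\omega$-almost surely. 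Since the two equivalence relations agree, $(\gamma_n)$ is a point of $\partial^\omega_\angle X$ that is genuinely outside the subset $\partial^\omega_T X$; consequently the point $1\cdot(\gamma_n)$ of $\cone(\partial^\omega_\angle X)$ lies outside the image of $\cone(\partial^\omega_T X)$ (that image being the cone point together with $\{t\xi\mid t>0,\ \xi\in\partial^\omega_T X\}$), and the embedding is not surjective.

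The routine content here is the two inclusions of sets. The step I expect to require the most care is the non-surjectivity direction, for two reasons: one must invoke non-principality of $\omega$ in exactly the right way to see that the divergence of $\d_T(\gamma_n,e_n)$ actually obstructs membership in the ultraproduct, and — more subtly — $\partial^\omega_T X$ depends on the observation sequence, which is only implicitly specified; this is why I extract $\sup_\gamma\d_T(\gamma,e_n)=\infty$ for each \emph{individual} $n$, so that the construction of $(\gamma_n)$ works uniformly for any admissible choice of $(e_n)$.
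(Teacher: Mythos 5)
Your argument is correct. The paper offers no written proof---the corollary is meant to fall out of the preceding Proposition, which identifies $\partial^\omega_T X$ with a path component of $\partial^\omega_\angle X$: on that route, (ii) gives uniformly bounded Tits distances, hence rectifiable paths (ultralimits of Tits geodesics) joining any two points of $\partial^\omega_\angle X$, so there is only one path component and the embedding is onto; conversely the failure of (ii) produces a sequence $(\gamma_n)$ with $\d_T(\gamma_n,e_n)\to\infty$ lying outside the distinguished component. You instead work at the level of underlying sets, showing the two ultraproduct equivalence relations coincide (via $\angle\le\d_T$ together with the CAT(1) fact that $\d_T=\angle$ below $\pi$) and reducing surjectivity to the set equality $\partial^\omega_T X=\partial^\omega_\angle X$; your non-surjectivity witness is the same sequence the Proposition route would use. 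The two arguments are essentially the same computation packaged differently; yours avoids any appeal to local geodesy of $\partial^\omega_\angle X$, which is a modest gain in self-containment, and your handling of the unspecified observation sequence (extracting $\sup_\gamma \d_T(\gamma,e_n)=\infty$ for each individual $n$) is a point the paper leaves entirely implicit. One step worth unpacking: in the $(i)\Rightarrow(ii)$ direction you show the particular sequence $(\gamma_n)$ is inadmissible for $\partial^\omega_T X$, but surjectivity fails only if its entire class in $\partial^\omega_\angle X$ contains no admissible representative. This does follow from what you proved---if $\lim^\omega\angle(\gamma_n,\gamma_n')=0$ then $\d_T(\gamma_n,\gamma_n')=\angle(\gamma_n,\gamma_n')<\pi$ $\omega$-almost surely, so bounded Tits distance to $e$ is an invariant of the $\partial^\omega_\angle X$-class---but the clause ``since the two equivalence relations agree'' is carrying that weight and should be spelled out.
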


\begin{defn}\label{embedding}
Fix $x_0 \in X$.
For  a fixed scaling sequence $d = (d_n)$ and a non-principal ultrafilter $\omega$, we will define $\Psi^\omega_d: \cone(\partial^\omega_\angle X) \to \conn\bigl(X, (x_0), d\bigr)$ by $$\Psi^\omega_d \bigl(t, (\alpha_n)\bigr) = \bigl(\alpha_n(td_n)\bigr).$$
Similarly we define $\Psi_d: \cone(\partial_\angle X) \to \conn\bigl(X, (x_0), d\bigr)$ by $$\Psi_d \bigl(t, \alpha\bigr) = \bigl(\alpha(td_n)\bigr).$$
Notice that for a constant sequence of geodesic, we have $\Psi_d^\omega \bigl(t,(\alpha)\bigr) = \Psi_d(t,\alpha)$.  Thus when convenient, we will identify $\cone(\partial_\angle X)$ with its canonical diagonal embedding in $\cone(\partial^\omega_\angle X)$ and consider $\Psi_d$ as a restriction of $\Psi_d^\omega$.
\end{defn}

For fixed $(\alpha_n)\in \partial_\angle^\omega X$ and $\alpha\in \partial_\angle X$, the maps $t \mapsto \Psi^\omega_d\bigl(t,(\alpha_n)\bigr)$ and $t \mapsto \Psi_d(t,\alpha)$ are geodesics rays in $\conn\bigl(X, (x_0), d\bigr)$ based at $(x_0)$.
Thus we have the following induced maps of boundaries: \\
$\overline \Psi^\omega_d :\partial^\omega_\angle X \to \partial_\angle\Bigl(\conn\bigl(X, (x_0), d\bigr)\Bigr)$ by  $$\overline\Psi^\omega_d(\alpha_n)(t) = \Psi^\omega_d \bigl(t, (\alpha_n)\bigr)$$
and $\overline \Psi_d :\partial_T X \to \partial_T\Bigl(\conn\bigl(X, (x_0), d\bigr)\Bigr)$ by  $$\overline\Psi_d(\alpha)(t) = \Psi_d \bigl(t, \alpha\bigr).$$

Recall that a metric space $X$ is \emph{cocompact} if there exists a compact set $C$ such that the translates of $C$ by the full isometry group of $X$ cover $X$.

\begin{prop}\label{contracting}The maps $\Psi^\omega_d$ and $\overline\Psi^\omega_d$ are non-expansive.  If $X$ is proper and cocompact, then $\Psi^\omega_d$ and $\overline\Psi^\omega_d$ are surjective.
\end{prop}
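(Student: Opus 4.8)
The plan is to split the statement into the two non-expansiveness claims, which I would settle by comparison geometry, and surjectivity, which I would derive from almost geodesic completeness of proper cocompact CAT(0) spaces. For $\Psi^\omega_d$, given $\bigl(t,(\alpha_n)\bigr)$ and $\bigl(s,(\beta_n)\bigr)$ in $\cone(\partial_\angle^\omega X)$, I would apply the Euclidean law of cosines in the comparison triangle $\overline\triangle\bigl(x_0,\alpha_n(td_n),\beta_n(sd_n)\bigr)$, giving
\[
\d\bigl(\alpha_n(td_n),\beta_n(sd_n)\bigr)^2=(td_n)^2+(sd_n)^2-2(td_n)(sd_n)\cos\overline\angle_{x_0}\bigl(\alpha_n(td_n),\beta_n(sd_n)\bigr),
\]
and then bound $\overline\angle_{x_0}\bigl(\alpha_n(td_n),\beta_n(sd_n)\bigr)\le\angle(\alpha_n,\beta_n)\le\pi$ using the supremum formula for $\angle$. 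Since $\theta\mapsto(td_n)^2+(sd_n)^2-2(td_n)(sd_n)\cos\theta$ is non-decreasing on $[0,\pi]$, dividing by $d_n^2$ and taking $\lim^\omega$ (continuity of $\cos$, together with the fact that $\ell:=\lim^\omega\angle(\alpha_n,\beta_n)$ is the distance between $(\alpha_n)$ and $(\beta_n)$ in $\partial_\angle^\omega X$ and satisfies $\ell\le\pi$) yields
\[
\d\bigl(\Psi^\omega_d(t,(\alpha_n)),\Psi^\omega_d(s,(\beta_n))\bigr)^2\le t^2+s^2-2ts\cos\ell,
\]
whose right-hand side is exactly the squared distance in $\cone(\partial_\angle^\omega X)$; the case $t=s$ also gives that $\Psi^\omega_d$ is well defined on equivalence classes.

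For $\overline\Psi^\omega_d$, observe that $\gamma:=\overline\Psi^\omega_d(\alpha_n)$ and $\delta:=\overline\Psi^\omega_d(\beta_n)$ are geodesic rays in $\conn\bigl(X,(x_0),d\bigr)$ based at $(x_0)$, so by the lemma relating the angle metric to asymptotic slope, $2\sin\bigl(\angle(\gamma,\delta)/2\bigr)=\lim_{u\to\infty}\d(\gamma(u),\delta(u))/u$. For each fixed $u$ one has $\d(\gamma(u),\delta(u))=\lim^\omega\d\bigl(\alpha_n(ud_n),\beta_n(ud_n)\bigr)/d_n$, and since $r\mapsto\d(\alpha_n(r),\beta_n(r))/r=2\sin\bigl(\overline\angle_{x_0}(\alpha_n(r),\beta_n(r))/2\bigr)$ is non-decreasing with limit $2\sin(\angle(\alpha_n,\beta_n)/2)$, this gives $\d(\gamma(u),\delta(u))/u\le\lim^\omega 2\sin(\angle(\alpha_n,\beta_n)/2)=2\sin(\ell/2)$. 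Letting $u\to\infty$ and using that $\sin$ is increasing on $[0,\pi/2]$ yields $\angle(\gamma,\delta)\le\ell$, which is the desired bound.

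For surjectivity I would first reduce the claim about $\overline\Psi^\omega_d$ to the one about $\Psi^\omega_d$: given a geodesic ray $\xi$ from $(x_0)$ in $\conn\bigl(X,(x_0),d\bigr)$, write $\xi(k)=\Psi^\omega_d\bigl(k,(\alpha^{(k)}_n)\bigr)$ for each $k\in\mathbb N$ (the parameter is forced to be $k$ because $\Psi^\omega_d$ preserves distance to the basepoint), note that each ray $\overline\Psi^\omega_d(\alpha^{(k)}_n)$ agrees with $\xi$ on $[0,k]$ by uniqueness of geodesics, and then run a routine ultrafilter diagonalization to obtain a single $(\alpha_n)$ with $\overline\Psi^\omega_d(\alpha_n)=\xi$. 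For $\Psi^\omega_d$ itself, let $(y_n)\in\conn\bigl(X,(x_0),d\bigr)$ and put $t=\lim^\omega\d(x_0,y_n)/d_n$. The key input is that a proper cocompact CAT(0) space is \emph{almost geodesically complete}: there is a constant $D=D(X)$ such that every geodesic segment $[x_0,y]$ lies in the $D$-neighborhood of some geodesic ray based at $x_0$. Applying this to each $[x_0,y_n]$ produces rays $\alpha_n$ from $x_0$ with $[x_0,y_n]\subseteq$ ($D$-neighborhood of $\im\alpha_n$); writing $r_n=\d(x_0,y_n)$ and letting $\alpha_n(s_n)$ be the nearest point of $\alpha_n$ to $y_n$, one gets $|s_n-r_n|\le D$ and hence $\d(\alpha_n(td_n),y_n)\le|td_n-r_n|+2D$, so $\lim^\omega\d(\alpha_n(td_n),y_n)/d_n=0$ and $\Psi^\omega_d\bigl(t,(\alpha_n)\bigr)=(y_n)$.

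The hard part is the almost geodesic completeness: the rest is bookkeeping, but this is the step that genuinely uses cocompactness, and it is not automatic, since a proper cocompact CAT(0) space need not be geodesically complete (for instance a line with uniformly bounded "teeth" attached). My plan for this lemma is a compactness argument: using cocompactness, translate the far endpoint $y$ of $[x_0,y]$ into a fixed compact set so that $[x_0,y]$ becomes a long segment with an endpoint in that set; extract a limiting geodesic ray using properness and Arzel\`a--Ascoli; and transport it back, the delicate point being to make the error constant $D$ uniform in $y$. I would present this as a separate lemma, or cite it, since this property of proper cocompact CAT(0) spaces is known.
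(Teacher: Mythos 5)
Your proposal is correct and follows essentially the same route as the paper: non-expansiveness of $\Psi^\omega_d$ and $\overline\Psi^\omega_d$ via comparison angles and their monotonicity, surjectivity of $\Psi^\omega_d$ from almost geodesic completeness of proper cocompact CAT(0) spaces (which the paper likewise imports, citing Geoghegan--Ontaneda), and surjectivity of $\overline\Psi^\omega_d$ by hitting a given ray at integer times and diagonalizing over nested $\omega$-large sets. The one caveat is your parenthetical sketch of proving almost geodesic completeness by translating into a compact set and applying Arzel\`a--Ascoli: that alone cannot work, since the difficulty is extending $[x_0,y]$ \emph{past} $y$ without geodesic completeness, and Ontaneda's actual proof is substantially harder; but as you also propose to cite the result, exactly as the paper does, this does not affect the argument.
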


\begin{proof}

By construction of the Euclidean cone, the natural map $\psi: \cone(\partial_\angle X) \to X$ given by $\psi(t,\alpha) = \alpha(t)$ is 1-Lipschitz.  It follows that $\Psi_d^\omega$ is 1-Lipschitz.

Let $(\alpha_n), (\beta_n)\in \partial_\angle^\omega X$.  For any $s>0$,
     \begin{align*}
        \d\bigl((\alpha_n), (\beta_n)\bigr) & = \lim_n{}^\omega \angle(\alpha_n, \beta_n) = \lim_n{}^\omega\lim\limits_{t\to\infty} \overline\angle_{x_0}\bigl(\alpha_n(t), \beta_n(t)\bigr)\\ &\geq \lim_n{}^\omega~\overline\angle_{x_0}\bigl(\alpha_n(sd_n), \beta_n(sd_n)\bigr) \\
        &= \overline\angle_{(x_0)}\bigl(\overline\Psi_d^\omega(\alpha_n) (s),\overline\Psi_d^\omega(\beta_n) (s) \bigr).
     \end{align*}
Since this holds for any $s>0$,
$$ \d\bigl((\alpha_n), (\beta_n)\bigr)\geq \lim\limits_{s\to\infty} \overline\angle_{(x_0)}\bigl(\overline\Psi_d^\omega(\alpha_n) (s),\overline\Psi_d^\omega(\beta_n) (s) \bigr)= \d\bigl(\overline\Psi_d^\omega(\alpha_n), \overline\Psi_d^\omega(\beta_n)\bigr).$$
Thus $\overline\Psi_d^\omega$ is $1$-Lipschitz.

If $X$ is proper and admits a cocompact action, by \cite{GeogheganOntaneda2007} there is some constant $K$ (depending only on $X$ and the action) such that for every $(x_n)\in \conn\bigl(X, (x_0), d\bigr)$ there exists a sequence of geodesic rays $\alpha_n$ such that $\alpha_n(0) = x_0$ and $\d\bigl(\alpha_n(t_nd_n),  x_n\bigr)\leq K$ in $X$, where $t_n$ converges $\omega$-almost surely to $t = \d\bigl((x_0), (x_n)\bigr)$ in $\conn\bigl(X, (x_0), d\bigr)$.  Thus $\Psi_d^\omega\bigl(t, (\alpha_n)\bigr) = (x_n)$, i.e. $\Psi_d^\omega$ is surjective.

Showing that $\overline\Psi^\omega_d$ is surjective requires a bit more work.  Fix $\tilde \alpha$ a geodesic in $\conn\bigl(X, (x_0), d\bigr)$ based at $(x_0)$.  Since $\Psi^\omega_d$ is surjective, there exist $(\alpha_n^k)\in \partial_\angle^\omega X$ such that $\Psi^\omega_d\bigl(k,(\alpha_n^k)\bigr) = \tilde \alpha(k)$.  For $k\in \mathbb N$, fix a representative $(x_n^k)$ of $\tilde\alpha(k)$.  Let

$$A_i = \left\{ n \ \Bigl|\ \d(\alpha^j_n(jd_n), x^j_n \bigr)< \frac{d_n}{i} \text{ for all } j\leq i\right\}.$$

Notice that $A_i$ is $\omega$-large for each $i$ and forms a nested sequence.  Let $D_n = \bigl\{i\leq n \mid n\in A_i\bigr\}$ and $ m_n = \max D_n$ if $D_n\neq \emptyset$ and $1$ otherwise.  Then $m_n$ diverges $\omega$-almost surely and  $$\d\bigl(\alpha^{m_n}_n(m_nd_n), x_n^{m_n}\bigr) < \dfrac{d_n}{m_n}.$$

Since $m_n$ diverges $\omega$-almost surely, this together with the CAT(0) condition shows that for $\omega$-almost all $n$
$$\d\bigl(\alpha^{m_n}_n(kd_n), x_n^{k}\bigr) < \dfrac{d_n}{l},$$  for any fixed $k,l\in \mathbb N$.

Thus $\overline \Psi_d^\omega (\alpha^{m_n}_n) = \tilde \alpha$ and $\overline \Psi_d^\omega$ is surjective.
\end{proof}

\begin{lem}\label{flat sector}
For fixed $\alpha,\beta\in \partial X$,
$$\angle\bigl(\overline\Psi_d(\alpha), \overline\Psi_d(\beta)\bigr)= \angle\bigl(\alpha, \beta\bigr)= \angle_{(x_0)}\bigl(\overline\Psi_d(\alpha),\overline\Psi_d(\beta)\bigr).$$

In particular, if $\angle(\alpha, \beta)< \pi$, then the geodesics $\overline\Psi_d(\alpha)$ and $\overline\Psi_d(\beta)$ bound a Euclidean sector in $\conn\bigl(X, (x_0), d\bigr)$ for every scaling sequence $d$ and every ultrafilter $\omega$.
\end{lem}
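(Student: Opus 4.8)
\emph{Proof strategy.}
The plan is to evaluate the comparison angles between the rays $\overline\Psi_d(\alpha)$ and $\overline\Psi_d(\beta)$ inside the asymptotic cone $\conn\bigl(X,(x_0),d\bigr)$ directly in terms of comparison angles in $X$, to observe that along the diagonal these comparison angles are \emph{constant} and equal to $\angle(\alpha,\beta)$, and then to read all three quantities off the definitions of $\angle$ and $\angle_{(x_0)}$.

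First I would record what is already noted above: $\overline\Psi_d(\alpha)$ and $\overline\Psi_d(\beta)$ are unit-speed geodesic rays in $\conn\bigl(X,(x_0),d\bigr)$ based at $(x_0)$, so $\d\bigl((x_0),\overline\Psi_d(\alpha)(s)\bigr)=s$ and $\d\bigl((x_0),\overline\Psi_d(\beta)(t)\bigr)=t$, while by definition of the asymptotic-cone metric
\[
\d\bigl(\overline\Psi_d(\alpha)(s),\overline\Psi_d(\beta)(t)\bigr)=\lim\nolimits^\omega\frac{1}{d_n}\,\d\bigl(\alpha(sd_n),\beta(td_n)\bigr).
\]
Now a comparison angle at a vertex depends only on the three pairwise distances of the triangle, is unchanged when all three distances are scaled by a common factor, and depends continuously on those distances when the two sides at the vertex have fixed positive lengths. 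The three side lengths of the comparison triangle for $\triangle\bigl(x_0,\alpha(sd_n),\beta(td_n)\bigr)$ in $X$, after dividing by $d_n$, are $s$, $t$, and $\tfrac{1}{d_n}\d\bigl(\alpha(sd_n),\beta(td_n)\bigr)$, whose $\omega$-limit is the side-length triple of the comparison triangle for $\triangle\bigl((x_0),\overline\Psi_d(\alpha)(s),\overline\Psi_d(\beta)(t)\bigr)$ in $\conn\bigl(X,(x_0),d\bigr)$. Since ultralimits commute with continuous functions, this yields the key identity
\[
\overline\angle_{(x_0)}\bigl(\overline\Psi_d(\alpha)(s),\overline\Psi_d(\beta)(t)\bigr)=\lim\nolimits^\omega\overline\angle_{x_0}\bigl(\alpha(sd_n),\beta(td_n)\bigr)\qquad\text{for all }s,t>0.
\]

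Next I would put $s=t$. Since $d=(d_n)$ is a scaling sequence it is $\omega$-divergent, so $\lim^\omega d_n=\infty$ and hence $\lim^\omega sd_n=\infty$; as $r\mapsto\overline\angle_{x_0}\bigl(\alpha(r),\beta(r)\bigr)$ converges to $\angle(\alpha,\beta)$ as $r\to\infty$, the ultralimit on the right of the key identity equals $\angle(\alpha,\beta)$. Thus $\overline\angle_{(x_0)}\bigl(\overline\Psi_d(\alpha)(s),\overline\Psi_d(\beta)(s)\bigr)=\angle(\alpha,\beta)$ for \emph{every} $s>0$. Applying, in the CAT(0) space $\conn\bigl(X,(x_0),d\bigr)$, the defining formulas $\angle(\gamma,\delta)=\lim_{s\to\infty}\overline\angle_{(x_0)}\bigl(\gamma(s),\delta(s)\bigr)$ and $\angle_{(x_0)}(\gamma,\delta)=\lim_{s\to 0}\overline\angle_{(x_0)}\bigl(\gamma(s),\delta(s)\bigr)$ to the rays $\gamma=\overline\Psi_d(\alpha)$ and $\delta=\overline\Psi_d(\beta)$, and using that the comparison angle is constantly $\angle(\alpha,\beta)$ in $s$, both limits equal $\angle(\alpha,\beta)$, which is precisely the asserted chain of equalities. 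The ``in particular'' clause then follows at once: if $\angle(\alpha,\beta)<\pi$ the rays $\overline\Psi_d(\alpha)$ and $\overline\Psi_d(\beta)$ satisfy $\angle=\angle_{(x_0)}<\pi$ in the CAT(0) space $\conn\bigl(X,(x_0),d\bigr)$, so the Flat Sector Theorem applies and their convex hull is isometric to a Euclidean sector of angle $\angle(\alpha,\beta)$.

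There is no substantial obstacle here; the only point requiring care is the derivation of the key identity, where one should check that the triples being limited really are (possibly degenerate) triangles, i.e. that $|s-t|\le\tfrac{1}{d_n}\d\bigl(\alpha(sd_n),\beta(td_n)\bigr)\le s+t$, which is the triangle inequality in $X$, so that the continuity-and-ultralimit interchange is legitimate. Everything else is bookkeeping with the definitions already set up above.
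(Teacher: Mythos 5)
Your proposal is correct and follows essentially the same route as the paper: both compute $\overline\angle_{(x_0)}\bigl(\Psi_d(t,\alpha),\Psi_d(t,\beta)\bigr)$ as the ultralimit $\lim_n^\omega \overline\angle_{x_0}\bigl(\alpha(td_n),\beta(td_n)\bigr)$, observe this is constantly $\angle(\alpha,\beta)$ since $td_n$ is $\omega$-divergent, take the $t\to\infty$ and $t\to 0$ limits, and invoke the Flat Sector Theorem. Your version merely makes explicit the continuity-of-comparison-angles and interchange-of-ultralimit justifications that the paper leaves implicit.
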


\begin{proof}

By the Flat Sector Theorem, the second conclusion of the lemma will follow from the first.
The first comes from the pair of calculations
    \begin{align*}
        \angle\bigl(\overline\Psi_d(\alpha), \overline\Psi_d(\beta)\bigr)  &=  \lim\limits_{t\to\infty} \overline\angle_{(x_0)}\bigl(\Psi_d(t,\alpha), \Psi_d(t,\beta)\bigr) \\
        & =  \lim\limits_{t\to\infty} \lim_n{}^\omega ~\overline\angle_{x_0}\bigl(\alpha(td_n), \beta(td_n)\bigr) \\
        &= \lim\limits_{t\to\infty} \angle\bigl(\alpha, \beta\bigr) = \angle\bigl(\alpha, \beta\bigr)
     \end{align*}
and similarly
     \begin{align*}
        \angle_{(x_0)}\bigl(\overline\Psi_d(\alpha),\overline\Psi_d(\beta)\bigr)  &=  \lim\limits_{t\to 0} \overline\angle_{(x_0)}\bigl(\Psi_d(t,\alpha), \Psi_d(t,\beta)\bigr) \\
        & =  \lim\limits_{t\to0} \lim_n{}^\omega~ \overline\angle_{x_0}\bigl(\alpha(td_n), \beta(td_n)\bigr) \\
        & = \lim\limits_{t\to 0} \angle\bigl(\alpha, \beta\bigr) = \angle\bigl(\alpha, \beta\bigr).\qedhere
     \end{align*}
\end{proof}

\begin{prop}\label{isometric embedding}
The map $\Psi_d$ is an isometric embedding.  Thus $\Psi_d^\omega$ restricted to the diagonal is an isometric embedding.
\end{prop}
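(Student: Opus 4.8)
The plan is to show that $\Psi_d$ preserves distances by a direct computation using the Euclidean-cone distance formula together with Lemma~\ref{flat sector}. Fix $(t,\alpha), (t',\beta) \in \cone(\partial_\angle X)$. By definition of the cone metric,
\[
\d^2\bigl((t,\alpha),(t',\beta)\bigr) = t^2 + (t')^2 - 2tt'\cos\bigl(\min\{\pi, \angle(\alpha,\beta)\}\bigr),
\]
(reading the formula in Definition~\ref{cones} with the understood convention that the angle is truncated at $\pi$). On the other side, $\Psi_d(t,\alpha)$ and $\Psi_d(t',\beta)$ are the endpoints at parameters $t$ and $t'$ of the two geodesic rays $\overline\Psi_d(\alpha)$ and $\overline\Psi_d(\beta)$ emanating from the common basepoint $(x_0)$ in the asymptotic cone $\conn\bigl(X,(x_0),d\bigr)$. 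Lemma~\ref{flat sector} tells us that $\angle_{(x_0)}\bigl(\overline\Psi_d(\alpha),\overline\Psi_d(\beta)\bigr) = \angle(\alpha,\beta)$, so the Alexandrov angle at $(x_0)$ between these two rays equals $\angle(\alpha,\beta)$.

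First I would handle the case $\angle(\alpha,\beta) < \pi$: here Lemma~\ref{flat sector} gives a genuine Euclidean sector of angle $\angle(\alpha,\beta)$ in $\conn\bigl(X,(x_0),d\bigr)$ containing both rays from parameter $0$ outward, and inside that flat sector the distance between the two endpoints is computed by the Euclidean law of cosines, which is exactly the cone formula above. Hence $\d\bigl(\Psi_d(t,\alpha),\Psi_d(t',\beta)\bigr) = \d\bigl((t,\alpha),(t',\beta)\bigr)$ in this case. Next I would treat the case $\angle(\alpha,\beta) = \pi$. Here the cone formula gives $\d\bigl((t,\alpha),(t',\beta)\bigr) = t + t'$. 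Since $\Psi_d$ is non-expansive (it is $1$-Lipschitz by Proposition~\ref{contracting}, being the restriction of $\Psi_d^\omega$ to the diagonal), we get $\d\bigl(\Psi_d(t,\alpha),\Psi_d(t',\beta)\bigr) \le t + t'$; and the reverse inequality follows from the triangle inequality through $(x_0)$, since $\d\bigl(\Psi_d(t,\alpha),(x_0)\bigr) = t$ and $\d\bigl(\Psi_d(t',\beta),(x_0)\bigr) = t'$ — the latter because $s \mapsto \Psi_d(s,\alpha)$ is a unit-speed geodesic ray from $(x_0)$, as recorded just before Proposition~\ref{contracting}. Actually the equality $\d\bigl(\Psi_d(t,\alpha),(x_0)\bigr) = t$ already shows $t + t' = \d(\Psi_d(t,\alpha),(x_0)) + \d((x_0),\Psi_d(t',\beta)) \ge \d(\Psi_d(t,\alpha),\Psi_d(t',\beta))$; combined with $1$-Lipschitz we are forced into equality. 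Injectivity of $\Psi_d$ is then automatic from distance preservation.

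The main obstacle I anticipate is purely bookkeeping: making sure the truncation-at-$\pi$ convention in the cone distance formula is used consistently (the displayed formula in Definition~\ref{cones} writes $\max$ where $\min$ is intended), and confirming that Lemma~\ref{flat sector} really does place \emph{both} initial segments of the two rays into a single flat sector rather than just matching angles asymptotically — but that is exactly the content of the "in particular" clause of Lemma~\ref{flat sector}, so no extra work is needed. The final sentence of the proposition, that $\Psi_d^\omega$ restricted to the diagonal is an isometric embedding, is immediate: by the remark in Definition~\ref{embedding}, $\Psi_d^\omega(t,(\alpha)) = \Psi_d(t,\alpha)$ for constant sequences, and the diagonal embedding $\cone(\partial_\angle X) \hookrightarrow \cone(\partial^\omega_\angle X)$ is isometric, so the composite is $\Psi_d$, which we have just shown is an isometric embedding.
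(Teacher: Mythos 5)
Your overall structure is the same as the paper's: split into the cases $\angle(\alpha,\beta)<\pi$ and $\angle(\alpha,\beta)=\pi$, and in the first case use the flat sector from Lemma \ref{flat sector} so that the law of cosines in the sector reproduces the cone metric. That part is fine (and your remark about the $\max$/$\min$ typo in Definition \ref{cones} is correct). The problem is your argument in the case $\angle(\alpha,\beta)=\pi$. There you need the \emph{lower} bound $\d\bigl(\Psi_d(t,\alpha),\Psi_d(t',\beta)\bigr)\ge t+t'$, but both facts you invoke point the wrong way: the triangle inequality through $(x_0)$ gives $\d\bigl(\Psi_d(t,\alpha),\Psi_d(t',\beta)\bigr)\le t+t'$, and the $1$-Lipschitz property of $\Psi_d$ gives $\d\bigl(\Psi_d(t,\alpha),\Psi_d(t',\beta)\bigr)\le \d\bigl((t,\alpha),(t',\beta)\bigr)=t+t'$. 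Two upper bounds by the same quantity do not force equality, so as written this case is not proved.

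The missing ingredient is again Lemma \ref{flat sector}, but its \emph{angle} conclusion rather than the sector: it gives $\angle_{(x_0)}\bigl(\overline\Psi_d(\alpha),\overline\Psi_d(\beta)\bigr)=\angle(\alpha,\beta)=\pi$. Since the comparison angle always dominates the Alexandrov angle, $\overline\angle_{(x_0)}\bigl(\Psi_d(t,\alpha),\Psi_d(t',\beta)\bigr)=\pi$ as well, and the law of cosines in the (degenerate) comparison triangle then yields $\d\bigl(\Psi_d(t,\alpha),\Psi_d(t',\beta)\bigr)=t+t'$ exactly. (Equivalently: an Alexandrov angle of $\pi$ at $(x_0)$ means the concatenation of the two rays is a geodesic, so distances along it add.) With that one-line repair your proof coincides with the paper's.
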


\begin{proof}
If $\angle (\alpha, \beta)<\pi$, the geodesics $\overline\Psi_d(\alpha) $ and $\overline\Psi_d(\beta)$ bound a Euclidean sector by Lemma \ref{flat sector} which implies that the metric on the Euclidean cone and the metric in the asymptotic cone agree.  If $\angle(\alpha,\beta) = \pi$ then $\d\bigl((s,\alpha), (t,\beta)\bigr) = s+t = \d\Bigl( \Psi_d(s, \alpha\bigr), \Psi_d(t, \beta) \Bigl)$.  Thus $\Psi_d$ is an isometric embedding.
\end{proof}

\begin{defn}
The ultrafilter $\omega$ induces a total order on the set of scaling sequences by $ (d_n') =d' \preceq d = (d_n)$ if $d_n'\leq d_n$ $\omega$-almost surely.
\end{defn}

\begin{lem}\label{countable}
Every countable set of scaling sequences is bounded with this ultrafilter ordering on the set of scaling sequences.
\end{lem}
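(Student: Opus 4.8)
The plan is to exhibit an explicit upper bound for the family via a diagonal construction. Enumerate the given countable collection of scaling sequences as $d^{(1)}, d^{(2)}, \dots$, write $d^{(k)} = \bigl(d^{(k)}_n\bigr)_{n}$, and set
\[ d_n = \max\bigl\{\, n,\ 1,\ d^{(1)}_n,\ \dots,\ d^{(n)}_n \,\bigr\} \]
for each $n$. I would then verify that $d = (d_n)$ is itself a scaling sequence and that $d^{(k)} \preceq d$ for every $k$; since the order is total, this says precisely that $\{d^{(k)}\}_{k}$ is bounded.

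For the first point, note $d_n \ge 1 > 0$, so $d$ is a sequence of positive reals, and $d_n \ge n$, so for each $M > 0$ the set $\{n : d_n > M\}$ contains the cofinite set $\{n : n > M\}$. As $\omega$ is non-principal, every cofinite subset of $\mathbb{N}$ is $\omega$-large, hence $d_n > M$ holds $\omega$-almost surely for every $M$; that is, $d_n$ has no ultralimit in $\mathbb{R}$, so $d$ is $\omega$-divergent and therefore a scaling sequence. For the second point, fix $k$; for every $n \ge k$ the index $k$ occurs among $1, \dots, n$, so $d_n \ge d^{(k)}_n$ by definition of the maximum. Again using that $\omega$ is non-principal, the cofinite set $\{n : n \ge k\}$ is $\omega$-large, so $d^{(k)}_n \le d_n$ $\omega$-almost surely, i.e.\ $d^{(k)} \preceq d$. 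Since $k$ was arbitrary, $d$ is an upper bound for the whole family.

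I do not anticipate any real obstacle: the argument is a routine diagonalization. The only point meriting care is the repeated use of the fact that cofinite subsets of $\mathbb{N}$ are $\omega$-large, which is immediate from the non-principality of $\omega$ and is what makes both the $\omega$-divergence of $d$ and each relation $d^{(k)} \preceq d$ go through.
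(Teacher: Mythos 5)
There is a genuine gap: you have proved only that the family is bounded \emph{above}, but the lemma asserts that the set is bounded in the order, and the paper's proof (and its later use) requires a lower bound as well --- indeed the paper dispatches the upper bound in one line with essentially your construction $\overline d_n = \max\{d^i_n \mid i \le n\}$ and then remarks that ``it takes more care to find a lower bound.'' The lower bound is the delicate half. The obvious dual of your diagonalization, $\underline d_n = \min\{d^j_n \mid j \le n\}$, fails because at stage $n$ you are taking the minimum over more and more sequences, and the result need not be $\omega$-divergent: for instance if $d^j_n = n/j$ for $n \ge j$ (padded arbitrarily for $n < j$), each $d^j$ is a perfectly good scaling sequence, yet $\min\{d^j_n \mid j \le n\} = d^n_n = 1$ for all $n$.

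The paper circumvents this by controlling \emph{how many} sequences enter the minimum at each index: it sets $A_i = \{n \mid d^j_n > i-1 \text{ for all } j \le i\}$, notes each $A_i$ is $\omega$-large (a finite intersection of $\omega$-large sets) and that the $A_i$ are nested with empty intersection, and then for $n \in A_i \setminus A_{i+1}$ defines $\underline d_n = \min\{d^j_n \mid j \le i\}$. This guarantees both $\underline d_n > i-1$ on $A_i$ (so $\underline d$ is $\omega$-divergent) and $\underline d_n \le d^i_n$ on the $\omega$-large set $A_i$ (so $\underline d \preceq d^i$). Your upper-bound argument is fine and matches the paper's, but without the lower bound the proof of the lemma is incomplete.
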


\begin{proof}
  Let $\bigl\{d^i = (d_n^i)\bigr\}$ be a countable set of scaling sequences.  Let $\overline d = (\overline d_n) $ where $\overline d_n = \max\{ d_n^i\mid i\leq n\}$.  It is immediate that $d^i\preceq \overline d$ for all $i$.

  It takes more care to find a lower bound.  For each $i\in\mathbb N$, let $$A_i = \{n\in\mathbb N\mid d_n^j > i-1 \text{ for all } j\leq i\}.$$

  Since each scaling sequence is $\omega$-divergent, $A_i$ is the finite intersection of finitely many $\omega$-large set.  Hence, $A_i$ is $\omega$-large. The sets $A_i$ form a nested sequence, i.e. $\mathbb N = A_1\supset A_2\supset \cdots$, that we will use to defined $\underline d$.  Notice that $\bigcap_{i=1}^{\infty} A_i = \emptyset$, so $\mathbb N = \bigcup_{i=1}^{\infty} (A_i \backslash A_{i+1})$.

  For $n\in A_i\backslash A_{i+1}$, let $\underline d_n = \min\{ d_n^j \mid j\leq i\}$ and let $\underline d = (\underline d_n)$. Thus $\underline d_n \leq d^i_n$ on $A_i$, which implies that $\underline d \preceq d^i$ for each $i$.  The sequence $\underline d$ is $\omega$-divergent since $\underline d_n > i-1$ for all $n\in A_i$.
\end{proof}

\begin{lem}\label{isometric embedding2}
    For every countable subset $C\subset \partial_\angle^\omega X$, there exists a scaling sequence $d=(d_n)$ such that for all $(\tilde d_n)= \tilde d \succeq d$, $\Psi_{\tilde d}^\omega$ restricted to $\cone(C)$ is an isometric embedding.  In particular, there exists a scaling sequence $d$ such that if $(\alpha_n),(\beta_n)\in C$, then $\angle\left(\overline\Psi_{\tilde d}^\omega(\alpha_n), \overline\Psi_{\tilde d}^\omega(\beta_n)\right)  =\angle_{(x_0)}\left(\overline\Psi_{\tilde d}^\omega(\alpha_n), \overline\Psi_{\tilde d}^\omega(\beta_n)\right)$ for all $\tilde d \succeq d$.
\end{lem}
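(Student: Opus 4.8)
I would organize the argument around reducing to a single angle inequality and then choosing the scaling sequence so that it holds for all pairs of points of $C$ at once.

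The plan is as follows. Write $C=\{c^i=(\alpha^i_n)\mid i\in\mathbb N\}$ and $\theta_{ij}=\d_{\partial_\angle^\omega X}(c^i,c^j)=\lim_n{}^\omega\angle(\alpha^i_n,\alpha^j_n)\in[0,\pi]$. Since $\overline\Psi^\omega_{\tilde d}$ is $1$-Lipschitz by Proposition \ref{contracting}, and since $\angle_{(x_0)}\le\angle$ on the angle boundary of any CAT(0) space by definition, for every scaling sequence $\tilde d$ we have
\[
\angle_{(x_0)}\!\bigl(\overline\Psi^\omega_{\tilde d}(c^i),\overline\Psi^\omega_{\tilde d}(c^j)\bigr)\ \le\ \angle\!\bigl(\overline\Psi^\omega_{\tilde d}(c^i),\overline\Psi^\omega_{\tilde d}(c^j)\bigr)\ \le\ \theta_{ij}.
\]
So it suffices to produce $d$ such that for all $\tilde d\succeq d$ and all $i,j$ the leftmost term is $\ge\theta_{ij}$; the three terms are then equal, which is the ``in particular'' clause, and the isometric embedding of $\cone(C)$ follows exactly as in Proposition \ref{isometric embedding}. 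Indeed, if $\theta_{ij}<\pi$ the Flat Sector Theorem makes $\overline\Psi^\omega_{\tilde d}(c^i)$ and $\overline\Psi^\omega_{\tilde d}(c^j)$ bound a Euclidean sector of angle $\theta_{ij}$ in the CAT(0) space $\conn(X,(x_0),\tilde d)$; if $\theta_{ij}=\pi$ the angle $\pi$ at $(x_0)$ forces the two rays to concatenate to a geodesic; in either case $\Psi^\omega_{\tilde d}$ preserves the Euclidean cone metric on $\cone(\{c^i,c^j\})$, and any two points of $\cone(C)$ lie in some such $\cone(\{c^i,c^j\})$.

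Next I would express the left-hand angle as an iterated limit. For fixed $s>0$ the comparison triangle on $(x_0)$, $\Psi^\omega_{\tilde d}(s,c^i)=(\alpha^i_n(s\tilde d_n))$, $\Psi^\omega_{\tilde d}(s,c^j)=(\alpha^j_n(s\tilde d_n))$ in $\conn(X,(x_0),\tilde d)$ is isosceles with legs of length $s$; rescaling the isosceles comparison triangles of $x_0,\alpha^i_n(s\tilde d_n),\alpha^j_n(s\tilde d_n)$ in $X$ by $1/\tilde d_n$ and using continuity of the law of cosines gives
\[
\overline\angle_{(x_0)}\!\bigl(\Psi^\omega_{\tilde d}(s,c^i),\Psi^\omega_{\tilde d}(s,c^j)\bigr)=\lim_n{}^\omega\,\overline\angle_{x_0}\!\bigl(\alpha^i_n(s\tilde d_n),\alpha^j_n(s\tilde d_n)\bigr),
\]
hence $\angle_{(x_0)}(\overline\Psi^\omega_{\tilde d}(c^i),\overline\Psi^\omega_{\tilde d}(c^j))=\lim_{s\to0}\lim_n{}^\omega\overline\angle_{x_0}(\alpha^i_n(s\tilde d_n),\alpha^j_n(s\tilde d_n))$. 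Because $\overline\angle_{x_0}(\alpha^i_n(r),\alpha^j_n(r))\le\angle(\alpha^i_n,\alpha^j_n)$ for all $r$, the inner ultralimit is always $\le\theta_{ij}$, so the whole problem reduces to forcing $\lim_n{}^\omega\overline\angle_{x_0}(\alpha^i_n(s\tilde d_n),\alpha^j_n(s\tilde d_n))\ge\theta_{ij}$ for every $s>0$ at once (the quantity is nondecreasing in $s$, so this makes it equal $\theta_{ij}$ for every $s$, and its limit as $s\to0$ is $\theta_{ij}$).

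The core step is the choice of $d$, and the uniformity over all $s>0$ is the only real obstacle: a $d$ chosen merely large enough to ``detect'' the angle between $c^i$ and $c^j$ at one scale could still let $\overline\Psi^\omega_{\tilde d}(c^i),\overline\Psi^\omega_{\tilde d}(c^j)$ become tangent near $(x_0)$ — the behaviour that cannot occur for honest boundary points by Proposition \ref{isometric embedding}. To fix this, for each pair $i,j$ and each $k\in\mathbb N$ use $\overline\angle_{x_0}(\alpha^i_n(r),\alpha^j_n(r))\to\angle(\alpha^i_n,\alpha^j_n)$ as $r\to\infty$ to pick, for each $n$, a threshold $R^{i,j,k}_n\ge1$ with $\overline\angle_{x_0}(\alpha^i_n(r),\alpha^j_n(r))>\angle(\alpha^i_n,\alpha^j_n)-\tfrac1k$ for $r\ge R^{i,j,k}_n$, and set $d^{i,j,k}_n=n\,R^{i,j,k}_n$, a divergent (hence $\omega$-divergent) sequence. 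The weight $n$ is the device that makes everything work: if $\tilde d_n\ge d^{i,j,k}_n$, then for \emph{any} fixed $s>0$ we get $s\tilde d_n\ge s\,n\,R^{i,j,k}_n\ge R^{i,j,k}_n$ as soon as $sn\ge1$, i.e.\ for all but finitely many $n$. The family $\{d^{i,j,k}\mid i,j,k\in\mathbb N\}$ is countable, so Lemma \ref{countable} provides a scaling sequence $d$ with $d^{i,j,k}\preceq d$ for all $i,j,k$. Then for any $\tilde d\succeq d$, any $i,j$, any $s>0$, and any $k$: $\omega$-almost surely $\tilde d_n\ge d^{i,j,k}_n$ and $sn\ge1$, hence $\omega$-almost surely $s\tilde d_n\ge R^{i,j,k}_n$ and so $\overline\angle_{x_0}(\alpha^i_n(s\tilde d_n),\alpha^j_n(s\tilde d_n))>\angle(\alpha^i_n,\alpha^j_n)-\tfrac1k$; passing to the ultralimit and then letting $k\to\infty$ gives $\lim_n{}^\omega\overline\angle_{x_0}(\alpha^i_n(s\tilde d_n),\alpha^j_n(s\tilde d_n))\ge\theta_{ij}$. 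Together with the reverse bound of the previous paragraph and the reduction of the first paragraph, this completes the proof; everything after the definition of $d^{i,j,k}$ is routine bookkeeping with $\omega$-large sets.
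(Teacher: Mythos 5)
Your proposal is correct and follows essentially the same route as the paper: both arguments pick, for each pair in $C$, thresholds past which the comparison angle at $x_0$ is within a vanishing error of the limit angle, inflate those thresholds by a divergent factor (the paper uses $d_n$ versus $\sqrt{d_n}$ where you use $nR_n$ versus $R_n$) so that $t\tilde d_n$ eventually exceeds the threshold for every fixed $t>0$, invoke Lemma \ref{countable} to dominate the resulting countable family of scaling sequences, and then conclude via the Flat Sector Theorem together with the $\angle=\pi$ case. The differences are only bookkeeping (your extra accuracy index $k$ versus the paper's $1/n$ error tied to the term index).
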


Notice that this implies that if $\lim\limits_{n}{}^\omega \angle (\alpha_n, \beta_n)< \pi$, then the geodesics $\Psi_{\tilde d}^\omega\bigl( t, (\alpha_n)\bigr)$ and $ \Psi_{\tilde d}^\omega( t, (\beta_n)\bigr)$ bound a Euclidean sector for all $\tilde d\succeq d$.

\begin{proof}
    Let $C$ be a countable subset of $\partial_\angle^\omega X$ and fix $(\alpha_n), (\beta_n)\in C$.  Then there exists $d_n$ such that $\overline\angle _{x_0}\bigl(\alpha_n(\sqrt{d_n}), \beta_n(\sqrt{d_n})\bigr)\geq \angle (\alpha_n, \beta_n)- 1/n$.  Thus for any $(\tilde d_n) = \tilde d \succeq d$ and $t>0$, we have $t \tilde{d}_n \ge t d_n \ge t \sqrt{d_n}$ $\omega$-almost surely, and therefore $\overline\angle _{x_0}\bigl(\alpha_n(t\tilde d_n), \beta_n(t\tilde d_n)\bigr)\geq \overline\angle _{x_0}\bigl(\alpha_n(\sqrt{d_n}), \beta_n(\sqrt{d_n})\bigr)\geq \angle (\alpha_n, \beta_n)- 1/n$ $\omega$-almost surely.  Then

    \begin{align*}
        \angle\left(\overline\Psi_{\tilde d}^\omega(\alpha_n), \overline\Psi_{\tilde d}^\omega(\beta_n)\right)  &=  \lim\limits_{t\to\infty} \overline\angle_{(x_0)}\bigl(\Psi_{\tilde d}^\omega(t,(\alpha_n)), \Psi_{\tilde d}^\omega(t,(\beta_n))\bigr) \\
        & =  \lim\limits_{t\to\infty} \lim_n{}^\omega \overline\angle_{x_0}\bigl(\alpha_n(t\tilde d_n), \beta_n(t\tilde d_n)\bigr) \\
        & \leq \lim\limits_{t\to\infty} \lim_n{}^\omega \angle\bigl(\alpha_n, \beta_n\bigr) =  \lim_n{}^\omega \angle\bigl(\alpha_n, \beta_n\bigr)
     \end{align*}
and similarly
     \begin{align*}
        \angle_{(x_0)}\left(\overline\Psi_{\tilde d}^\omega(\alpha_n), \overline\Psi_{\tilde d}^\omega(\beta_n)\right) &=  \lim\limits_{t\to 0} \overline\angle_{(x_0)}\bigl(\Psi_{\tilde d}^\omega\bigl(t,(\alpha_n)\bigr), \Psi_{\tilde d}^\omega\bigl(t,(\beta_n)\bigr)\bigr) \\
        & =  \lim\limits_{t\to0} \lim_n{}^\omega \overline\angle_{x_0}\bigl(\alpha_n(t\tilde d_n), \beta_n(t\tilde d_n)\bigr) \\
        & \geq \lim\limits_{t\to0} \lim_n{}^\omega \bigl( \angle\bigl(\alpha_n, \beta_n\bigr) -1/n \bigr) =  \lim_n{}^\omega \angle\bigl(\alpha_n, \beta_n\bigr).
     \end{align*}

    Thus $\angle\left(\overline\Psi_{\tilde d}^\omega(\alpha_n), \overline\Psi_{\tilde d}^\omega(\beta_n)\right)  =\angle_{(x_0)}\left(\overline\Psi_{\tilde d}^\omega(\alpha_n), \overline\Psi_{\tilde d}^\omega(\beta_n)\right)$.

    \textbf{Case 1: $\lim\limits_{n}{}^\omega \angle (\alpha_n, \beta_n)< \pi$}.  Then the geodesics $\Psi_{\tilde d}^\omega\bigl( t, (\alpha_n)\bigr)$ and $ \Psi_{\tilde d}^\omega( t, (\beta_n)\bigr)$ bound a Euclidean sector which implies that $\d\bigl(\Psi^\omega_d\bigl(t,(\alpha_n)\bigr),\Psi^\omega_d\bigl(s,(\beta_n)\bigr) = \d\Bigl(\bigl(t,(\alpha_n)\bigr), \bigl(s,(\beta_n)\bigr)\Bigr)$.

    \textbf{Case 2: $\lim\limits_{n}{}^\omega \angle (\alpha_n, \beta_n)= \pi$}.  As before, we have $\d\Bigl(\bigl(s,(\alpha_n)\bigr), \bigl(t,(\beta_n)\bigr)\Bigr) = s+t = \d\Bigl( \Psi_d^\omega(s, (\alpha_n)\bigr), \Psi_d^\omega(t, (\beta_n)) \Bigl)$.

    Thus for every pair of elements of $C$ there exists a scaling sequence satisfying the conclusion of the lemma.  Since the set of pairs from $C$ is also countable, Lemma \ref{countable} implies that there exists a sequence $d$ which will satisfy the conclusion of the lemma.
\end{proof}


\begin{defn}
    Fix $x_0 \in X$.  Define $\Xi: X\times\mathbb R^+ \to X$ by $$\Xi(x,t) = \begin{cases} x & \text{ for } t\geq \d(x_0, x)\\ y \in[x_0,x]\text{ such that } \d(x_0, y) = t & \text{ for } t\leq \d(x_0, x)\end{cases}.$$  Notice that $\Xi$ is the canonical geodesic retraction of $X$ to $x_0$ via projection onto the closed ball of radius $t$ about $x_0$.

    Fix scaling sequences $(d_n') =d'\preceq d= (d_n)$.   Then $\Xi$ defines a map $\Theta_{d'}^d : \conn\bigl(X, (x_0), d\bigr) \to \conn\bigl(X, (x_0), d'\bigr)$ by  $\Theta_{d'}^d \bigl((x_n)\bigl) = \Bigl(\Xi\bigl(x_n,\frac{d_n'}{d_n}\d(x_n,x_0)\bigr)\Bigr)$.
\end{defn}

\begin{lem}\label{commutes}
    Let $X$ be a CAT(0) metric space. Then $\Theta_{d'}^d$ is a well-defined 1-Lipschitz map which preserves distance to the observation point of the asymptotic cone.

    If, in addition, $X$ is proper and cocompact; then $\Theta _{d'}^d$ is surjective and $\Theta _{d'}^d\circ \Psi_d^\omega=  \Psi_{d'}^\omega$ for all $(d_n')= d'\preceq d= (d_n)$.
\end{lem}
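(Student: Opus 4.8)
The plan is to verify the three claims in the first paragraph essentially by unwinding the definitions, then handle the surjectivity and the commutation identity using properness/cocompactness and the structure of $\Xi$.

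First, for well-definedness and the 1-Lipschitz property: I would observe that $\Xi(\cdot, t)$ is exactly geodesic projection onto the closed ball $\overline{\Ball}_t(x_0)$, hence 1-Lipschitz in $x$ for each fixed $t$ (nearest-point projection onto a convex set in a CAT(0) space is 1-Lipschitz). The subtlety is that the radius $\frac{d_n'}{d_n}\d(x_n, x_0)$ also depends on $x_n$; so I would prove the pointwise estimate $\d\bigl(\Xi(x, s\d(x_0,x)), \Xi(y, s\d(x_0,y))\bigr) \le \d(x,y)$ for $s \in [0,1]$ directly in a CAT(0) space, e.g. by comparing both points to $\Xi(x, s\d(x_0,y))$ and $\Xi(y, s\d(x_0,y))$, using that $s\d(x_0,\cdot)$ is $s$-Lipschitz so the two projection radii differ by at most $s\,\d(x,y) \le \d(x,y)$, and moving a projection point along its geodesic to $x_0$ by a controlled amount. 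This makes $\Theta_{d'}^d$ well-defined on representatives (bounded sequences map to bounded sequences since $\d(\Xi(x_n,\cdot), x_0) \le \d(x_n,x_0)$), independent of representative (by the same 1-Lipschitz bound the $\omega$-limit of the distance is preserved), and 1-Lipschitz. That $\d\bigl(\Theta_{d'}^d(x_n), (x_0)\bigr) = \lim^\omega \frac{1}{d_n'}\min\{\d(x_n,x_0), \frac{d_n'}{d_n}\d(x_n,x_0)\} = \lim^\omega \frac{\d(x_n,x_0)}{d_n} \cdot \frac{d_n'}{d_n'} $ — more precisely, since $d' \preceq d$ the chosen radius is $\le \d(x_n,x_0)$ $\omega$-a.s., so $\d(\Xi(x_n,\cdot),x_0) = \frac{d_n'}{d_n}\d(x_n,x_0)$ and dividing by $d_n'$ gives $\frac{\d(x_n,x_0)}{d_n}$, which is the distance of $(x_n)$ to $(x_0)$ in the $d$-cone — so the observation point's distance is preserved.

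Next, the commutation identity under properness and cocompactness. Here I would use Proposition \ref{contracting}: $\Psi_d^\omega$ is surjective, so it suffices to check $\Theta_{d'}^d \circ \Psi_d^\omega (t, (\alpha_n)) = \Psi_{d'}^\omega(t,(\alpha_n))$ for each $(\alpha_n) \in \partial_\angle^\omega X$ and $t \ge 0$. By definition $\Psi_d^\omega(t,(\alpha_n)) = (\alpha_n(td_n))$, which has $\d(\alpha_n(td_n), x_0) = td_n$; applying $\Xi$ with radius $\frac{d_n'}{d_n}\cdot td_n = td_n'$ and using that $\alpha_n$ is a geodesic ray through $x_0$ gives exactly the point $\alpha_n(td_n')$ on $[x_0, \alpha_n(td_n)]$, so the image is $(\alpha_n(td_n')) = \Psi_{d'}^\omega(t,(\alpha_n))$. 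For surjectivity of $\Theta_{d'}^d$: given $(y_n) \in \conn(X,(x_0),d')$, surjectivity of $\Psi_{d'}^\omega$ yields $(t,(\alpha_n))$ with $\Psi_{d'}^\omega(t,(\alpha_n)) = (y_n)$, and then $\Theta_{d'}^d\bigl(\Psi_d^\omega(t,(\alpha_n))\bigr) = (y_n)$ by the identity just proved.

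The main obstacle I anticipate is the well-definedness/1-Lipschitz estimate for $\Theta_{d'}^d$, precisely because the projection radius varies with the point; one must combine the 1-Lipschitz property of projection onto a fixed ball with a comparison-geometry estimate controlling how far $\Xi(x,\cdot)$ moves when the radius changes. Everything after that — the behavior on the observation point, the identity $\Theta_{d'}^d \circ \Psi_d^\omega = \Psi_{d'}^\omega$, and surjectivity — follows formally from the definitions together with Proposition \ref{contracting}.
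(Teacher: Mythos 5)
Your proposal is correct and follows essentially the same route as the paper: the same computation showing distance to the observation point is preserved, the same direct verification that $\Theta_{d'}^d\circ\Psi_d^\omega\bigl(t,(\alpha_n)\bigr) = \bigl(\alpha_n(td_n')\bigr) = \Psi_{d'}^\omega\bigl(t,(\alpha_n)\bigr)$, and surjectivity deduced from surjectivity of $\Psi_{d'}^\omega$ (note the commutation identity needs no surjectivity input at all, since it is verified pointwise on the domain of $\Psi_d^\omega$). Your extra care with the $1$-Lipschitz step is warranted, since the paper only cites that projection onto a fixed ball is $1$-Lipschitz while the radius here varies with the point; the clean way to finish your estimate is to observe that $x\mapsto\Xi\bigl(x,s\,\d(x_0,x)\bigr)$ is exactly the proportional reparameterization $x\mapsto\gamma_x(s)$ of the geodesic from $x_0$ to $x$, which is $s$-Lipschitz by convexity of the metric in a CAT(0) space.
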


\begin{proof}
    The maps $\Theta_{d'}^d$ are well-defined 1-Lipschitz maps, since $\Xi|_{X\times\{t\}}$ is 1-Lipschitz for any fixed $t$.   By construction, $\Theta_{d'}^d\bigl((x_n)\bigr)$ has a representative $(x_n')$ where $\d(x_n', x_0) = \frac{d_n'}{d_n}\d(x_n,x_0)$ $\omega$-almost surely.  Thus $$\d\bigl(\Theta_{d'}^d\bigl((x_n)\bigr), (x_0)\bigr) = \lim{}^\omega \dfrac{\d(x_n', x_0)}{d_n'} =\lim{}^\omega \dfrac{\d(x_n, x_0)}{d_n}= \d\bigl((x_n),(x_0)\bigr).$$

    Notice that $\Theta_{d'}^d \circ \Psi_d^\omega \bigl(t, (\alpha_n)\bigr) = \Theta_{d'}^d\bigl(\bigl(\alpha_n(td_n)\bigr)\bigr)=\bigl(\alpha_n(td_n')\bigr) = \Psi_{d'}^\omega \bigl(t, (\alpha_n)\bigr)$.  If $X$ is also proper and cocompact, then $\Psi^\omega_{d'}$ is surjective, which implies that is $\Theta_{d'}^d$ must also be surjective.
\end{proof}

Thus $\Theta^d_{d'}$, while not an isometry, does send geodesics in $\conn\bigl(X, (x_0), d\bigr)$ based at $(x_0)$ to  geodesics in $\conn\bigl(X, (x_0), d'\bigr)$ based at $(x_0)$.  When convenient, we can consider the function $\Theta^d_{d'}$ induces on the boundaries of  $ \conn\bigl(X, (x_0), d\bigr)$ and $ \conn\bigl(X, (x_0), d'\bigr)$.

\begin{cor}
  Let $X$ be a proper cocompact CAT(0) space.  Then the map  $$\overline\Theta_{d'}^d :\partial_\angle \conn\bigl(X, (x_0), d\bigr) \to \partial_\angle \conn\bigl(X, (x_0), d'\bigr)$$ defined by $\overline \Theta _{d'}^d \circ \overline\Psi_d^\omega( \alpha_n) = \overline\Psi_{d'}^\omega( \alpha_n)$  is a non-expansive surjection for all $d'\preceq d$.
\end{cor}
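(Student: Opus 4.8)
The plan is to obtain every claim — well-definedness of the formula, surjectivity, and non-expansiveness — by pushing the corresponding statements about $\Psi_d^\omega$, $\Psi_{d'}^\omega$ and $\Theta_{d'}^d$ (Proposition \ref{contracting} and Lemma \ref{commutes}) through to the induced boundary maps. First I would record the observation that legitimizes the stated formula: since $\Theta_{d'}^d$ is $1$-Lipschitz and preserves distance to the observation point $(x_0)$ (Lemma \ref{commutes}), it sends each unit-speed geodesic ray $\gamma$ in $\conn\bigl(X,(x_0),d\bigr)$ with $\gamma(0)=(x_0)$ to the curve $t\mapsto\Theta_{d'}^d\bigl(\gamma(t)\bigr)$, which is again a unit-speed geodesic ray based at $(x_0)$ — a $1$-Lipschitz curve starting at $(x_0)$ along which the point at parameter $t$ sits at distance exactly $t$ from $(x_0)$ is forced to be a geodesic. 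Hence $\Theta_{d'}^d$ genuinely induces a map $\overline\Theta_{d'}^d$ on the visual boundaries with $\overline\Theta_{d'}^d(\gamma)=\Theta_{d'}^d\circ\gamma$, and by Lemma \ref{commutes} this induced map satisfies $\overline\Theta_{d'}^d\circ\overline\Psi_d^\omega(\alpha_n)=\overline\Psi_{d'}^\omega(\alpha_n)$, which is exactly the defining formula.

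Well-definedness and surjectivity then come essentially for free. By Proposition \ref{contracting}, $\overline\Psi_d^\omega$ maps onto $\partial_\angle\conn\bigl(X,(x_0),d\bigr)$, so the formula prescribes $\overline\Theta_{d'}^d$ at every point of the domain; and if $\overline\Psi_d^\omega(\alpha_n)=\overline\Psi_d^\omega(\beta_n)$, then $\Psi_d^\omega\bigl(t,(\alpha_n)\bigr)=\Psi_d^\omega\bigl(t,(\beta_n)\bigr)$ for all $t$, so applying $\Theta_{d'}^d$ and using $\Theta_{d'}^d\circ\Psi_d^\omega=\Psi_{d'}^\omega$ gives $\Psi_{d'}^\omega\bigl(t,(\alpha_n)\bigr)=\Psi_{d'}^\omega\bigl(t,(\beta_n)\bigr)$ for all $t$, i.e. $\overline\Psi_{d'}^\omega(\alpha_n)=\overline\Psi_{d'}^\omega(\beta_n)$. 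Since $\overline\Psi_{d'}^\omega$ is surjective (Proposition \ref{contracting}) and factors as $\overline\Theta_{d'}^d\circ\overline\Psi_d^\omega$, the map $\overline\Theta_{d'}^d$ is surjective.

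For non-expansiveness I would repeat the comparison-angle argument from the proof of Proposition \ref{contracting}. Let $\mu=\overline\Psi_d^\omega(\alpha_n)$ and $\nu=\overline\Psi_d^\omega(\beta_n)$; I must show $\d\bigl(\overline\Psi_{d'}^\omega(\alpha_n),\overline\Psi_{d'}^\omega(\beta_n)\bigr)\le\d\bigl(\overline\Psi_d^\omega(\alpha_n),\overline\Psi_d^\omega(\beta_n)\bigr)$. For each fixed $s>0$, the points $\Psi_{d'}^\omega\bigl(s,(\alpha_n)\bigr)$ and $\Psi_{d'}^\omega\bigl(s,(\beta_n)\bigr)$ both lie at distance $s$ from $(x_0)$ in $\conn\bigl(X,(x_0),d'\bigr)$, and similarly $\Psi_d^\omega\bigl(s,(\alpha_n)\bigr)$, $\Psi_d^\omega\bigl(s,(\beta_n)\bigr)$ lie at distance $s$ from $(x_0)$ in $\conn\bigl(X,(x_0),d\bigr)$; since $\Theta_{d'}^d$ is $1$-Lipschitz and carries the $d$-pair onto the $d'$-pair, the distance between the $d'$-points is at most that between the $d$-points. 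Comparison triangles sharing the leg lengths $(s,s)$ with a shorter base have a smaller angle at the apex, so
\[\overline\angle_{(x_0)}\bigl(\Psi_{d'}^\omega(s,(\alpha_n)),\Psi_{d'}^\omega(s,(\beta_n))\bigr)\le\overline\angle_{(x_0)}\bigl(\Psi_{d}^\omega(s,(\alpha_n)),\Psi_{d}^\omega(s,(\beta_n))\bigr),\]
and letting $s\to\infty$, using (as in Proposition \ref{contracting}) that the angle-boundary distance is the limit of these comparison angles, gives $\d\bigl(\overline\Theta_{d'}^d(\mu),\overline\Theta_{d'}^d(\nu)\bigr)\le\d(\mu,\nu)$. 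The only step demanding genuine care is the first paragraph's claim that $\Theta_{d'}^d$ sends based geodesic rays to based geodesic rays so that the formula truly defines a map of boundaries; everything else is bookkeeping layered on Proposition \ref{contracting} and Lemma \ref{commutes}, so I do not expect a real obstacle.
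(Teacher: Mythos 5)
Your proposal is correct and follows essentially the same route as the paper: well-definedness via the relation $\Theta_{d'}^d\circ\Psi_d^\omega=\Psi_{d'}^\omega$, surjectivity from surjectivity of $\overline\Psi_{d'}^\omega$, and non-expansiveness by comparing the comparison angles $\overline\angle_{(x_0)}$ at a fixed parameter $s$ and letting $s\to\infty$. Your first paragraph merely makes explicit an observation the paper records in the text immediately after Lemma \ref{commutes} (that $\Theta_{d'}^d$ carries based geodesic rays to based geodesic rays), and your justification of the key inequality step via the monotonicity of the apex angle in the base length is exactly what the paper leaves implicit.
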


\begin{proof} Let $(d'_n) = d'\preceq d = (d_n)$.
  If $\overline\Psi_d^\omega( \alpha_n) = \overline\Psi_d^\omega( \beta_n)$, then $\bigl(\alpha_n(td_n)\bigr) = \bigl(\beta_n(td_n)\bigr)$ for all $t$ which implies that $\bigl(\alpha_n(td_n')\bigr) = \bigl(\beta_n(td_n')\bigr)$ for all $t$. Hence, $\overline\Psi_{d'}^\omega( \alpha_n) = \overline\Psi_{d'}^\omega( \beta_n)$.  Therefore $\overline\Theta_{d'}^d$ is well-defined.

   Then
   \begin{align*}
        \angle\bigl(\overline \Theta _{d'}^d \circ \overline\Psi_d^\omega( \alpha_n),\overline \Theta _{d'}^d \circ \overline\Psi_d^\omega( \beta_n)\bigr)
        &= \angle\bigl(\overline\Psi_{d'}^\omega( \alpha_n), \overline\Psi_{d'}^\omega( \beta_n)\bigr) \\
        & = \lim\limits_{t\to\infty} \overline\angle_{(x_0)}\bigl(\Psi_{d'}^\omega\bigl(t, (\alpha_n)\bigr), \Psi_{d'}^\omega\bigl( t, (\beta_n)\bigr) \bigr)\\
        & = \lim\limits_{t\to\infty} \overline\angle_{(x_0)}\bigl(\Theta _{d'}^d\circ \Psi_d^\omega\bigl(t, (\alpha_n)\bigr), \Theta _{d'}^d\circ \Psi_d^\omega\bigl( t, (\beta_n)\bigr) \bigr)\\
        & \leq \lim\limits_{t\to\infty} \overline\angle_{(x_0)}\bigl(\Psi_d^\omega\bigl(t, (\alpha_n)\bigr), \Psi_d^\omega\bigl( t, (\beta_n)\bigr) \bigr)\\
        & = \angle\bigl(\overline\Psi_d^\omega( \alpha_n), \overline\Psi_d^\omega( \beta_n)\bigr)
   \end{align*}
   implies that $\overline\Theta_{d'}^d$ is a non-expansive map.  Since $\overline \Psi^\omega_{d'}$ is surjective, so is $\overline\Theta_{d'}^d$.
\end{proof}

Thus $\Bigl(\conn\bigl(X, (x_0), d\bigr), \Theta_{d'}^{d}\Bigr)$ forms a directed system of metric spaces and one can consider the direct limit $\lim\limits_{\rightarrow} \Bigl(\conn\bigl(X, (x_0), d\bigr), \Theta_{d'}^{d}\Bigr)$ and the inverse limit $\lim\limits_{\leftarrow} \Bigl(\conn\bigl(X, (x_0), d\bigr), \Theta_{d'}^{d}\Bigr)$.  Before computing the direct limit, we require the following lemma.

\begin{lem}\label{surjective}
    Let $X$ be a proper CAT(0) space.  For every $(\alpha_n) \in \partial_\angle^\omega X$, there exits a scaling sequence $d$ such that $\overline\Psi^{\omega}_{ d'} (\alpha_n) = \overline\Psi_{ d'}(\alpha)$ for all $ d'\preceq d$ where $\alpha = \lim^\omega \alpha_n$ (the limit is taken in $\partial_\infty X$).  Thus $\Psi^{\omega}_{ d'} \bigl(t,(\alpha_n)\bigr) = \Psi_{ d'}\bigl(t,\alpha\bigr)$ for all $t$ and all $ d'\preceq d$.
\end{lem}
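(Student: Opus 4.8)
The plan is to build the scaling sequence $d$ by a diagonal construction, using compactness of the visual boundary to quantify how fast $\alpha_n$ visually converges to $\alpha$, and then to spread that single‑scale control across all smaller scales via convexity of the CAT(0) metric. No appeal to Lemma \ref{countable} will be needed, since one explicit $d$ will work for every $d'\preceq d$ at once.

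Since $X$ is proper, $\overline X$ is compact, hence $\partial_\infty X$ is compact and the ultralimit $\alpha=\lim^\omega\alpha_n$ exists and is unique in $\partial_\infty X$. First I would unpack what visual convergence gives at each integer scale: for each $k\in\mathbb N$ the basic neighborhood $U(\alpha,k,1/2)$ contains $\alpha_n$ for $\omega$‑almost every $n$, and since $\alpha$ and $\alpha_n$ are unit‑speed rays from $x_0$, a point of $\im(\alpha_n)$ within $1/2$ of $\alpha(k)$ has parameter within $1/2$ of $k$, so $\d\bigl(\alpha(k),\alpha_n(k)\bigr)<1$ $\omega$‑almost surely. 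Thus $B_k:=\bigl\{n\mid \d(\alpha(k),\alpha_n(k))<1\bigr\}$ is $\omega$‑large for every $k$. The key geometric input is then that $s\mapsto \d\bigl(\alpha(s),\alpha_n(s)\bigr)$ is convex and vanishes at $s=0$ by the CAT(0) condition (both rays emanate from $x_0$), so $\d\bigl(\alpha(s),\alpha_n(s)\bigr)\le s/k$ for all $0\le s\le k$ whenever $n\in B_k$.

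Next I would do the diagonalization, as in the proof of Proposition \ref{contracting}: set $D_n=\{k\le n\mid n\in B_k\}$, let $m_n=\max D_n$ (and $m_n=1$ if $D_n=\emptyset$, which happens for only $\omega$‑negligibly many $n$ since $D_n\ni1$ whenever $n\in B_1$), and put $d:=(\sqrt{m_n})$. Because each $B_k$ is $\omega$‑large, $m_n$ is $\omega$‑divergent, so $d$ is a scaling sequence; and $n\in B_{m_n}$ $\omega$‑almost surely, whence $\d\bigl(\alpha(s),\alpha_n(s)\bigr)\le s/m_n$ for all $0\le s\le m_n$, $\omega$‑almost surely. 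Now fix $d'=(d'_n)\preceq d$ and $t>0$. Then $\omega$‑almost surely $d'_n\le\sqrt{m_n}$ and, since $m_n$ is $\omega$‑divergent, $\sqrt{m_n}\ge t$; hence $t\,d'_n\le \sqrt{m_n}\cdot\sqrt{m_n}=m_n$, and therefore
\[
\frac{\d\bigl(\alpha(t\,d'_n),\alpha_n(t\,d'_n)\bigr)}{d'_n}\ \le\ \frac{t\,d'_n/m_n}{d'_n}\ =\ \frac{t}{m_n},
\]
whose $\omega$‑limit is $0$. This says $\bigl(\alpha_n(t\,d'_n)\bigr)=\bigl(\alpha(t\,d'_n)\bigr)$ in $\conn\bigl(X,(x_0),d'\bigr)$, i.e. $\Psi^\omega_{d'}\bigl(t,(\alpha_n)\bigr)=\Psi_{d'}(t,\alpha)$; letting $t$ range over $(0,\infty)$ shows $\overline\Psi^\omega_{d'}(\alpha_n)=\overline\Psi_{d'}(\alpha)$ for every $d'\preceq d$, as claimed.

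The one point requiring care — the main obstacle — is the growth rate of $d$. The diagonal argument only controls $\d\bigl(\alpha(s),\alpha_n(s)\bigr)$ on the range $s\le m_n$, but $\Psi^\omega_{d'}(t,\cdot)$ evaluates rays at $t\,d'_n$ with $t$ arbitrary, so choosing $d_n=m_n$ would fail once $t>1$. Taking $d_n=\sqrt{m_n}$ (a device already used in the proof of Lemma \ref{isometric embedding2}) leaves a multiplicative gap that any fixed $t$ eventually fits inside, uniformly over all $d'\preceq d$; after that, everything is the routine convexity estimate above together with standard properties of ultralimits.
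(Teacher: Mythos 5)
Your proof is correct and follows essentially the same route as the paper's: quantify the visual convergence rate of $\alpha_n$ to $\alpha$ scale-by-scale, take the square root of that scale to define $d$, and then use convexity of $s\mapsto\d\bigl(\alpha(s),\alpha_n(s)\bigr)$ to control $\d\bigl(\alpha(td_n'),\alpha_n(td_n')\bigr)/d_n'$ for all $t$ and all $d'\preceq d$ at once. The only difference is cosmetic: the paper obtains your $m_n$ in one line as $\tilde d_n = 1/\d_C(\alpha,\alpha_n)$ using the metric $\d_C$ that induces the visual topology, whereas you rebuild it from the basis neighborhoods $U(\alpha,k,\epsilon)$ via a diagonal argument.
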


\begin{proof}
  Let $\alpha = \lim^\omega \alpha_n$ where the limit is taken in the compact space $\partial_\infty X$.  Let $\tilde d_n = \max \{t\mid \d\bigl(\alpha(t), \alpha_n(t)\bigr)\leq C\}$, for some fixed $C>0$.  In other words $\tilde d_n = \frac{1} {\d_C(\alpha,\alpha_n)}$.  Since $\alpha_n$ converges to $\alpha$, $\omega$-almost surely, the sequence $d_n = \sqrt{\tilde{d}_n}$ diverges $\omega$-almost surely.  Let $ ( d_n') = d' \preceq d= (d_n)$.  Then $t \le d_n'$ $\omega$-almost surely, so
  \begin{align*}
    \d\bigl(\overline\Psi^{\omega}_{d'} (\alpha_n)(t) , \overline\Psi_{d'} (\alpha)(t)\bigr)
    &  = \d\bigl( (\alpha_n(t d_n'), (\alpha(t d_n')\bigr) \\
    & = \lim_n{}^\omega\dfrac{\d\bigl(\alpha_n(t d_n'), \alpha(t d_n')\bigr)}{ d_n'} \\
    &\leq  \lim_n{}^\omega\dfrac{\d\bigl(\alpha_n( d_n'^2), \alpha(d_n'^2)\bigr)}{ d_n'} \\
    &\leq \lim_n{}^\omega \frac{C}{d_n'} = 0.\qedhere
  \end{align*}
\end{proof}

\begin{cor}\label{stabilize}
  Let $X$ be a proper cocompact CAT(0) space.  Then for $x,y\in \conn\bigl(X, (x_0), d\bigr)$, there exists a scaling sequence $\tilde d\preceq d $ such that $ \d\bigl(\Theta^d_{\tilde d}(x), \Theta^d_{\tilde d}(y)\bigr) = \d\bigl(\Theta^d_{ d'}(x), \Theta^d_{ d'}(y)\bigr)$ for all $d'\preceq \tilde d$.  

\end{cor}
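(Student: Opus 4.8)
The plan is to pull $x$ and $y$ back through $\Psi^\omega_d$ to a pair of (ultra)geodesic rays, replace those ultrageodesics by genuine geodesic rays of $X$ via Lemma~\ref{surjective}, and then observe that the distance between the corresponding points of $\cone(\partial_\angle X)$ is independent of the scaling sequence by Proposition~\ref{isometric embedding}.

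In detail: since $X$ is proper and cocompact, $\Psi^\omega_d$ is surjective by Proposition~\ref{contracting}, so I would write $x=\Psi^\omega_d\bigl(s,(\alpha_n)\bigr)$ and $y=\Psi^\omega_d\bigl(t,(\beta_n)\bigr)$ for some $s,t\ge 0$ and $(\alpha_n),(\beta_n)\in\partial_\angle^\omega X$, and set $\alpha=\lim^\omega\alpha_n$ and $\beta=\lim^\omega\beta_n$ in $\partial_\infty X$. Applying Lemma~\ref{surjective} to $(\alpha_n)$ and then to $(\beta_n)$ produces scaling sequences $d^{1}$ and $d^{2}$ such that $\Psi^\omega_{e}\bigl(s,(\alpha_n)\bigr)=\Psi_{e}(s,\alpha)$ for every $e\preceq d^{1}$ and $\Psi^\omega_{e}\bigl(t,(\beta_n)\bigr)=\Psi_{e}(t,\beta)$ for every $e\preceq d^{2}$. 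By Lemma~\ref{countable} the (finite) set $\{d,d^{1},d^{2}\}$ has a common lower bound $\tilde d$; in particular $\tilde d\preceq d$.

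Now for any $d'\preceq\tilde d$ we have $d'\preceq d$, $d'\preceq d^{1}$, and $d'\preceq d^{2}$, so Lemma~\ref{commutes} (whose identity $\Theta^d_{d'}\circ\Psi^\omega_d=\Psi^\omega_{d'}$ again uses properness and cocompactness) gives $\Theta^d_{d'}(x)=\Psi^\omega_{d'}\bigl(s,(\alpha_n)\bigr)=\Psi_{d'}(s,\alpha)$ and, likewise, $\Theta^d_{d'}(y)=\Psi_{d'}(t,\beta)$. Since $\Psi_{d'}$ is an isometric embedding by Proposition~\ref{isometric embedding}, the quantity $\d\bigl(\Theta^d_{d'}(x),\Theta^d_{d'}(y)\bigr)$ equals the distance between $(s,\alpha)$ and $(t,\beta)$ in $\cone(\partial_\angle X)$, which depends only on $s$, $t$, and $\angle(\alpha,\beta)$, and hence not on $d'$. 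Taking $d'=\tilde d$ gives the asserted equality for all $d'\preceq\tilde d$.

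I do not expect a genuine obstacle; the only care needed is the order bookkeeping, namely applying Lemma~\ref{surjective} separately to the two ultrageodesics and then combining the two thresholds with the original $d$ through Lemma~\ref{countable} so that $\tilde d\preceq d$. If one is content with the weaker statement that $d'\mapsto\d\bigl(\Theta^d_{d'}(x),\Theta^d_{d'}(y)\bigr)$ is eventually constant, without identifying its stable value, there is a softer route: this function is non-increasing with respect to $\preceq$ because the maps $\Theta$ compose and are $1$-Lipschitz, and choosing a sequence along which it approaches its infimum, Lemma~\ref{countable} supplies a lower bound $\tilde d$ below which it must be constant.
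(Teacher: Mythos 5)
Your proposal is correct and follows essentially the same route as the paper's own proof: pull $x$ and $y$ back through the surjection $\Psi^\omega_d$, use Lemma \ref{surjective} (together with Lemma \ref{countable} to combine the two thresholds with $d$) to replace the ultrageodesics by their $\omega$-limits below $\tilde d$, and then invoke the isometric embedding $\Psi_{d'}$ to see the distance is independent of $d'$. Your write-up just makes the order bookkeeping more explicit than the paper does.
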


\begin{proof}
   Fix $x,y\in \conn\bigl(X, (x_0), d\bigr)$.    Since $X$ is proper cocompact we can find $s,t\in \mathbb R^+$ and geodesics $\alpha_n,\beta_n$ such that $\Psi_d^\omega \bigl(s, (\alpha_n)\bigr) = x$ and $y =\Psi_d^\omega \bigl(t, (\beta_n)\bigr)$.  Let $\alpha = \lim^\omega \alpha_n$ and $\beta = \lim^\omega \beta_n$.  We can then find $\tilde d$ such that $\Theta_{\tilde d}^d (x) =\Psi^{\omega}_{\tilde d} \bigl(s,(\alpha_n)\bigr) = \Psi_{ \tilde d}\bigl(s,\lim^\omega\alpha_n\bigr)$ and $\Theta_{\tilde d}^d (y) =\Psi^{\omega}_{\tilde d} \bigl(t,(\beta_n)\bigr) = \Psi_{ \tilde d}\bigl(t,\lim^\omega\beta_n\bigr)$.  Thus for all $d'\preceq\tilde d $  \begin{align*}
     \d\bigl(\Theta^d_{\tilde d}(x), \Theta^d_{\tilde d}(y)\bigr) &= \d\bigl(\Psi_{ \tilde d}(s,\alpha), \Psi_{ \tilde d}(t,\beta)\bigr) \\ &= \d\bigl(\Psi_{ d'}(s,\alpha), \Psi_{ d'}(t,\beta)\bigr) = \d\bigl(\Theta^d_{ d'}(x), \Theta^d_{ d'}(y)\bigr).\qedhere
   \end{align*}
\end{proof}


\begin{thm}\label{direct limit}
    Let $X$ be a proper cocompact CAT(0) space.  Then $\lim\limits_{\rightarrow} \Bigl(\conn\bigl(X, (x_0), d\bigr), \Theta_{d'}^{d}\Bigr)$ is isometric to $\cone (\partial_TX)$.
Moreover, for each scaling sequence $d$ the projection map
\[\Theta_d: \conn\bigl(X, (x_0), d\bigr) \to \lim\limits_{\rightarrow} \Bigl(\conn\bigl(X, (x_0), d\bigr), \Theta_{d'}^{d}\Bigr)\]
is determined by the equation
$\Theta_d \circ \Psi^\omega_d (t, (\alpha_n)) = \Theta_d \circ \Psi_d (t, \lim\nolimits^\omega \alpha_n)$ and $\Theta_d\circ \Psi_d$ is an isometry.
\end{thm}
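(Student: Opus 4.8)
The plan is to assemble the theorem from the machinery already built up, in three stages: first identify the image of the diagonal, then show that image exhausts the direct limit, then identify the limit metric with the Euclidean cone metric.

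First I would show that the composite $\Theta_d \circ \Psi_d \colon \cone(\partial_\angle X) \to \lim\limits_{\rightarrow}\bigl(\conn(X,(x_0),d),\Theta_{d'}^d\bigr)$ is an isometric embedding. By Proposition \ref{isometric embedding} each $\Psi_{d'}$ is an isometric embedding, and by Lemma \ref{commutes} we have $\Theta_{d'}^d \circ \Psi_d^\omega = \Psi_{d'}^\omega$, so the diagram of isometric embeddings commutes with the connecting maps; hence $\Theta_d \circ \Psi_d$ at least maps into the direct limit compatibly. The content is that the limit pseudometric $\underline\d$ does not collapse distances on the diagonal. Fix $(s,\alpha),(t,\beta)$ with $\alpha,\beta\in\partial_\angle X$. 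For any $d'\preceq d$, $\Theta_{d'}^d\circ\Psi_d(s,\alpha)=\Psi_{d'}(s,\alpha)$ and likewise for $\beta$, and since $\Psi_{d'}$ is an isometric embedding the distance between the images is exactly $\d_{\cone}\bigl((s,\alpha),(t,\beta)\bigr)$ — it is the \emph{same} value for every $d'$. Therefore the infimum defining $\underline\d$ is attained at this common value, so $\underline\d$ agrees with the cone metric on the diagonal and $\Theta_d\circ\Psi_d$ is an isometry onto its image.

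Next I would prove surjectivity of $\Theta_d\circ\Psi_d$ (equivalently, that every equivalence class in the direct limit meets the image of the diagonal). Take any point of the direct limit, represented by some $x\in\conn(X,(x_0),d)$ for some scaling sequence $d$. By Proposition \ref{contracting} ($X$ proper cocompact), $x=\Psi_d^\omega\bigl(t,(\alpha_n)\bigr)$ for some $(\alpha_n)\in\partial_\angle^\omega X$ and $t\ge 0$. By Lemma \ref{surjective} there is a scaling sequence $\tilde d\preceq d$ with $\Psi^\omega_{\tilde d}\bigl(t,(\alpha_n)\bigr)=\Psi_{\tilde d}\bigl(t,\lim^\omega\alpha_n\bigr)$; combined with $\Theta^d_{\tilde d}\circ\Psi^\omega_d=\Psi^\omega_{\tilde d}$ (Lemma \ref{commutes}), the image of $x$ in $\conn(X,(x_0),\tilde d)$ lies in the diagonal. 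Since passing to $\Theta^d_{\tilde d}(x)$ does not change the equivalence class in the direct limit, the class of $x$ is the class of a diagonal point, proving surjectivity. Together with the previous step, $\Theta_d\circ\Psi_d$ is an isometry onto the whole direct limit; composing with Proposition \ref{isometric embedding}'s identification $\cone(\partial_\angle X)\cong\cone(\partial_T X)$ gives the stated isometry $\lim\limits_{\rightarrow}\bigl(\conn(X,(x_0),d),\Theta_{d'}^d\bigr)\cong\cone(\partial_T X)$.

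Finally, for the "moreover" clause I would observe that the projection $\Theta_d\colon\conn(X,(x_0),d)\to\lim\limits_{\rightarrow}$ is the canonical map to the direct limit, and its behavior on the image of $\Psi_d^\omega$ is forced: given $(t,(\alpha_n))$, Lemma \ref{surjective} provides $d'\preceq d$ with $\Psi^\omega_{d'}(t,(\alpha_n))=\Psi_{d'}(t,\lim^\omega\alpha_n)$, and since $\Theta_d=\Theta_{d'}\circ\Theta^d_{d'}$ as maps into the limit (the limit maps satisfy $f_d=f_{d'}\circ f^d_{d'}$), we get $\Theta_d\circ\Psi^\omega_d(t,(\alpha_n))=\Theta_{d'}\circ\Psi^\omega_{d'}(t,(\alpha_n))=\Theta_{d'}\circ\Psi_{d'}(t,\lim^\omega\alpha_n)$. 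Since $\Psi_{d'}$ is a restriction of $\Psi^\omega_{d'}$ to the diagonal (Definition \ref{embedding}), the right side equals $\Theta_d\circ\Psi_d(t,\lim^\omega\alpha_n)$, giving the displayed equation; and $\Theta_d\circ\Psi_d$ is an isometry by the first step. The main obstacle is the first step — verifying that the limit pseudometric genuinely restricts to the cone metric on the diagonal rather than collapsing it — but the key observation that $\Theta^d_{d'}$ restricted to diagonal geodesics is an \emph{isometry} (not merely $1$-Lipschitz), which follows from Lemmas \ref{flat sector} and \ref{commutes}, makes the infimum in the definition of $\underline\d$ constant along the directed system and resolves it cleanly.
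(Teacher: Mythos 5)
Your proposal is correct and follows essentially the same route as the paper: surjectivity of $\Theta_d\circ\Psi_d$ via Proposition \ref{contracting} and Lemma \ref{surjective}, and the isometry via Proposition \ref{isometric embedding} together with the identity $\Theta^d_{d'}\circ\Psi_d=\Psi_{d'}$ from Lemma \ref{commutes}, which makes the distances constant along the directed system on the diagonal. The only cosmetic slip is attributing the scaling sequence from Lemma \ref{surjective} as automatically satisfying $\tilde d\preceq d$; one should pass to a common lower bound via Lemma \ref{countable}, exactly as the paper's phrase ``for sufficiently small $d'$'' implicitly does.
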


\begin{proof}
    Recall that by Lemma \ref{commutes}, for every $d'\preceq d$, we have that  $\Theta^d_{d'}\circ \Psi_d = \Psi_{d'}$.   Since $\Theta _{d'}^d\circ \Psi_d^\omega \bigl(t,(\alpha_n)\bigr) =  \Psi_{d'} (t, \lim\nolimits^\omega \alpha_n)$ for sufficiently small $d'$ (Lemma \ref{surjective}),  we see that  $\Theta_d \circ \Psi^\omega_d (t, (\alpha_n)) = \Theta_d \circ \Psi_d (t, \lim\nolimits^\omega \alpha_n)$.  Thus $\Theta_d\circ \Psi_d$ is surjective.

    By Proposition \ref{isometric embedding}, $\Psi_d$ is an isometric embedding of $\cone (\partial_TX)$ into $\conn\bigl(X, (x_0), d\bigr)$, which together with the equality $\Theta^d_{d'}\circ \Psi_d = \Psi_{d'}$ shows  $\Theta^d_{d'}$ is an isometry when restricted to the image  of $\Psi_{d}$. Thus $\Theta_d\circ\Psi_d$ is an isometry from $\cone (\partial_TX)$ to $\lim\limits_{\rightarrow} \Bigl(\conn\bigl(X, (x_0), d\bigr), \Theta_{d'}^{d}\Bigr)$.
\end{proof}

\begin{rmk}
  We can now identify $\cone (\partial_TX)$ with $\lim\limits_{\rightarrow} \Bigl(\conn\bigl(X, (x_0), d\bigr), \Theta_{d'}^{d}\Bigr)$.
\end{rmk}

\begin{thm}\label{inverse limit}
    Let $X$ be a proper cocompact CAT(0) space.  Then $\lim\limits_{\leftarrow} \Bigl(\conn\bigl(X, (x_0), d\bigr), \Theta_{d'}^{d}\Bigr)$ is a complete metric space with the inverse limit metric and $\cone(\partial_\angle^\omega X)$ isometrically embeds into $\lim\limits_{\leftarrow} \Bigl(\conn\bigl(X, (x_0), d\bigr), \Theta_{d'}^{d}\Bigr)$.
\end{thm}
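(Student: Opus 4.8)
The plan is to realize $\cone(\partial_\angle^\omega X)$ inside the inverse limit coordinatewise via the maps $\Psi^\omega_d$, after first checking that the inverse limit is a genuine (finite-valued) metric space. For the latter the key point is Lemma \ref{commutes}: each connecting map $\Theta^d_{d'}$ preserves distance to the observation point, and it fixes the observation point itself, since $\Xi(x_0,\cdot)\equiv x_0$ and so the constant sequence $(x_0)$ is a point of every asymptotic cone with $\Theta^d_{d'}(x_0)=(x_0)$. Consequently, if $x=(x_d)$ is any element of the inverse limit, all of its coordinates lie at one common distance $r$ from the observation point of the respective cone, and likewise every $y=(y_d)$ has all coordinates at a common distance $s$. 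The triangle inequality inside each asymptotic cone then gives $\d(x_d,y_d)\le r+s$ uniformly in $d$, so $\overline\d(x,y)\le r+s<\infty$. Since each asymptotic cone is a complete CAT(0) space and the inverse limit of complete metric spaces is complete, this settles the first assertion.

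Next I would define $\Phi\colon\cone(\partial_\angle^\omega X)\to\lim\limits_{\leftarrow}\bigl(\conn(X,(x_0),d),\Theta^d_{d'}\bigr)$ by $\Phi\bigl(t,(\alpha_n)\bigr)=\bigl(\Psi^\omega_d(t,(\alpha_n))\bigr)_d$, the function sending a scaling sequence $d$ to $\Psi^\omega_d(t,(\alpha_n))=(\alpha_n(td_n))$. That this function actually lands in the inverse limit is exactly the identity $\Theta^d_{d'}\circ\Psi^\omega_d=\Psi^\omega_{d'}$ of Lemma \ref{commutes}, so $\Phi$ is well-defined. To see $\Phi$ is an isometric embedding, fix $p=(t,(\alpha_n))$ and $q=(s,(\beta_n))$. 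Each $\Psi^\omega_d$ is non-expansive by Proposition \ref{contracting}, so $\overline\d(\Phi(p),\Phi(q))=\sup_d\d(\Psi^\omega_d(p),\Psi^\omega_d(q))\le\d(p,q)$ immediately. For the reverse inequality I would apply Lemma \ref{isometric embedding2} to the finite (hence countable) set $C=\{(\alpha_n),(\beta_n)\}\subset\partial_\angle^\omega X$: it produces a single scaling sequence $d$ for which $\Psi^\omega_d$ restricted to $\cone(C)$ is an isometric embedding. Since $p,q\in\cone(C)$, this forces $\d(\Psi^\omega_d(p),\Psi^\omega_d(q))=\d(p,q)$, and because $d$ is one of the indices entering the supremum defining $\overline\d$, we get $\overline\d(\Phi(p),\Phi(q))\ge\d(p,q)$. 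Combining the two inequalities yields equality, so $\Phi$ is an isometric embedding.

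I do not expect a genuine obstacle: all the content has already been packaged into Lemma \ref{commutes}, Proposition \ref{contracting}, and Lemma \ref{isometric embedding2}, so the argument is essentially an assembly of these. The one step needing real care — and it is the one flagged by the remark preceding the theorem — is verifying that the inverse limit metric is finite-valued, which hinges on the connecting maps preserving distance to the observation point; once that is in hand, the isometric embedding follows formally from non-expansiveness of $\Psi^\omega_d$ together with its being isometric on cones over countable sets of directions.
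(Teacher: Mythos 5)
Your proposal is correct and follows essentially the same route as the paper: finiteness of the inverse limit metric comes from Lemma \ref{commutes} (connecting maps preserve distance to the observation point, so $\d(x^d,y^d)$ is uniformly bounded by the triangle inequality), completeness from the general fact about inverse limits of complete spaces, well-definedness of the coordinatewise map from $\Theta^d_{d'}\circ\Psi^\omega_d=\Psi^\omega_{d'}$, and the isometric embedding from non-expansiveness of each $\Psi^\omega_d$ combined with Lemma \ref{isometric embedding2} applied to the pair $\{(\alpha_n),(\beta_n)\}$. Your write-up actually spells out the two inequalities for the supremum more explicitly than the paper does, but the content is identical.
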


\begin{proof}
    Let $x,y \in \lim\limits_{\leftarrow} \Bigl(\conn\bigl(X, (x_0), d\bigr), \Theta_{d'}^{d}\Bigr)$.  This means for each scaling sequence $d$ we have $x^d ,y^d \in \conn\bigl(X, (x_0), d\bigr)$ such that $\Theta_{d'}^{d}(x^d)=x^{d'}$ and $\Theta_{d'}^{d}(y^d)=y^{d'}$ for all $d'\preceq d$.  The function assigning to each scaling sequence $d$ the real number $\d(x^d, y^d)$  is an increasing function of $d$ that is bounded by the constant $ \d\bigr(x^d, (x_0)\bigr)+ \d\bigr(y^d, (x_0)\bigr)$, which is independent of $d$ by Lemma \ref{commutes}.  Thus $\sup\limits_{d} \d(x^d, y^d)$ is finite and we can define a metric (not just an extended metric) on $\lim\limits_{\leftarrow} \Bigl(\conn\bigl(X, (x_0), d\bigr), \Theta_{d'}^{d}\Bigr)$ by $\rho\bigl((x^d), (y^d)\bigr) = \sup\limits_{d} \d(x^d, y^d)$.  It is easy to verify that $\rho$ does define a complete metric.  Lemma \ref{commutes} implies that the functions $\Psi_d^\omega$ induce a well-defined map into the inverse limit that is an isometric embedding by Lemma \ref{isometric embedding2}.
\end{proof}

\section{Visual boundary}\label{visual section}

We have seen that $\Psi_d^\omega : \partial_\angle^\omega X \to \conn\bigl(X, (x_0), d\bigr)$ gives a parametrization of $\conn\bigl(X, (x_0), d\bigr)$ which converges to $\cone(\partial_T X)$ as we allow $d$ to decrease.  Thus $\bigl\{ \conn\bigl(X, (x_0), d\bigr), \Theta_{d'}^d\bigr\}_{d,d'}$ completely determines the Tits boundary $\partial_T(X)$.  We now wish to understand the visual boundary $\partial_\infty X$ in terms of $\bigl\{ \conn\bigl(X, (x_0), d\bigr), \Theta_{d'}^d\bigr\}_{d,d'}$.

\begin{lem}\label{converse}
Let $X$ be a proper CAT(0) space and $(\alpha_n)$ a sequence in $\partial X$.  If $\Psi_d^\omega \bigl(t_0,(\alpha_{n})\bigr) = \Psi_d(t_0,\alpha_0)$ for some scaling sequence $d$, some $t_0>0$ and some $\alpha_0 \in \partial X$, then $\lim^\omega \alpha_n  = \alpha_0$.
\end{lem}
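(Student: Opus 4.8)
The plan is to pass to the metric $\d_C$ on $\partial X$ (for any fixed $C>0$), which for $X$ proper induces the visual topology on $\partial X = \partial_\infty X$ by the lemma above; since $\partial_\infty X$ is then metrizable, $\lim\nolimits^\omega \alpha_n = \alpha_0$ holds in $\partial_\infty X$ if and only if $\lim\nolimits^\omega \d_C(\alpha_n,\alpha_0) = 0$, so it suffices to prove the latter.

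First I would unwind the hypothesis. By Definition \ref{embedding} we have $\Psi_d^\omega\bigl(t_0,(\alpha_n)\bigr) = \bigl(\alpha_n(t_0 d_n)\bigr)$ and $\Psi_d(t_0,\alpha_0) = \bigl(\alpha_0(t_0 d_n)\bigr)$, so the assumed equality in $\conn\bigl(X,(x_0),d\bigr)$ says exactly that, writing $D_n := \d\bigl(\alpha_n(t_0 d_n),\alpha_0(t_0 d_n)\bigr)$,
\[ \lim\nolimits^\omega \frac{D_n}{d_n} = 0. \]

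Next I would propagate this smallness inward toward the basepoint. Since $\alpha_n$ and $\alpha_0$ are geodesic rays issuing from the common point $x_0$, convexity of the metric in a $\mathrm{CAT}(0)$ space \cite[Proposition II.2.2]{BridHae99} makes $t \mapsto \d\bigl(\alpha_n(t),\alpha_0(t)\bigr)$ a convex function with value $0$ at $t=0$; hence $t \mapsto \d\bigl(\alpha_n(t),\alpha_0(t)\bigr)/t$ is non-decreasing, so that
\[ \d\bigl(\alpha_n(t),\alpha_0(t)\bigr) \le \frac{t}{t_0 d_n}\,D_n \qquad (0 < t \le t_0 d_n). \]
In particular $\d\bigl(\alpha_n(t),\alpha_0(t)\bigr) \le C$ for every $t \le \min\bigl\{\, t_0 d_n,\ C t_0 d_n / D_n \,\bigr\}$, where the second entry is read as $+\infty$ when $D_n = 0$; consequently $\d_C(\alpha_n,\alpha_0) \le 1 \big/ \min\bigl\{\, t_0 d_n,\ C t_0 d_n / D_n \,\bigr\}$.

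Finally I would take the ultralimit of this inequality. Since $d$ is $\omega$-divergent, $\lim\nolimits^\omega t_0 d_n = \infty$; and since $D_n/d_n \ge 0$ with $\lim\nolimits^\omega D_n/d_n = 0$, we get $\lim\nolimits^\omega C t_0 d_n/D_n = \lim\nolimits^\omega C t_0 \big/ (D_n/d_n) = \infty$. A minimum of two nonnegative sequences each having ultralimit $+\infty$ again has ultralimit $+\infty$, so squeezing gives $\lim\nolimits^\omega \d_C(\alpha_n,\alpha_0) = 0$, which is what we wanted. I do not anticipate a genuine obstacle; the only point requiring care is the ultrafilter bookkeeping — that the conditions ``$t_0 d_n$ large'' and ``$D_n/d_n$ small'' each hold on an $\omega$-large set and hence simultaneously on their (still $\omega$-large) intersection — together with the degenerate case $D_n = 0$, which is either absorbed into the $+\infty$ convention or dispatched directly since then $\d_C(\alpha_n,\alpha_0) \le 1/(t_0 d_n)$.
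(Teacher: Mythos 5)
Your proposal is correct and follows essentially the same route as the paper: unwind the hypothesis to $\lim^\omega \d\bigl(\alpha_n(t_0 d_n),\alpha_0(t_0 d_n)\bigr)/d_n = 0$, use CAT(0) convexity of $t \mapsto \d\bigl(\alpha_n(t),\alpha_0(t)\bigr)$ to propagate the bound inward, and conclude $\lim^\omega \d_C(\alpha_n,\alpha_0) = 0$ by intersecting the relevant $\omega$-large sets. The only difference is cosmetic: the paper fixes $\epsilon$ and exhibits the $\omega$-large set on which $\d_C(\alpha_n,\alpha_0)\le\epsilon$, whereas you package the same estimate as an explicit upper bound on $\d_C$ and take its ultralimit.
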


\begin{proof}
Suppose that $\Psi_d^\omega \bigl(t_0,(\alpha_{n})\bigr) = \Psi_d(t_0,\alpha_0)$ for some $t_0>0$.  Then $$\lim\limits_n{}^\omega \dfrac{\d\bigl(\alpha_n(t_0d_n), \alpha_0(t_0d_n)\bigr)}{d_n} = 0.$$  Fix $C, \epsilon>0$.  Let $A$ be the $\omega$-large set such that $\d\bigl(\alpha_n(t_0d_n), \alpha_0(t_0d_n)\bigr) \le C \epsilon t_0 d_n$ for all $n \in A$.  Then the CAT(0) inequality implies that $\d\bigl(\alpha_n(\frac{1}{\epsilon}), \alpha_0(\frac{1}{\epsilon})\bigr) \le C$, and therefore $\d_C(\alpha_n, \alpha_0) \le \epsilon$, for all $n\in A\cap B$ where $B$ is the $\omega$-large set such that $d_n \ge \frac{1}{\epsilon t_0}$ for all $n\in B$.  Thus $\lim^\omega\alpha_n = \alpha_0$.
\end{proof}

\begin{thm}\label{visual recovery}
    Let $X$ be a proper CAT(0) space.  For a sequence $(\alpha_n)$ in $\partial X$, $\alpha_n$ converges to $\alpha_0$ in the visual boundary of $X$ if and only if for every bijection $\sigma: \mathbb N \to \mathbb N$, there exists a scaling sequence $d$ such that $\Psi_d^\omega \bigl(t,(\alpha_{\sigma(n)})\bigr) = \Psi_d(t,\alpha_0)$ for all $t$.
\end{thm}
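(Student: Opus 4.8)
The plan is to prove the two implications separately. The forward direction follows almost immediately from Lemma~\ref{surjective}: if $\alpha_n\to\alpha_0$ in $\partial_\infty X$ and $\sigma\colon\mathbb N\to\mathbb N$ is any bijection, then rearranging by $\sigma$ does not change the ultralimit, because for each neighborhood $V$ of $\alpha_0$ in $\overline X$ the set $\{n\mid\alpha_n\in V\}$ is cofinite, hence so is $\{n\mid\alpha_{\sigma(n)}\in V\}=\sigma^{-1}\{n\mid\alpha_n\in V\}$, and cofinite sets are $\omega$-large; thus $\lim^\omega\alpha_{\sigma(n)}=\alpha_0$. Applying Lemma~\ref{surjective} to the sequence $(\alpha_{\sigma(n)})$ then yields a scaling sequence $d$ with $\Psi^\omega_{d'}\bigl(t,(\alpha_{\sigma(n)})\bigr)=\Psi_{d'}(t,\alpha_0)$ for all $t$ and all $d'\preceq d$; taking $d'=d$ gives exactly the asserted identity.

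For the converse I would argue by contradiction: assume the displayed condition holds for every bijection but $\alpha_n\not\to\alpha_0$ in $\partial_\infty X$. Then there are an open set $\widetilde U\subset\overline X$ containing $\alpha_0$ and an infinite set $S\subset\mathbb N$ with $\alpha_n\notin\widetilde U$ for every $n\in S$. The crucial step is to produce a bijection $\sigma\colon\mathbb N\to\mathbb N$ for which $\sigma^{-1}(S)$ is $\omega$-large. If $S$ is cofinite, $\sigma=\mathrm{id}$ works. Otherwise $S$ and $\mathbb N\setminus S$ are both countably infinite; fixing any partition of $\mathbb N$ into two infinite pieces, finite additivity of $\omega$ forces exactly one of them --- call it $L$ --- to be $\omega$-large, and this $L$ is automatically co-infinite, so I can take $\sigma$ to restrict to a bijection $L\to S$ and to a bijection $(\mathbb N\setminus L)\to(\mathbb N\setminus S)$; then $\sigma^{-1}(S)\supseteq L$ is $\omega$-large.

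With such a $\sigma$ fixed, the set $\{n\mid\alpha_{\sigma(n)}\in\overline X\setminus\widetilde U\}$ contains $\sigma^{-1}(S)$ and so is $\omega$-large; since $\overline X$ is compact and $\overline X\setminus\widetilde U$ is closed, the ultralimit $\lim^\omega\alpha_{\sigma(n)}$ exists and lies in $\overline X\setminus\widetilde U$, and in particular it is not $\alpha_0$. On the other hand, applying the hypothesis to this $\sigma$ produces a scaling sequence $d$ with $\Psi^\omega_d\bigl(1,(\alpha_{\sigma(n)})\bigr)=\Psi_d(1,\alpha_0)$, whereupon Lemma~\ref{converse} forces $\lim^\omega\alpha_{\sigma(n)}=\alpha_0$ --- a contradiction. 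The one genuinely delicate point is the construction of $\sigma$; the quantifier over all bijections in the statement is exactly what converts the \emph{infinitude} of $\{n\mid\alpha_n\notin\widetilde U\}$, which is all that non-convergence provides, into the \emph{$\omega$-largeness} required to invoke Lemma~\ref{converse}, since an infinite set need not be $\omega$-large.
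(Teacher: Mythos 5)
Your proof is correct and follows essentially the same route as the paper: the forward direction is Lemma~\ref{surjective} applied after noting that rearrangement preserves the ultralimit, and the converse hinges on the same key construction of a bijection carrying an $\omega$-large set onto the indices of a ``bad'' subsequence so that Lemma~\ref{converse} applies. The only (cosmetic) difference is that you extract the contradiction directly from a neighborhood of $\alpha_0$ and the closedness of its complement in the compact space $\overline X$, whereas the paper shows every convergent subsequence has limit $\alpha_0$ and then invokes compactness; your organization slightly streamlines that last step.
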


Notice that $\Psi_d^\omega \bigl(t,(\alpha_{\sigma(n)})\bigr) = \Psi_d(t,\alpha_0)$ for all $t$ is equivalent to $\Theta_d\circ \Psi_d^\omega \bigl(t,(\alpha_{\sigma(n)})\bigr) = (t,\alpha_0)$ for all $t$.

\begin{proof}
    If $\alpha_n$ converges to $\alpha_0$ in the visual boundary, then $\lim^\omega \alpha_{\sigma(n)} = \alpha_0$ for all bijections $\sigma$ and the forward implication then follows from Lemma \ref{surjective}.  Thus we need only show that if for every bijection $\sigma: \mathbb N \to \mathbb N$, there exists a scaling sequence $d$ such that $\Psi_d^\omega \bigl(t, (\alpha_{\sigma(n)})\bigr) = \Psi_d(t,\alpha_0)$ for all $t$, then $\alpha_n$ converges to $\alpha_0$ in the visual boundary.

Suppose that there exists a subsequence $n_i$ such that $\alpha_{n_i}$ converges to $\beta$.  We may assume that $B= \{n_i\}$ has infinite complement in $\mathbb N$.  Let $A$ be an $\omega$-large subset of $\mathbb N$ with infinite complement.  Let $\sigma: \mathbb N \to \mathbb N$ be a bijection sending $B$ to $A$.

Then $\lim^\omega \alpha_{\sigma(n)} = \beta$.  By hypothesis, there also exists a scaling sequence $d$ such that $\Psi_d^\omega \bigl(t,(\alpha_{\sigma(n)})\bigr) = \Psi_d(t,\alpha_0)$ for all $t$ which by Lemma \ref{converse} implies that $\lim^\omega \alpha_{\sigma(n)} = \alpha_0$.  Thus $\alpha_0 = \beta$.

It is then an exercise to use the compactness of $\partial_\infty X$ to show that $\alpha_n$ converges to $\alpha_0$, if every convergent subsequence of $\alpha_n$ converges to $\alpha_0$.
\end{proof}

\section{Quasi-isometries}\label{quasi-isometries}

A map $f: X\to Y$ of metric spaces is a \emph{$(\lambda,C)$-quasi-isometric embedding} if
$$\frac{1}{\lambda}\d(x,y) - C\leq \d\bigl(f(x),f(y)\bigr)\leq \lambda\d(x,y) + C \qquad \textnormal{ for every } x,y\in X$$
and is \emph{$C$-quasi-surjective} if $Y \subset \mathcal N_C\bigl(\im(f)\bigr)$, i.e., the open $C$-neighborhood of $\im(f)$ is all of $Y$.
A \emph{$(\lambda,C)$-quasi-isometry} is a $C$-quasi-surjective $(\lambda,C)$-quasi-isometric embedding.  A \emph{$C$-quasi-inverse} of a map $f: X\to Y$ is a map $g:Y\to X$ such that $f\circ g $ is $C$-close to the identity on $Y$ and $g\circ f$ is $C$-close to the identity on $X$.
It is a standard exercise to show that every quasi-isometry $f: X\to Y$ admits a
quasi-inverse $g: Y\to X$, which is itself a quasi-isometry.  A \emph{quasi-geodesic ray} in $Y$ is a quasi-isometric embedding of $\mathbb R^+$ into $Y$.

\begin{defn}
    Let $\omega$  be a nonprincipal ultrafilter on $\mathbb N$ and $d = (d_n)$ be an $\omega$-divergent sequence of positive real numbers.  Suppose that $f:X\to Y$ is a $(\lambda, C)$-quasi-isometric embedding.   Then $f$ naturally induces a $\lambda$-Lipschitz map $f^\omega :\conn\bigl(X, (x_0), d\bigr) \to \conn\bigl(Y, \bigl(f(x_0)\bigr), d\bigr)$, defined by $f^\omega\bigl((x_n) \bigr) = \bigl(f(x_n)\bigr)$.  If $f$ is a quasi-isometry, $f^\omega$ is bi-Lipschitz.

    On the other hand, a quasi-isometric embedding (or even a quasi-isometry) $f:X\to Y$ of proper CAT(0) spaces does not induce a canonical map from $\partial X$ to $\partial Y$.  The reason is that if $\alpha:\mathbb R^+\to X $ is a geodesic ray, then $f\circ\alpha$ is a quasi-geodesic ray, which in general will not have a unique limit point in $\partial_\infty Y$ (see Proposition \ref{non-stabilize}).  However, since $Y$ is proper, the pointwise limit $f^d(\alpha) = \lim^\omega[f\circ\alpha(0), f\circ\alpha(d_n)]$ is a geodesic ray in $Y$.  This defines a map $f^d: \partial X \to \partial Y$.  In general, $ f^d$ will depend on both $\omega$ and the scaling sequence $d$.
\end{defn}

\begin{rmk}
  The functions $\Theta_d, \Theta^d_{d'}, \Psi_d, \Psi^\omega_d$ are defined for all CAT(0) spaces and have domains and ranges which depend on the chosen CAT(0) space.  In most cases we will allow the chosen CAT(0) space to change without changing our notation for the functions $\Theta_d, \Theta^d_{d'}, \Psi_d, \Psi^\omega_d$.   \emph{When considering a quasi-isometric embedding  $f: X\to Y$, we will always assume that the fixed basepoint in $Y$ used to define the maps $\Theta_d, \Theta^d_{d'}, \Psi_d, \Psi^\omega_d$ is the image of the fixed basepoint in $X$.}
\end{rmk}

We will now relate the functions $\Psi_d, \Theta_d,$ and $ f^d$.

%
%
%
%
%

\begin{lem} \label{lem - relations}
    Let $f:X\to Y$ be a quasi-isometric embedding of a proper CAT(0) spaces, $\omega$ a nonprincipal ultrafilter on $\mathbb N$, and $d = (d_n)$ an $\omega$-divergent sequence of positive real numbers.  For every $\alpha\in \partial X$ there exists a scaling sequence $\tilde d = (\tilde d_n)$ such that for all $d' \preceq \tilde d$,
\stepcounter{thm}
    \begin{equation}\Theta^d_{d'}\circ f^\omega\circ\Psi_d(1,\alpha) = \Psi_{d'}\bigl(1, f^{d}(\alpha)\bigr).\end{equation}
    In particular,
\stepcounter{thm}
    \begin{equation}\Theta_{d}\circ f^\omega\circ\Psi_d(1,\alpha) = \bigl(1, f^{d}(\alpha)\bigr).\end{equation}
\end{lem}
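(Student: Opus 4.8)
The plan is to rewrite both sides of the first identity as explicit sequences in an asymptotic cone of $Y$ and then choose $\tilde d$ by a diagonal argument of the kind used in the proof of Lemma~\ref{surjective}; the second identity will then follow formally. To set things up, write $L_n=\d\bigl(f(x_0),f(\alpha(d_n))\bigr)$ and let $\sigma_n\colon[0,d_n]\to Y$ be the constant-speed parametrization of the geodesic $[f(x_0),f(\alpha(d_n))]$ with $\sigma_n(0)=f(x_0)$ and $\sigma_n(d_n)=f(\alpha(d_n))$, so that, by definition, $f^{d}(\alpha)(t)=\lim^{\omega}\sigma_n(t)$. Because $f$ is a $(\lambda,C)$-quasi-isometric embedding and $d_n\to\infty$ $\omega$-almost surely, the speeds $L_n/d_n$ lie $\omega$-almost surely in a fixed compact subinterval of $(0,\infty)$ and $L_n\to\infty$ $\omega$-almost surely; in particular, by properness of $Y$, $f^{d}(\alpha)$ is a genuine constant-speed geodesic ray based at $f(x_0)$. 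Now $f^{\omega}\circ\Psi_d(1,\alpha)$ is represented by $\bigl(f(\alpha(d_n))\bigr)$, so for $d'\preceq d$ the point $\Theta^{d}_{d'}\circ f^{\omega}\circ\Psi_d(1,\alpha)$ is represented in $\conn\bigl(Y,(f(x_0)),d'\bigr)$ by $\bigl(\Xi\bigl(f(\alpha(d_n)),\tfrac{d_n'}{d_n}L_n\bigr)\bigr)=\bigl(\sigma_n(d_n')\bigr)$ (using $d_n'\le d_n$ $\omega$-almost surely), while $\Psi_{d'}(1,f^{d}(\alpha))$ is represented by $\bigl(f^{d}(\alpha)(d_n')\bigr)$. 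Hence the first identity is equivalent to the assertion
\[\lim\nolimits^{\omega}\ \tfrac{1}{d_n'}\,\d\bigl(\sigma_n(d_n'),\,f^{d}(\alpha)(d_n')\bigr)=0\qquad\text{for every }d'\preceq\tilde d,\]
and everything comes down to choosing $\tilde d$.

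To produce $\tilde d$, set $\tau_n=\sup\bigl\{\,T\in[0,d_n]\ \bigm|\ \d\bigl(\sigma_n(T),f^{d}(\alpha)(T)\bigr)\le1\,\bigr\}$. Since $\sigma_n$ and $f^{d}(\alpha)$ are geodesics issuing from the common point $f(x_0)$, convexity of the metric in the CAT(0) space $Y$ makes $T\mapsto\d\bigl(\sigma_n(T),f^{d}(\alpha)(T)\bigr)$ a convex function vanishing at $0$, so $\d\bigl(\sigma_n(T),f^{d}(\alpha)(T)\bigr)\le T/\tau_n$ for all $T\in[0,\tau_n]$. For each fixed $K\in\mathbb N$ we have $d_n\ge K$ and $\d\bigl(\sigma_n(K),f^{d}(\alpha)(K)\bigr)\le1$ $\omega$-almost surely (the latter because $f^{d}(\alpha)(K)=\lim^{\omega}\sigma_n(K)$), whence $\tau_n\ge K$ $\omega$-almost surely; thus $\tau_n\to\infty$ $\omega$-almost surely. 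Put $\tilde d_n=\min\{\,1+\sqrt{\tau_n},\,d_n\,\}$; then $\tilde d=(\tilde d_n)$ is a scaling sequence with $\tilde d\preceq d$.

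For any $d'=(d_n')\preceq\tilde d$ we then have $d_n'\le\tilde d_n$ $\omega$-almost surely, hence $\omega$-almost surely $d_n'\le\tau_n$ and $d_n'/\tau_n\to0$, and the convexity bound gives $\tfrac{1}{d_n'}\d\bigl(\sigma_n(d_n'),f^{d}(\alpha)(d_n')\bigr)\le1/\tau_n\to0$ $\omega$-almost surely; this is the displayed assertion, proving the first identity. The second identity follows by applying the direct-limit projection $\Theta_{d'}$ and using $\Theta_d=\Theta_{d'}\circ\Theta^{d}_{d'}$: this turns the first identity into $\Theta_d\circ f^{\omega}\circ\Psi_d(1,\alpha)=\Theta_{d'}\circ\Psi_{d'}(1,f^{d}(\alpha))$ for every $d'\preceq\tilde d$, and the right-hand side --- independent of such $d'$ by the first identity and the composition law for connecting maps --- is by definition the point $(1,f^{d}(\alpha))$ of the direct limit (equal to it under the identification of Theorem~\ref{direct limit} when $Y$ is moreover cocompact). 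The one substantive step is the diagonal construction of $\tilde d$ together with the fact that $\tau_n\to\infty$ $\omega$-almost surely; the restriction to sufficiently slowly growing $d'$ is genuinely needed, since the straightened segments $\sigma_n$ only $\omega$-approximate the limit ray $f^{d}(\alpha)$ on bounded scales, so $d_n'$ must be kept small relative to the reach $\tau_n$ of that approximation.
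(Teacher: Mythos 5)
Your proof is correct and follows essentially the same route as the paper's: both rewrite the left-hand side as the sequence of points $\bigl(\sigma_n(d_n')\bigr)$ on the straightened segments $[f(x_0),f(\alpha(d_n))]$, take $\tilde d_n$ to be (roughly) the square root of the largest parameter at which those segments stay within distance $1$ of the limit ray $f^d(\alpha)$, and invoke convexity of $T\mapsto\d\bigl(\sigma_n(T),f^d(\alpha)(T)\bigr)$ to conclude that the rescaled discrepancy vanishes for all $d'\preceq\tilde d$. Your version is slightly more careful than the paper's in verifying that $\tau_n$ is $\omega$-divergent and in ensuring $\tilde d\preceq d$, but these are refinements of the same argument rather than a different approach.
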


\begin{proof}
    Let $f:X\to Y$ be a quasi-isometric embedding of proper CAT(0) spaces, $\omega$ a nonprincipal ultrafilter on $\mathbb N$, and $d = (d_n)$ an $\omega$-divergent sequence of positive real numbers.  Fix $\alpha\in \partial X$ and let $\gamma_n$ be the geodesic from $f\circ\alpha(0)$ to $f\circ\alpha(d_n)$.  Let $\tilde d_n=\max \{t\mid \d\bigl( f^d(\alpha)(t^2), \gamma_n(t^2)\bigr)\leq 1\}$.  Notice that $\tilde d_n$ is $\omega$-divergent since $\gamma_n$ converges to $ f^d(\alpha)$ by definition.

For $d'= (d_n') \preceq (\tilde d_n)$,  we have
     \begin{align*}
       \Theta^d_{d'}\circ f^\omega\circ\Psi_d(1,\alpha)
       & = \Theta^d_{d'}\bigl(f\circ\alpha(d_n)\bigr)
          = \Theta^d_{d'}\bigl(\gamma_n(d_n)) \\
       & = \bigl(\gamma_n(d_n')\bigr)
       = \bigl( f^d(\alpha)(d_n')\bigr)
       = \Psi_{d'}\bigl(1, f^{d}(\alpha)\bigr).\qedhere
     \end{align*}
\end{proof}

\begin{prop}
    Let $f:X\to Y$ be a $(L,C)$-quasi-isometric embedding of a proper CAT(0) spaces, $\omega$ a nonprincipal ultrafilter on $\mathbb N$, and $d = (d_n)$ an $\omega$-divergent sequence of positive real numbers.
    The function $ f^d$ is $L$-Lipschitz continuous when the boundaries are endowed with the Tits or angle metric.
\end{prop}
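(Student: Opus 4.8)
The plan is to reduce the whole statement to the single \emph{chordal estimate}
\[
2\sin\!\Bigl(\tfrac12\,\angle\bigl(f^d\alpha,f^d\beta\bigr)\Bigr)\ \le\ L\cdot 2\sin\!\Bigl(\tfrac12\,\angle(\alpha,\beta)\Bigr)\qquad\text{for all }\alpha,\beta\in\partial X.
\]
Write $\rho(\mu,\nu)$ for the metric that $\partial X$ (or $\partial Y$) carries as the unit sphere $\{1\}\times\partial_T(\cdot)$ of its Euclidean cone, i.e.\ $\rho(\mu,\nu)=\d\bigl((1,\mu),(1,\nu)\bigr)$ computed in $\cone(\partial_T(\cdot))$; by Definition \ref{cones} one has $\rho(\mu,\nu)=2\sin\bigl(\tfrac12\min\{\pi,\angle(\mu,\nu)\}\bigr)=2\sin\bigl(\tfrac12\angle(\mu,\nu)\bigr)$. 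Thus the chordal estimate is precisely the assertion that $f^d\colon(\partial X,\rho)\to(\partial Y,\rho)$ is $L$-Lipschitz. We may and do assume $L\ge 1$.

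To prove the chordal estimate, fix $\alpha,\beta\in\partial X$. Since $\Psi_d$ is an isometric embedding (Proposition \ref{isometric embedding}), $\rho(\alpha,\beta)=\d\bigl(\Psi_d(1,\alpha),\Psi_d(1,\beta)\bigr)$ inside $\conn\bigl(X,(x_0),d\bigr)$. Applying the $L$-Lipschitz map $f^\omega$ and then a connecting map $\Theta^d_{d'}$, which is non-expansive by Lemma \ref{commutes}, multiplies this distance by at most $L$. By Lemma \ref{lem - relations} there is a scaling sequence $\tilde d_\alpha$ with $\Theta^d_{d'}\circ f^\omega\circ\Psi_d(1,\alpha)=\Psi_{d'}(1,f^d\alpha)$ for every $d'\preceq\tilde d_\alpha$, and similarly a $\tilde d_\beta$ for $\beta$. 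Using Lemma \ref{countable} to pick a scaling sequence $d'$ below both $\tilde d_\alpha$ and $\tilde d_\beta$, and using that $\Psi_{d'}$ is again an isometric embedding (Proposition \ref{isometric embedding}, applied to $Y$), we obtain
\[
\rho\bigl(f^d\alpha,f^d\beta\bigr)=\d\bigl(\Psi_{d'}(1,f^d\alpha),\Psi_{d'}(1,f^d\beta)\bigr)=\d\bigl(\Theta^d_{d'}f^\omega\Psi_d(1,\alpha),\Theta^d_{d'}f^\omega\Psi_d(1,\beta)\bigr)\le L\,\rho(\alpha,\beta),
\]
which is the chordal estimate.

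Next I would promote this to the Tits and angle metrics. The metrics $\rho$ and $\angle$ agree to first order, since $2\sin(\tfrac12 s)/s\to 1$ as $s\to 0$; hence a curve in $\partial X$ (resp.\ $\partial Y$) is $\rho$-rectifiable if and only if it is $\angle$-rectifiable, with equal $\rho$-length and $\angle$-length, so the length metric induced by $\rho$ on each boundary is exactly $\d_T$. An $L$-Lipschitz map multiplies $\rho$-lengths of curves by at most $L$, so $f^d$ multiplies Tits lengths by at most $L$; applying this to a path in $\partial_T X$ from $\alpha$ to $\beta$ whose length approximates $\d_T(\alpha,\beta)$ (the case $\d_T(\alpha,\beta)=\infty$ being vacuous), and letting the approximation error tend to $0$, yields $\d_T(f^d\alpha,f^d\beta)\le L\,\d_T(\alpha,\beta)$. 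For the angle metric: if $\angle(\alpha,\beta)<\pi$ then, because $\partial_\angle X$ is $\mathrm{CAT}(1)$, $\d_T(\alpha,\beta)=\angle(\alpha,\beta)$, so $\angle(f^d\alpha,f^d\beta)\le\d_T(f^d\alpha,f^d\beta)\le L\,\d_T(\alpha,\beta)=L\,\angle(\alpha,\beta)$; and if $\angle(\alpha,\beta)=\pi$ then $\angle(f^d\alpha,f^d\beta)\le\pi\le L\,\angle(\alpha,\beta)$. Hence $f^d$ is $L$-Lipschitz in both metrics.

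The step I expect to be the main obstacle is extracting the bound with the \emph{sharp} constant $L$ from the chordal estimate. Transporting the $L$-Lipschitz map $\Theta^d_{d'}\circ f^\omega\circ\Psi_d$ directly to a map of Euclidean cones and reading off comparison angles at the cone point only gives $\angle(f^d\alpha,f^d\beta)\le 2\arcsin\bigl(\min\{1,L\sin(\tfrac12\angle(\alpha,\beta))\}\bigr)$, which genuinely exceeds $L\,\angle(\alpha,\beta)$ in some configurations (for instance when $L$ is slightly larger than $1$ and $\angle(\alpha,\beta)$ is chosen so that $L\sin(\tfrac12\angle(\alpha,\beta))$ is just above $1$). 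The way around this is to use that the Tits metric is a \emph{length} metric, so on $\partial_T X$ only the infinitesimal Lipschitz constant of $f^d$ matters, and that constant is exactly $L$; the $\mathrm{CAT}(1)$ geometry of $\partial_\angle X$ then carries the estimate back to the angle metric. A secondary technical point, supplied by Lemma \ref{countable}, is that Lemma \ref{lem - relations} must be applied at a single scaling sequence $d'$ that works for $\alpha$ and $\beta$ simultaneously.
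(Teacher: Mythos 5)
Your proposal is correct and follows essentially the same route as the paper: the identical chain $\Psi_d \to f^\omega \to \Theta^d_{d'}$ combined with Lemma \ref{lem - relations} yields the same chordal bound $2\sin(\tfrac12\angle(f^d\alpha,f^d\beta)) \le 2L\sin(\tfrac12\angle(\alpha,\beta))$, and the upgrade to the sharp constant $L$ is in both cases achieved by exploiting that this estimate is sharp only infinitesimally and that the Tits metric is the induced length metric (the paper phrases this as an $L(1+\epsilon)$ local bound globalized by the triangle inequality, going angle $\to$ Tits, while you go Tits $\to$ angle via CAT(1); the substance is the same). Your treatment is in fact slightly more careful than the paper's on two points: choosing a single $d'$ valid for both $\alpha$ and $\beta$ via Lemma \ref{countable}, and handling the case $\angle(\alpha,\beta)=\pi$ with $\d_T(\alpha,\beta)=\infty$ explicitly.
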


\begin{proof}
First observe that for all $\alpha, \beta \in \partial X$ and any $\omega$-divergent sequence $d'= (d_n')$,
\begin{align*}
    2\sin\left (\dfrac{\angle(\alpha,\beta)}{2}\right)
 &= \lim\limits_{t\to\infty} \dfrac{\d\bigl(\alpha(t),\beta(t)\bigr)}{t} \\
 &= \lim\limits_{n}{}^{\omega} \dfrac{\d\bigl(\alpha(d_n'),\beta(d_n')\bigr)}{d_n'}
  =  \d\bigl(\Psi_{d'}(1, \alpha), \Psi_{d'}(1,\beta)\bigr).
\end{align*}
Fix $\alpha, \beta \in \partial X$ and any $\omega$-divergent sequence $\tilde d$  satisfying Lemma \ref{lem - relations}.  We have
  \begin{align*}
  2\sin\left (\dfrac{\angle\bigl(f^d(\alpha),f^d(\beta)\bigr)}{2}\right)
  &= \d\bigl(\Psi_{\tilde d}\bigl(1, f^{d}(\alpha)\bigr), \Psi_{\tilde d}\bigl(1, f^{d}(\beta)\bigr)\bigr) \\
  & = \d\bigl(\Theta^d_{d'}\circ f^\omega\circ\Psi_d(1, \alpha), \Theta^d_{d'}\circ f^\omega\circ\Psi_d(1,\beta)\bigr)\\
  & \leq L \d\bigl(\Psi_d(1, \alpha), \Psi_d(1,\beta)\bigr) \\
  & = 2L\sin\left (\dfrac{\angle(\alpha,\beta)}{2}\right).
    \end{align*}
Thus for every $\epsilon > 0$, we find $\angle\bigl(f^d(\alpha),f^d(\beta)\bigr) \le L (1 + \epsilon) \angle(\alpha,\beta)$ for all sufficiently close $\alpha, \beta \in \partial_\angle X$.  Hence $f^d$ is $L (1 + \epsilon)$-Lipschitz by the triangle inequality.  It follows that $f^d$ is $L$-Lipschitz on $\partial_\angle X$.  Since the angle and Tits metrics are equal wherever either has value $< \pi$, $f^d$ is $L$-Lipschitz on $\partial_T X$, too.
\end{proof}

In general $ f^d$ will not define a continuous function on the visual boundary, as illustrated by the following example.

\begin{exa}\label{example}
    Let $G$ be the group introduced by Croke and Kleiner in \cite{CrokeKleiner00} that acts properly cocompactly on CAT(0) spaces $X$ and $Y$, which have non-homeomorphic visual boundaries.  Then there exist $G$-equivariant quasi-isometries $f: X\to Y $ and $g: Y\to X$ such that both $f\circ g$ and $g\circ f$ are uniformly close to the identity map.  Since $G$ has rank one, the set of Morse geodesics of both $\partial_\infty X$ and $\partial_\infty Y$ are dense.

    If for some $d$, both $f^d$ and $g^d$ induced continuous maps on the visual boundary, then Lemma \ref{induced_inverse} would imply that $f^d\circ g^d$ is the identity on a dense set of $\partial_\infty Y$ and hence the identity function on all of $\partial_\infty Y$.  Similarly, $g^d\circ f^d$ would be the identity function on $\partial_\infty X$.  Thus $f^d$ would give a homeomorphism from $\partial_\infty X$ to $\partial_\infty Y$, a contradiction.  Thus, for every $d$, at least one of $f^d:\partial_\infty X\to \partial_\infty Y$ and $g^d:\partial_\infty Y\to \partial_\infty X$ is not continuous.

    Alternatively, Ruane and Bowers give an example of two actions of a group $G$ on the product of a tree with $\mathbb R$ such that the $G$-equivariant induced quasi-isometry does not induce a continuous map of the boundary \cite{BowersRuane96}.
\end{exa}

In \cite{Staley12}, it is shown that the limit set of a geodesic under a quasi-isometry can be any connected, compact subset of Euclidean space.  One might hope that for the sufficiently small scaling sequences the maps $f^d$ would tend to choose a favorite limit point, i.e. $f^d$ stabilize for a fixed $\alpha$ and sufficiently small scaling sequence $d$.  However the following proposition illustrates that this is not the case.

\begin{prop}\label{non-stabilize}
  Let $f:X\to Y$ be a quasi-isometric embedding of proper CAT(0) spaces and fix $\alpha\in \partial X$.  Then $\Lambda\bigl(\im(f\circ\alpha)\bigr) = \{ f^d(\alpha)\mid d\text{ is a scaling sequence} \}$. For every $\gamma \in \Lambda\bigl(\im(f\circ\alpha)\bigr)$ and scaling sequence $d$, there exist scaling sequences $d'$, $\tilde d$ such that $d'\preceq d\preceq \tilde d$ and   $ f^{\tilde d}(\alpha)= \gamma=  f^{d'}(\alpha) $.
\end{prop}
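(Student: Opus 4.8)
The plan is to prove the two assertions in turn: the equality $\Lambda\bigl(\im(f\circ\alpha)\bigr) = \{f^d(\alpha) \mid d \text{ a scaling sequence}\}$ is essentially formal, while the ``moreover'' clause carries the real content. Recall that, by the standing convention, the basepoint of $Y$ is $f(x_0) = f\circ\alpha(0)$, so every element of $\partial_\infty Y$ is a geodesic ray based at $f(x_0)$; recall also that $\d\bigl(f(x_0), f\circ\alpha(t)\bigr) \to \infty$ as $t \to \infty$ since $f$ is a quasi-isometric embedding. For the inclusion $\{f^d(\alpha)\} \subseteq \Lambda\bigl(\im(f\circ\alpha)\bigr)$, fix a scaling sequence $d = (d_n)$; then $\d\bigl(f(x_0), f\circ\alpha(d_n)\bigr)$ is $\omega$-divergent, so $\omega$-almost surely the segment $[f(x_0), f\circ\alpha(d_n)]$ is defined out to any prescribed radius and $f^d(\alpha)(R) = \lim^\omega [f(x_0), f\circ\alpha(d_n)](R)$. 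Given a basic neighborhood $V\bigl(f^d(\alpha), R, \epsilon\bigr)$ in $\overline Y$, the segment $[f(x_0), f\circ\alpha(d_n)]$ passes within $\epsilon$ of $f^d(\alpha)(R)$ $\omega$-almost surely, so $f\circ\alpha(d_n) \in V\bigl(f^d(\alpha), R, \epsilon\bigr)$ for such $n$, and in particular this neighborhood meets $\im(f\circ\alpha)$; hence $f^d(\alpha) \in \cl_{\overline Y}\bigl(\im(f\circ\alpha)\bigr) \cap \partial_\infty Y = \Lambda\bigl(\im(f\circ\alpha)\bigr)$.

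Now I turn to the ``moreover'' clause, which also subsumes the reverse inclusion. Fix $\gamma \in \Lambda\bigl(\im(f\circ\alpha)\bigr)$ and a scaling sequence $d$. Since $Y$ is proper, $\overline Y$ is metrizable, so $\gamma$ is the limit in $\overline Y$ of a sequence $f\circ\alpha(s_k)$ of points of $\im(f\circ\alpha)$; as $\gamma \in \partial_\infty Y$ and $Y$ is proper, $\d\bigl(f(x_0), f\circ\alpha(s_k)\bigr) \to \infty$, whence $s_k \to \infty$. Passing to a subsequence I may assume $(t_k) := (s_k)$ is strictly increasing. Unwinding the definition of the neighborhoods $V(\gamma, R, \epsilon)$ as in the previous paragraph (and using that the segments $[f(x_0), f\circ\alpha(t_k)]$ are eventually defined at any given radius), the convergence $f\circ\alpha(t_k) \to \gamma$ forces $[f(x_0), f\circ\alpha(t_k)](R) \to \gamma(R)$ for every $R \ge 0$: if $[f(x_0), f\circ\alpha(t_k)]$ meets $\Ball_\epsilon\bigl(\gamma(R)\bigr)$ at parameter $r$, then $|r - R| < \epsilon$, so $\d\bigl([f(x_0), f\circ\alpha(t_k)](R), \gamma(R)\bigr) < 2\epsilon$. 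Thus for each $s \ge 0$ and $\epsilon > 0$ there is $K = K(s, \epsilon)$ with $\d\bigl([f(x_0), f\circ\alpha(t_k)](s), \gamma(s)\bigr) < \epsilon$ whenever $k \ge K$.

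It remains to ``discretize'' $d$ against $(t_k)$, both from below and from above. Since $d$ is $\omega$-divergent, $A := \{n \mid d_n > t_1\}$ is $\omega$-large; for $n \in A$ put $j(n) := \max\{j \mid t_j \le d_n\}$ (finite since $t_k \to \infty$, and $\ge 1$ since $t_1 < d_n$) and set $d'_n := t_{j(n)}$, while for $n \notin A$ set $d'_n := d_n$. Then $d'_n \le d_n$ for every $n$, so $d' \preceq d$; and $d'$ is $\omega$-divergent, because for any $M$, choosing $J$ with $t_J > M$, the $\omega$-large set $A \cap \{n \mid d_n > t_J\}$ lies in $\{n \mid d'_n > M\}$. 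Dually, set $\tilde d_n := t_{k(n)}$ where $k(n) := \min\{k \ge n \mid t_k \ge d_n\}$ (which exists since $t_k \to \infty$); then $\tilde d_n \ge d_n$ for every $n$, so $d \preceq \tilde d$, and $\tilde d_n \ge t_n \to \infty$, so $\tilde d$ is $\omega$-divergent. To see $f^{d'}(\alpha) = \gamma = f^{\tilde d}(\alpha)$, fix $s \ge 0$ and $\epsilon > 0$ and put $K := K(s, \epsilon)$. For $d'$: the $\omega$-large set $A \cap \{n \mid d_n > t_K\}$ is contained in $\{n \in A \mid j(n) \ge K\}$, and there $d'_n = t_{j(n)}$ with $j(n) \ge K$, so $\d\bigl([f(x_0), f\circ\alpha(d'_n)](s), \gamma(s)\bigr) < \epsilon$ $\omega$-almost surely. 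For $\tilde d$: for every $n \ge K$ we have $k(n) \ge n \ge K$, so the same estimate holds for $\tilde d_n = t_{k(n)}$ on the cofinite (hence $\omega$-large) set $\{n \ge K\}$. As $s, \epsilon$ were arbitrary, $f^{d'}(\alpha)(s) = \gamma(s) = f^{\tilde d}(\alpha)(s)$ for all $s$, i.e.\ $f^{d'}(\alpha) = \gamma = f^{\tilde d}(\alpha)$. Taking $d$ to be any fixed scaling sequence and $\gamma$ arbitrary in $\Lambda\bigl(\im(f\circ\alpha)\bigr)$ then yields the remaining inclusion $\Lambda\bigl(\im(f\circ\alpha)\bigr) \subseteq \{f^d(\alpha)\}$.

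I expect the main obstacle to be the bookkeeping in the last paragraph: the sequences $d'$ and $\tilde d$ must simultaneously lie below (respectively above) the prescribed $d$, remain $\omega$-divergent, and have rounding index $j(n)$ (respectively $k(n)$) tending to $\infty$ along $\omega$, so that the $\omega$-limit ray is still $\gamma$. The tension is tightest for the lower bound, where one wants $d'_n$ small and yet the points $f\circ\alpha(d'_n)$ far enough out along the quasi-geodesic to track $\gamma$; this is exactly where $\omega$-divergence of $d$ is used, as it forces $d_n > t_J$ $\omega$-almost surely for every $J$ and hence leaves room to round $d_n$ down to a large $t_{j(n)}$.
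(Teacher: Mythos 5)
Your proof is correct and follows essentially the same route as the paper: extract a sequence of times $t_k$ along which $f\circ\alpha(t_k)\to\gamma$ in $\overline Y$, then round each $d_n$ down (resp.\ up) to the nearest such time to produce $d'\preceq d\preceq\tilde d$ with $f^{d'}(\alpha)=\gamma=f^{\tilde d}(\alpha)$. You simply carry out in full the steps the paper labels ``elementary to check'' (the $\omega$-divergence of $d'$, the edge case where no $t_j\le d_n$, and the pointwise convergence of the segments), and you also supply the easy inclusion $\{f^d(\alpha)\}\subseteq\Lambda\bigl(\im(f\circ\alpha)\bigr)$, which the paper's proof leaves implicit.
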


\begin{proof}
    Let $f:X\to Y$ be a quasi-isometric embedding of CAT(0) spaces  and fix a scaling sequence $d= (d_n)$.  Let $\gamma\in \Lambda\bigl(\im(f\circ\alpha)\bigr)$, i.e. $\gamma$ is in the limit set of $f\circ \alpha$.  Then there exist $\bar d_n$ such that $f\circ\alpha(\bar d_n)$ converges (not just $\omega$-almost surely) to $\gamma$ in $\overline Y$ which implies that $\lim^\omega \bigl[f\circ\alpha(0), f\circ\alpha(\bar d_n)\bigr] = \gamma$.

    Let $d_n' = \max \{ \bar d_i \mid \bar d_i \leq d_n\}$  and $\tilde d_n = \min \{ \bar d_i \mid \bar d_i \geq d_n\}$.  Then for $d' = (d_n')$ and $\tilde d= (\tilde d_n)$, we have $d'\preceq d\preceq \tilde d$ and it is elementary to check that $f^{d'}(\alpha) = \gamma=f^{\tilde d}(\alpha)$.
\end{proof}



\begin{thm}\label{t:compact 1}
Let $X$ be a proper cocompact CAT(0) space such that $\partial_T(X)$ is compact.  Then $\partial_T(X)$ is homeomorphic to $\partial_\infty X$ and if there exists a quasi-isometry $f: X\to Y$, then $f^d$ is a bi-Lipschitz equivalence from $\partial_T X$ to $\partial_T Y$.
\end{thm}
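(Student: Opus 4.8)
The plan is to prove the two assertions in turn, the second building on the first. \emph{The homeomorphism.} First recall that the identity map $\partial_T X\to\partial_\infty X$ is continuous: if $\d_T(\alpha_n,\alpha)\to 0$ then $\angle(\alpha_n,\alpha)\to 0$, and since $s\mapsto\d(\alpha_n(s),\alpha(s))/s$ is nondecreasing with limit $2\sin(\angle(\alpha_n,\alpha)/2)$ we get $\d(\alpha_n(s),\alpha(s))\le 2s\sin(\angle(\alpha_n,\alpha)/2)$ for all $s$, so $\d(\alpha_n(R),\alpha(R))\to 0$ for every $R$ and hence $\alpha_n\to\alpha$ in the visual topology (cf.\ \cite{BridHae99}). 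As $\partial_T X$ is compact and $\partial_\infty X$ is Hausdorff, this continuous bijection is a homeomorphism; in particular $\partial_T X$ and $\partial_\infty X$ carry the same topology.

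\emph{Step 1: $\conn\bigl(X,(x_0),d\bigr)=\cone(\partial_T X)$ for every scaling sequence $d$.} By Proposition~\ref{isometric embedding}, $\Psi_d$ is an isometric embedding, so it suffices to show it is onto. Given $(x_n)\in\conn\bigl(X,(x_0),d\bigr)$ at distance $t$ from the observation point, cocompactness (Proposition~\ref{contracting}) provides geodesic rays $\alpha_n$ from $x_0$ with $\d(\alpha_n(t_nd_n),x_n)\le K$ and $\lim^\omega t_n=t$. Let $\alpha=\lim^\omega\alpha_n$ in $\partial_\infty X$; since $\partial_T X$ and $\partial_\infty X$ carry the same topology, $\lim^\omega\alpha_n=\alpha$ in $\partial_T X$ as well, so $\lim^\omega\angle(\alpha_n,\alpha)=0$, and by the inequality above $\lim^\omega\d(\alpha_n(t_nd_n),\alpha(t_nd_n))/d_n\le\lim^\omega t_n\cdot 2\sin(\angle(\alpha_n,\alpha)/2)=0$; hence $(x_n)=\bigl(\alpha(t_nd_n)\bigr)=\Psi_d(t,\alpha)$. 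Combined with $\Theta^d_{d'}\circ\Psi_d=\Psi_{d'}$ (Lemma~\ref{commutes}) and Theorem~\ref{direct limit}, every connecting map $\Theta^d_{d'}$ and every map $\Theta_d$ into the direct limit is an isometry on the $X$-side.

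\emph{Step 2: the boundary map.} Let $g\colon Y\to X$ be a quasi-inverse of $f$. The induced maps $f^\omega$ and $g^\omega$ of asymptotic cones are bi-Lipschitz, and since $g\circ f$ and $f\circ g$ are uniformly close to the identity, $g^\omega\circ f^\omega$ and $f^\omega\circ g^\omega$ are the identity; thus $f^\omega$ is a bi-Lipschitz homeomorphism with inverse $g^\omega$. Consequently $\conn\bigl(Y,(f(x_0)),d\bigr)$ is bi-Lipschitz to $\cone(\partial_T X)$, which is proper since $\partial_T X$ is compact, so $\conn\bigl(Y,(f(x_0)),d\bigr)$ is proper; the isometrically embedded copy of $\cone(\partial_T Y)$ then has compact unit sphere, so $\partial_T Y$ is compact, and running Step~1 on the $Y$-side gives $\conn\bigl(Y,(f(x_0)),d\bigr)=\cone(\partial_T Y)$ with $\Theta_d$ and the connecting maps on the $Y$-side also isometries. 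Now the relation $\Theta_d\circ f^\omega\circ\Psi_d(1,\alpha)=(1,f^d(\alpha))$ of Lemma~\ref{lem - relations}, with $\Theta_d$ an isometry on the $Y$-side, becomes $f^\omega\circ\Psi_d(1,\alpha)=\Psi_d(1,f^d(\alpha))$, and symmetrically $g^\omega\circ\Psi_d(1,\beta)=\Psi_d(1,g^d(\beta))$. Composing these, using $g^\omega\circ f^\omega=\mathrm{id}$ and $f^\omega\circ g^\omega=\mathrm{id}$, and cancelling the injective map $\Psi_d$, we obtain $g^d\circ f^d=\mathrm{id}_{\partial_T X}$ and $f^d\circ g^d=\mathrm{id}_{\partial_T Y}$. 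Since $f^d$ and $g^d$ are Lipschitz for the Tits metric, as shown just before this theorem, $f^d$ is a bi-Lipschitz equivalence from $\partial_T X$ to $\partial_T Y$.

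The main obstacle I anticipate is the transfer in Step~2 of the identification $\conn(\cdot)=\cone\bigl(\partial_T(\cdot)\bigr)$ from $X$ to $Y$: cocompactness of $X$ was used essentially in Step~1 (to place geodesic rays uniformly near every point of the cone), whereas $Y$ is only assumed proper, so the identification on the $Y$-side must instead be extracted from the fact that its asymptotic cone is simultaneously a bi-Lipschitz copy of a proper Euclidean cone and an asymptotic cone of a proper CAT(0) space. The other point requiring care is matching $f^d$ and $g^d$ precisely with the restrictions of $f^\omega$ and $g^\omega$ to the embedded Tits cones — that is, tracking the cone coordinates and the direct-limit identifications packaged into the notation of Lemma~\ref{lem - relations}.
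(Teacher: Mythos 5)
Your proof is correct and follows essentially the same route as the paper: a continuous bijection from a compact space to a Hausdorff space for the homeomorphism, identification of the asymptotic cones with the Euclidean cones on the Tits boundaries via surjectivity of $\Psi_d$ (you prove this directly by collapsing the rays $(\alpha_n)$ to $\lim^\omega\alpha_n$, where the paper routes through $\partial_\angle^\omega X = \partial_T X$ and Proposition \ref{contracting}), and the factorization $f^d = \Theta_d\circ f^\omega\circ\Psi_d|_{\{1\}\times \partial_T X}$ for the bi-Lipschitz claim. The obstacle you flag at the end---that cocompactness of $Y$ is needed to run Step 1 on the $Y$-side---is not resolved differently in the paper: its proof likewise invokes surjectivity of $\Psi^\omega_d$ on the $Y$-side, so $Y$ is implicitly taken to be proper and cocompact, under which reading your argument is complete.
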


\begin{proof}
There is always a continuous bijection from $\partial_T X$ to $\partial_\infty X$.  If $\partial_T X$ is compact, then this is a homeomorphism.

Suppose there exists a quasi-isometry $f: X\to Y$.  Fix a scaling sequence $d = (d_n)$. If $\partial_T X$ is compact, then $\partial_\angle^\omega X$ is isometric to $\partial_T X$ and  $\cone (\partial_\angle^\omega X)$ is locally compact.  Thus both  $\conn\bigl(X, (x_0), d\bigr)$ and $\conn\bigl(Y, \bigl(f(x_0)\bigr), d\bigr)$ are proper metric spaces, since $\Psi^\omega_d$ is a 1-Lipschitz surjection that preserves distances to the basepoint.  Then $\Psi_d (1, \cdot)$ embeds $\partial_T Y$ as a closed subset of a closed ball of the proper metric space, $\conn\bigl(Y, \bigl(f(x_0)\bigr), d\bigr)$.  Thus  $\partial_T Y$ is also compact and $\partial_\angle^\omega Y = \partial_T Y$.

Thus $\Psi_d$ is a surjection when viewed as a map from $\cone (\partial_T X)$ to $\conn\bigl(X, (x_0), d\bigr)$ or as a map from $\cone (\partial_T Y)$ to $\conn\bigl(Y, \bigl(f(x_0)\bigr), d\bigr)$. By Lemma \ref{isometric embedding}, $\Theta_d$ is an isometry when restricted to the image of $\Psi_d$.  Thus   $f^d =  \Theta_d\circ f^\omega \circ \Psi_d |_{\{1\}\times \partial_T X}$ is bi-Lipschitz.
\end{proof}

It follows that compactness of the Tits boundary is a quasi-isometry invariant.  However, this restriction is so severe it essentially only holds for Euclidean flats.
More precisely, we give the following alternate proof of an unpublished result of Bosch\'{e} \cite[Proposition 3]{Bosche_preprint}.

\begin{thm}
Let $X$ be a proper CAT(0) space admitting a properly discontinuous, cocompact group action by isometries.  If $\partial_T X$ is compact then $X$ has a quasi-dense Euclidean flat.
\end{thm}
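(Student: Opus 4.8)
The plan is to show that, up to finite index, the acting group is a lattice of Euclidean translations, and then to use cocompactness to see that the invariant flat is quasi-dense. Let $\Gamma$ be the group acting properly discontinuously and cocompactly by isometries on $X$; since $X$ is proper, the Milnor--\v{S}varc lemma shows $\Gamma$ is finitely generated and quasi-isometric to $X$. Because $\partial_T X$ is compact, the argument in the proof of Theorem \ref{t:compact 1} shows that $\conn\bigl(X,(x_0),d\bigr)$ is a proper metric space for every non-principal ultrafilter $\omega$ and scaling sequence $d$ --- indeed, by Proposition \ref{isometric embedding}, $\Psi_d$ is then a surjective isometric embedding of $\cone(\partial_T X)$ onto it. As $\Gamma$ is quasi-isometric to $X$, each asymptotic cone $\conn\bigl(\Gamma,(e),d\bigr)$ is bi-Lipschitz equivalent to the corresponding $\conn\bigl(X,(x_0),d\bigr)$, hence locally compact.

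Next I would invoke the equivalence recalled in the introduction: a finitely generated group all of whose asymptotic cones are locally compact has polynomial growth, hence is virtually nilpotent \cite{Gromov1,Pansu}. Since $\Gamma$ also acts geometrically on the CAT(0) space $X$, a virtually nilpotent group doing so must be virtually abelian, a standard consequence of the Flat Torus Theorem \cite[Chapter II.7]{BridHae99}; so $\Gamma$ contains a finite-index subgroup $A$ isomorphic to $\mathbb Z^k$ for some $k$. Applying the Flat Torus Theorem to $A$ (which acts by semisimple isometries, since the ambient action is cocompact), the minimal set $\operatorname{Min}(A)$ is nonempty, $A$-invariant, and splits isometrically as $F_0\times\mathbb E^k$, with $A$ acting trivially on $F_0$ and cocompactly by translations on $\mathbb E^k$. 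Fixing $*\in F_0$, the slice $F:=\{*\}\times\mathbb E^k$ is a $k$-flat in $X$, preserved by $A$, on which $A$ acts cocompactly by translations.

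Finally I would check that $F$ is quasi-dense. Cocompactness of the $\Gamma$-action gives a compact set $K$ with $X=\Gamma K$; choosing coset representatives $\Gamma=\bigsqcup_{i=1}^m A g_i$ yields $X=A K'$ with $K':=\bigcup_{i=1}^m g_i K$ compact. For $x\in X$ write $x=ak'$ with $a\in A$ and $k'\in K'$; since $a$ is an isometry fixing $F$ setwise,
\[
\d(x,F)=\d(ak',F)=\d(ak',aF)=\d(k',F)\le\sup_{y\in K'}\d(y,F)=:R,
\]
and $R<\infty$ because $\d(\cdot,F)$ is a continuous real-valued function on the compact set $K'$. Hence $F$ is $R$-dense in $X$, i.e.\ $X$ has a quasi-dense Euclidean flat.

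The step requiring the most care is the reduction to a virtually abelian group: one must confirm that $X$ meets the hypotheses of Theorem \ref{t:compact 1} (the given action is by a subgroup of the full isometry group, which is therefore cocompact), that $\Gamma$ is finitely generated so that Gromov's theorem applies, and that the implication ``a virtually nilpotent group acting geometrically on a CAT(0) space is virtually abelian'' is used with the correct attribution; once $A\cong\mathbb Z^k$ is in hand, the remainder is a direct application of the Flat Torus Theorem together with cocompactness.
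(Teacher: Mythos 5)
Your proof is correct, but it takes a genuinely different route from the paper's. You pass through the acting group: properness of every asymptotic cone of $X$ (hence local compactness of every cone of $\Gamma$) gives polynomial growth and virtual nilpotence via Gromov--Pansu, the Solvable Subgroup Theorem upgrades this to virtually $\mathbb{Z}^k$ since $\Gamma$ acts geometrically on a CAT(0) space, and the Flat Torus Theorem then produces an $A$-invariant flat $F$ whose quasi-density follows from writing $X = AK'$ with $K'$ compact and using $aF = F$. The paper instead stays with the space itself: it quotes the result of Point and Sapir that a proper asymptotic cone of a cocompact space is a Euclidean flat, deduces that $\partial_T X$ is a round sphere, invokes Leeb's theorem to produce a flat $F$ with $\partial_\infty F = \partial_\infty X$, and proves quasi-density by an angle-comparison argument using the Geoghegan--Ontaneda constant $K$: a point at distance $\ge 2K$ from $F$ would force a geodesic ray based at its projection onto $F$ making angle at least $\pi/4$ with every ray of $\partial F$, contradicting $\partial_\infty F = \partial_\infty X$. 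Your approach obtains the virtual abelianness of $\Gamma$ (the paper's subsequent corollary, deduced there from Bieberbach's theorem) as an intermediate step, at the cost of invoking Gromov's polynomial growth theorem; the paper's approach is intrinsic to $X$ and uses the group only through cocompactness. Note also that your quasi-density argument leans on the $A$-invariance of $F$, which the Flat Torus Theorem supplies, whereas the paper's flat is only known to have full visual boundary, which is why it needs the separate angle argument.
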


\begin{proof}
As noted in the proof of Theorem \ref{t:compact 1}, this implies that $\conn\bigl(X, (x_0), d\bigr)$ is a proper metric space, which implies that $\conn\bigl(X, (x_0), d\bigr)$ is a Euclidean flat by \cite{Point, Sapir15}.  Thus the Tits boundary of $X$ is a sphere by Lemma \ref{contracting} and Proposition \ref{isometric embedding}.  Thus by \cite[Proposition 2.1]{Leeb00}, $X$ contains a flat $F$ with $\partial_\infty F = \partial_\infty X$.

We now only need show that $F$ is quasi-dense.  By \cite{GeogheganOntaneda2007} there is some constant $K$ (depending only on $X$ and the action) such that for every $x, y\in X$ there exists a geodesic ray $\alpha$ based at $x$ such that  $\d\bigl(\im(\alpha),  y\bigr)\leq K$.  Suppose that there exists a point $y\in X$ such that $\d(y, F) \geq 2K$.  Let $x\in F$ be the closest point projection of $y$ in $F$ and $\alpha$ a geodesic ray based at $x$ such that  $\d\bigl(\im(\alpha),  y\bigr)\leq K$.  Then $\angle_x \bigl(y, \alpha\bigr) \le \pi/6$ by comparison with Euclidean geometry (law of sines), and $\angle_x(y, z) \geq \pi/2 $ for any $z\in F\backslash \{x\}$.  Thus for any geodesic ray $\beta$ in $F$ based at $x$, we have that $\angle_x\bigl(\alpha,\beta\bigr)\geq \pi/4$.  Thus $\alpha\not\in \partial F$, which contradicts the fact that $\partial_\infty F = \partial_\infty X$.  Thus $X\subset \mathcal N_{2K}(F)$ and $F$ is quasi-dense.
\end{proof}

Thus by Bieberbach's Theorem \cite{Bieberbach11}, we have the following corollary.

\begin{cor}
Every finitely generated group which acts properly discontinuously, cocompactly, and by isometries on a CAT(0) space with compact Tits boundary is virtually free abelian.
\end{cor}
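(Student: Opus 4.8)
The plan is to upgrade the quasi-dense flat supplied by the previous theorem into an explicit crystallographic action on Euclidean space and then quote Bieberbach. Let $G$ act properly discontinuously, cocompactly, and by isometries on the proper CAT(0) space $X$ with $\partial_T X$ compact, and let $F \subset X$ be the quasi-dense flat produced above; from the proof of that theorem, $F$ is isometric to $\mathbb{E}^n$ for some $n$ and $\partial_\infty F = \partial_\infty X$. First I would pass to the parallel set $P(F)$, the union of all $n$-flats in $X$ lying at finite Hausdorff distance from $F$. By the structure theory of parallel sets in CAT(0) spaces, $P(F)$ is a closed convex subset of $X$ that splits isometrically as $P(F) \cong \mathbb{E}^n \times C$, with $F$ carried onto a slice $\mathbb{E}^n \times \{c_0\}$ and $C$ a closed convex subspace of $X$ (see \cite{BridHae99}). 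For every $g \in G$ the flat $gF$ is again quasi-dense, hence at finite Hausdorff distance from $F$, so $gF \subset P(F)$ and, since $gF$ is parallel to $F$, $gP(F) = P(gF) = P(F)$; thus $P(F)$ is $G$-invariant. The $G$-action on $P(F)$ remains properly discontinuous, and it is cocompact because the nearest-point projection $X \to P(F)$ is $1$-Lipschitz and $G$-equivariant, so it carries a compact set $K_0$ with $GK_0 = X$ onto a compact set whose $G$-translates cover $P(F)$. Finally, since $F$ is quasi-dense in $X$ and $F \subset P(F) \subset X$, the slice $\mathbb{E}^n \times \{c_0\}$ is quasi-dense in $\mathbb{E}^n \times C$, which forces $C$ to be bounded; being closed in the proper space $X$, the cross-section $C$ is compact.

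Next I would split off the Euclidean factor at the level of isometry groups. Because $C$ is bounded it has no nontrivial Euclidean de Rham factor, so $\mathbb{E}^n$ is exactly the maximal Euclidean factor of $\mathbb{E}^n \times C$; by the product decomposition theorem for CAT(0) spaces \cite{BridHae99}, every isometry of $P(F)$ preserves this splitting, yielding a homomorphism $\rho\colon G \to \operatorname{Isom}(\mathbb{E}^n)$. I claim $\ker\rho$ is finite and $\rho(G)$ is crystallographic. If $g \in \ker\rho$ then $g$ acts on $\mathbb{E}^n \times C$ as $(\operatorname{id}_{\mathbb{E}^n}, h)$ for some $h \in \operatorname{Isom}(C)$, hence fixes setwise the compact set $\overline{\Ball}_1(x_0) \times C$; by proper discontinuity of the $G$-action on $P(F)$ only finitely many $g \in G$ do so, so $\ker\rho$ is finite. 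In the same way, if $\rho(g_i) \to \operatorname{id}$ in $\operatorname{Isom}(\mathbb{E}^n)$ then each $g_i$ moves $\overline{\Ball}_1(x_0) \times C$ into one fixed slightly larger compact set (its $C$-coordinate being unchanged), forcing all but finitely many $g_i$ to coincide; hence $\rho(G)$ is discrete. And $\rho(G)$ acts cocompactly on $\mathbb{E}^n$ because $G$ acts cocompactly on $\mathbb{E}^n \times C$ with $C$ compact.

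Now Bieberbach's theorem \cite{Bieberbach11} applies to the discrete cocompact group $\rho(G) \le \operatorname{Isom}(\mathbb{E}^n)$: it contains a free abelian subgroup of rank $n$ and finite index. Its preimage in $G$ is then a finite-index subgroup that is an extension of $\mathbb{Z}^n$ by a finite group, and such a group is readily seen to be virtually $\mathbb{Z}^n$ (pass to the centralizer of the finite normal subgroup to reduce to a finitely generated nilpotent group of class at most two, then split off the torsion using residual finiteness). Therefore $G$ is virtually free abelian, which is the assertion of the corollary.

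I expect the main obstacle to be the CAT(0) structure theory of the second and third steps: the clean product splitting $P(F) \cong \mathbb{E}^n \times C$ of the parallel set, with the cross-section identified and shown to be compact, together with the resulting splitting $\operatorname{Isom}(\mathbb{E}^n \times C) \cong \operatorname{Isom}(\mathbb{E}^n) \times \operatorname{Isom}(C)$, and the verification that $\rho$ has finite kernel and discrete cocompact image. Once $G$ is recognized as a finite extension of a Euclidean crystallographic group, the conclusion is immediate. (Alternatively, one could argue more softly that $X$, having a quasi-dense flat, is quasi-isometric to $\mathbb{E}^n$, so $G$ is quasi-isometric to $\mathbb{Z}^n$ and hence virtually $\mathbb{Z}^n$; the route through an explicit crystallographic action is what makes Bieberbach's theorem directly applicable.)
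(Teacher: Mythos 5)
Your proof is correct, but it is considerably more explicit than what the paper does: the paper derives this corollary in a single sentence from the preceding theorem, the implicit argument being that a quasi-dense flat makes $X$ (hence $G$) quasi-isometric to $\mathbb{E}^n$, at which point quasi-isometric rigidity of $\mathbb{Z}^n$ (resting on Bieberbach) finishes the job --- essentially the ``softer'' alternative you mention in your last parenthesis. Your route instead manufactures an honest crystallographic action: pass to the $G$-invariant parallel set $P(F)\cong\mathbb{E}^n\times C$, show $C$ is compact, split the isometry group, and check that $\rho(G)\le\operatorname{Isom}(\mathbb{E}^n)$ is discrete and cocompact so that Bieberbach applies literally. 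This buys a cleaner, self-contained deduction (no appeal to Gromov's polynomial growth theorem hiding inside ``QI-rigidity of $\mathbb{Z}^n$''), at the cost of some CAT(0) structure theory. The one step worth tightening is your appeal to ``the product decomposition theorem'' for the splitting $\operatorname{Isom}(\mathbb{E}^n\times C)=\operatorname{Isom}(\mathbb{E}^n)\times\operatorname{Isom}(C)$: the versions in \cite{BridHae99} carry hypotheses (irreducibility, geodesic completeness) you have not verified for $C$. In your situation there is a direct argument: since $C$ is bounded, the $C$-component of any complete geodesic in $\mathbb{E}^n\times C$ is a bounded geodesic of constant speed defined on all of $\mathbb{R}$, hence constant, so every $n$-flat is a slice $\mathbb{E}^n\times\{c\}$; any isometry of $P(F)$ therefore permutes the slices, and the identity $\d\bigl(g(v,c),g(v,c')\bigr)=\d\bigl((v,c),(v,c')\bigr)$ then forces the $\mathbb{E}^n$-component of $g$ to be independent of $c$, i.e.\ $g$ is a product isometry. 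With that patch, and the standard (if tersely justified) fact that a finitely generated finite-by-$\mathbb{Z}^n$ group is virtually $\mathbb{Z}^n$, your argument is complete.
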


The \emph{Hausdorff distance} between two subsets $A,B$ of a metric space $X$ is
$$\d_H\bigl(A, B\bigr) = \max\Bigl\{\sup\limits_{a\in A}\bigl\{\inf \limits_{b\in B} \d(a,b)\bigr\}, \sup\limits_{b\in B}\bigl\{\inf \limits_{a\in A} \d(a,b)\bigr\}\Bigr\}. $$

\begin{lem}\label{close rays}
    Let $\alpha$ and $\beta$ be quasi-geodesic rays in an arbitrary metric space.
If $\im (\alpha) \subset \mathcal N_{M} \bigl(\im (\beta) \bigl)$ for some $M$, then the Hausdorff distance $\d_{H} \bigl(\im (\alpha), \im (\beta)\bigr)$ between $\im (\alpha)$ and $\im (\beta)$ is finite.
\end{lem}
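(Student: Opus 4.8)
The plan is to show that $\im(\beta)$ is contained in a bounded neighborhood of $\im(\alpha)$, given that $\im(\alpha) \subset \mathcal N_M(\im(\beta))$ and both $\alpha,\beta$ are $(\lambda,C)$-quasi-geodesic rays (after enlarging $\lambda,C$ we may take the same constants for both). The key point is that a quasi-geodesic ray is coarsely connected and coarsely proper: for any $T$, the point $\beta(T)$ lies at bounded distance from $\beta(0)$ (namely at most $\lambda T + C$) but also $\beta(T)$ escapes to infinity as $T\to\infty$ since $\d(\beta(0),\beta(T)) \ge \tfrac1\lambda T - C$. So the ``progress'' along $\beta$ is comparable to the parameter.

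First I would fix $t\ge 0$ and estimate $\d\bigl(\beta(t), \im(\alpha)\bigr)$. By hypothesis, for each $s\ge 0$ there is some $u=u(s)$ with $\d\bigl(\alpha(s), \beta(u)\bigr) \le M$. I would like to find an $s$ so that $u(s)$ is close to $t$. The idea is a coarse intermediate value argument along $\beta$: the set $\{u(s) : s\ge 0\}$ is ``coarsely all of $[0,\infty)$'' because consecutive values $u(s)$ and $u(s+1)$ cannot be too far apart — indeed $\d\bigl(\beta(u(s)),\beta(u(s+1))\bigr) \le \d\bigl(\beta(u(s)),\alpha(s)\bigr) + \d\bigl(\alpha(s),\alpha(s+1)\bigr) + \d\bigl(\alpha(s+1),\beta(u(s+1))\bigr) \le 2M + \lambda + C$, and then the lower quasi-geodesic bound for $\beta$ forces $|u(s) - u(s+1)| \le \lambda(2M+\lambda+C) + \lambda C =: N$. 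Since $u(0)$ is bounded and $u(s)\to\infty$ (because $\alpha(s)\to\infty$ and $\alpha(s)$ is within $M$ of $\beta(u(s))$, while $\beta$ is proper in the coarse sense above), for any $t$ large enough there is some $s$ with $|u(s) - t|\le N$. Then $\d\bigl(\beta(t),\im(\alpha)\bigr) \le \d\bigl(\beta(t),\beta(u(s))\bigr) + \d\bigl(\beta(u(s)),\alpha(s)\bigr) \le (\lambda N + C) + M$, a bound independent of $t$. Small values of $t$ (those below the threshold) give $\beta(t)$ within $\lambda t + C$ of $\beta(0)$, which is within $M$ of $\im(\alpha)$, so those are bounded too.

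Combining, $\im(\beta)\subset \mathcal N_{M'}(\im(\alpha))$ for $M' = \lambda N + C + M$, and together with the hypothesis $\im(\alpha)\subset \mathcal N_M(\im(\beta))$ this gives $\d_H\bigl(\im(\alpha),\im(\beta)\bigr) \le \max\{M, M'\} < \infty$. The main obstacle is making the coarse intermediate value step rigorous — specifically verifying that $u(s)\to\infty$ and that the bounded jumps $|u(s)-u(s+1)|\le N$ genuinely force every sufficiently large $t$ to be within $N$ of some $u(s)$; this is where the quasi-geodesic (as opposed to merely unbounded) hypothesis on $\beta$ is essential, and I would state it as a short standalone sub-claim before assembling the final bound.
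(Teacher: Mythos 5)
Your proof is correct and follows essentially the same route as the paper's: choose parameters $u(s)$ on $\beta$ with $\d\bigl(\alpha(s),\beta(u(s))\bigr)\le M$, bound the consecutive jumps $|u(s)-u(s+1)|$ via the triangle inequality together with the lower quasi-geodesic bound on $\beta$, and conclude by a coarse intermediate value argument that every point of $\im(\beta)$ lies a uniformly bounded distance from $\im(\alpha)$. The only cosmetic difference is that the paper anchors the covering of $[0,\infty)$ by setting $t_0=0$ rather than treating small $t$ as a separate case.
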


\begin{proof}
    Let $X$ be a metric space, and let $\alpha$ and $\beta$ be $(L',C')$-quasi-geodesic and $(L,C)$-quasi-geodesic rays, respectively, in $X$ such that $\im (\alpha) \subset \mathcal N_{M} \bigl(\im (\beta) \bigl)$ for some $M$.

    We need to show there exists $M'$ such that $\im (\beta) \subset \mathcal N_{M'} \bigl(\im (\alpha) \bigl)$.
Let $t_0= 0$ and, for $n\in \mathbb N$, fix $t_n$ such that $\d\bigl(\beta(t_n), \alpha(n)\bigr)\leq M$.
Notice that $t_n$ is unbounded  which implies that the union of the closed intervals between $t_n$ and $t_{n+1}$ is $\mathbb R^{\geq0}$.
By the triangle inequality, $\d\bigl(\beta(t_n), \beta(t_{n+1})\bigr) \leq 2M +(L'+C')$ which implies that $|t_n- t_{n+1}|\leq L(2M+L'+C'+C)$.

    Thus for every $t$ in the interval between $t_n$ and  $t_{n+1}$, we have $\d\bigl(\beta(t), \beta(t_n)\bigr)\leq L^2(2M +L'+C' +C) +C$ and hence $\d\bigl(\beta(t), \alpha(n)\bigr) \leq L^2(2M +L'+C' +C) +C+M$.

       Therefore $\im(\beta) \subset \mathcal N_{L^2(2M +L'+C' +C) +C+M} \bigl(\im(\alpha)\bigr)$.  This proves the lemma.
\end{proof}

\begin{defn}[Morse]
    A (quasi-)geodesic $\gamma$ is called a \emph{Morse (quasi-)geodesic}, if  for every $L\geq 1$ and $C\geq 0$ there exists an $M= M(L, C)$ such that every $(L,C)$-quasi-geodesic with endpoints on $\gamma$ remains $M$-close to $\gamma$.
\end{defn}


\begin{lem}\label{morse1}
    Let $f: X \to Y$ be a $(L, C)$-quasi-isometry of proper CAT(0) spaces.  If $\alpha$ is a Morse ray in $\partial X$, then $f^d(\alpha)$ is a Morse ray in $\partial Y$ and there exists an $M$ such that $\d_{H} \bigl(f\circ \alpha, f^d(\alpha)\bigr) <M$.
\end{lem}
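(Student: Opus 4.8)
The plan is to route everything through the fact that $f\circ\alpha$ is itself a Morse quasi-geodesic ray in $Y$: once that is established, I show $f^d(\alpha)$ lies at finite Hausdorff distance from $f\circ\alpha$, and then invoke stability of the Morse condition under finite Hausdorff distance to conclude.

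First I would show that $f\circ\alpha$ --- which is at least an $(L,C)$-quasi-geodesic ray, since $f$ is an $(L,C)$-quasi-isometry --- is in fact Morse. Fix a quasi-inverse $g:Y\to X$ of $f$; it is again a quasi-isometry, and $f\circ g$, $g\circ f$ are uniformly within some $D$ of the respective identity maps. Given a $(\lambda,c)$-quasi-geodesic segment $\eta$ in $Y$ with endpoints on $f\circ\alpha$, the composition $g\circ\eta$ is a quasi-geodesic segment in $X$ whose two endpoints lie within $D$ of points of $\alpha$; adjoining the bounded geodesic segments to those points of $\alpha$ yields a quasi-geodesic segment $\eta'$ with endpoints \emph{on} $\alpha$ and with constants depending only on $\lambda,c$ and the fixed data of $f,g,D$, and crucially not on $\eta$ itself. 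Since $\alpha$ is Morse, $\eta'$ (hence $g\circ\eta$) stays within a bound $M_0 = M_0(\lambda,c)$ of $\im(\alpha)$; applying $f$ and using $f\circ g\approx\mathrm{id}$ then shows $\eta$ stays uniformly close to $\im(f\circ\alpha)$. Thus $f\circ\alpha$ is Morse, with a gauge $M_{f\circ\alpha}(\lambda,c)$ depending only on $\lambda,c$ and the quasi-isometry data.

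Next I would bound the Hausdorff distance. By definition $f^d(\alpha)(t) = \lim^\omega c_n(t)$, where $c_n$ is the arclength-parametrized geodesic $[f\circ\alpha(0), f\circ\alpha(d_n)]$, defined at the parameter $t$ for $\omega$-almost every $n$ since $d$ is $\omega$-divergent and $f$ is a quasi-isometric embedding. Each $c_n$ is a $(1,0)$-quasi-geodesic segment with endpoints on $f\circ\alpha$, so by the previous step $\im(c_n)\subset\mathcal N_{M_1}\bigl(\im(f\circ\alpha)\bigr)$ with $M_1 := M_{f\circ\alpha}(1,0)$; passing to the $\omega$-limit gives $\im\bigl(f^d(\alpha)\bigr)\subset\mathcal N_{M_1+1}\bigl(\im(f\circ\alpha)\bigr)$. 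As $f^d(\alpha)$ is a geodesic ray and $f\circ\alpha$ a quasi-geodesic ray, Lemma \ref{close rays} now produces a finite $M$ with $\d_H\bigl(\im(f\circ\alpha),\im(f^d(\alpha))\bigr) < M$.

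It remains to see that $f^d(\alpha)$ is Morse, which is the general principle that a quasi-geodesic at finite Hausdorff distance from a Morse quasi-geodesic is Morse: any $(\lambda,c)$-quasi-geodesic segment with endpoints on $f^d(\alpha)$ has endpoints within $M$ of $\im(f\circ\alpha)$, so the correction-segment argument used above turns it into a quasi-geodesic segment with endpoints on $f\circ\alpha$ and controlled constants, which the Morse gauge of $f\circ\alpha$ keeps within a uniform bound of $\im(f\circ\alpha)$, hence within a uniform bound of $\im(f^d(\alpha))$. I expect the only real obstacle to be the bookkeeping in showing $f\circ\alpha$ is Morse --- namely verifying that pre- and post-composing with a quasi-isometry and adjoining bounded correction segments yields quasi-geodesics whose constants are uniform in the segment being corrected, since that uniformity is exactly what makes the Morse gauges well-defined; the rest is routine given Lemma \ref{close rays}.
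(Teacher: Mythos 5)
Your proposal is correct and follows essentially the same route as the paper: establish that $f\circ\alpha$ is Morse, deduce that the geodesic segments $[f\circ\alpha(0),f\circ\alpha(d_n)]$ and hence $\im\bigl(f^d(\alpha)\bigr)$ lie in a uniform neighborhood of $\im(f\circ\alpha)$, apply Lemma \ref{close rays} to get finite Hausdorff distance, and conclude via stability of the Morse property under finite Hausdorff distance. The only difference is that you spell out the quasi-inverse and correction-segment arguments that the paper dismisses as immediate or standard.
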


\begin{proof}
    It is immediate that $f\circ \alpha$ is Morse.
So for every $n$, $[f\circ\alpha(0), f\circ\alpha(d_n)]\subset \mathcal N_{M'}\bigl(\im(f\circ\alpha)\bigr)$  for some $M'$ depending only on $\alpha, L, C$, which implies that $\im\bigl(f^d(\alpha)\bigr) \subset \mathcal N_{M'+1}\bigl(\im(f\circ\alpha)\bigr)$.
By Lemma \ref{close rays}, $\im(f\circ \alpha)$ and $\im\bigl(f^d(\alpha)\bigr)$ are finite Hausdorff distance apart.
Since any quasi-geodesic ray that is finite Hausdorff distance from a Morse quasi-geodesic ray is also Morse, $f^d(\alpha)$ is a Morse geodesic ray at $f(x_0)$.
\end{proof}

\begin{lem}\label{induced_inverse}
  Let $f: X \to Y$ be a quasi-isometry of proper CAT(0) metric spaces with quasi-inverse $g: Y\to X$.  If $\alpha \in \partial X$ is a Morse ray, then $f^d(\alpha) = f^{d'}(\alpha) $ and $g^{d'}\circ f^d(\alpha) = \alpha$ for all scaling sequences $d,d'$.
\end{lem}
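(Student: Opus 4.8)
The plan is to deduce the lemma from two facts, both already available: that a Morse geodesic ray is determined up to finite Hausdorff distance by any quasi-geodesic ray staying close to it (Lemmas~\ref{close rays} and~\ref{morse1}), and the elementary CAT(0) fact that two geodesic rays lying at finite Hausdorff distance apart are asymptotic, so that two such rays with a common basepoint must coincide.

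First I would record that $f\circ\alpha$ is a Morse quasi-geodesic ray, and apply Lemma~\ref{morse1} to see that for every scaling sequence $d$ the geodesic ray $f^d(\alpha)$ is Morse and satisfies $\d_H\bigl(\im(f\circ\alpha),\, \im(f^d(\alpha))\bigr)<\infty$. Hence, for any two scaling sequences $d$ and $d'$, both $f^d(\alpha)$ and $f^{d'}(\alpha)$ are at finite Hausdorff distance from $\im(f\circ\alpha)$, and therefore at finite Hausdorff distance from each other; a short parameter-matching argument (as in the proof of Lemma~\ref{close rays}) then gives $\sup_{t\ge 0}\d\bigl(f^d(\alpha)(t),\, f^{d'}(\alpha)(t)\bigr)<\infty$. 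Since both rays issue from $f(x_0)$, convexity of $t\mapsto\d\bigl(f^d(\alpha)(t),\, f^{d'}(\alpha)(t)\bigr)$ forces it to vanish identically, so $f^d(\alpha)=f^{d'}(\alpha)$; write $\beta$ for this common ray.

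Next I would run the same argument with $g$ in place of $f$ and $\beta$ in place of $\alpha$. This is possible precisely because $\beta$ is Morse: Lemma~\ref{morse1} applied to the quasi-isometry $g\colon Y\to X$ gives that $g^{d'}(\beta)$ is a geodesic ray with $\d_H\bigl(\im(g\circ\beta),\, \im(g^{d'}(\beta))\bigr)<\infty$. It then remains to compare $\im(g\circ\beta)$ with $\im\alpha$. Since $\im\beta$ and $\im(f\circ\alpha)$ are at finite Hausdorff distance and $g$ is an $(L,C)$-quasi-isometric embedding, $g$ carries $M$-neighborhoods into $(LM+C)$-neighborhoods, so $\im(g\circ\beta)$ and $\im(g\circ f\circ\alpha)$ are at finite Hausdorff distance; and because $g\circ f$ is uniformly close to the identity on $X$, $\im(g\circ f\circ\alpha)$ is at bounded Hausdorff distance from $\im\alpha$. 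Chaining these estimates, $g^{d'}(\beta)$ and $\alpha$ are geodesic rays of the proper CAT(0) space $X$ at finite Hausdorff distance apart, hence asymptotic, hence the same point of $\partial X$; that is, $g^{d'}\circ f^d(\alpha)=\alpha$.

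The step that most needs care is the bookkeeping of basepoints, since $g^{d'}(\beta)$ issues from $g(f(x_0))$ rather than from $x_0$; this is harmless because $X$ is proper, so each asymptote class contains a unique geodesic ray from each basepoint, and under this canonical identification of $\partial X$ the asymptoticity established above is exactly the statement $g^{d'}(f^d(\alpha))=\alpha$. Apart from that, the only new input beyond Lemmas~\ref{close rays} and~\ref{morse1} is the standard CAT(0) observation that asymptotic rays from a common basepoint coincide — which is also what drives the independence of $f^d(\alpha)$ from $d$.
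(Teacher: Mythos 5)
Your argument is correct and follows essentially the same route as the paper's: both parts reduce to showing the relevant geodesic rays lie at finite Hausdorff distance (via Lemma~\ref{morse1}, the quasi-isometric embedding property of $g$, and the quasi-inverse relation), and then invoke the fact that asymptotic geodesic rays in a proper CAT(0) space determine the same boundary point. Your explicit remarks on the convexity argument and on the basepoint of $g^{d'}(\beta)$ only make precise what the paper leaves implicit.
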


\begin{proof}
  Since $\d_{H}\bigl(f\circ \alpha, f^d(\alpha)\bigr),\d_{H}\bigl(f\circ \alpha, f^{d'}(\alpha)\bigr)$ are both finite then $f^d(\alpha)$ and $f^{d'}(\alpha)$ are asymptotic geodesics based at $f(x_0)$.  Hence $f^d(\alpha) = f^{d'}(\alpha) $.

  Since $g$ is a quasi-isometry and $f^d(\alpha)$ is a Morse geodesic, $\d_{H}\bigl(g\circ f^d(\alpha), g^{d'}\circ f^d(\alpha)\bigr)$ and $\d_{H}\bigl(g\circ f(\alpha), g\circ f^d(\alpha)\bigr)$ are both finite.
Since $g$ is a quasi-inverse of $f$, $\d_{H}\bigl(\alpha, g\circ f(\alpha))$ is also finite.
Thus $g^{d'}\circ f^{d}(\alpha)$ and $\alpha$ are asymptotic, and therefore equal.
\end{proof}

Although Morse rays are well-behaved under quasi-isometries, other rays are generally not.

\begin{exa}\label{spiral}
Consider the family of maps $f \colon X \to X$ on $X = \mathbb{R}^2$ given in polar coordinates by $f(r, \theta) = (r, \theta + h(r))$, where $h$ is differentiable on $r > 0$.
(The case $h(r) = \log(r)$ is the classic ``logarithmic spiral'' map.)
A short calculation reveals the Jacobian of $f$ at $(x,y) = (r \cos \theta, r \sin \theta)$ is, up to multiplying on the left and right by orthogonal matrices,
\[
\begin{pmatrix}
1 & 0 \\
0 & r
\end{pmatrix}
\begin{pmatrix}
1 & 0 \\
h'(r) & 1
\end{pmatrix}
\begin{pmatrix}
1 & 0 \\
0 & \frac{1}{r}
\end{pmatrix}
\\
=
\begin{pmatrix}
1 & 0 \\
r h'(r) & 1
\end{pmatrix}
\]
for $(x,y) \neq (0,0)$.
Thus we see that $f$ is Lipschitz if and only if $r h'(r)$ is bounded.
If so, $f^{-1}$ is also clearly well-defined and Lipschitz, so $f$ is bi-Lipschitz and therefore a quasi-isometry.

Again, the case $h(r) = \log(r)$ is the classic ``logarithmic spiral'' map, and it is easy to see that $f^d$ can take any given point in $\partial X$ to any other point of $\partial X$ by adjusting $d$.

The case $h(r) = \log(\log(r))$ also has the property that $f^d$ can map any point in $\partial X$ to any other by choosing the appropriate $d$.
However, in this case, for every $d$ the induced bi-Lipschitz map $f^\omega :\conn\bigl(X, (x_0), d\bigr) \to \conn\bigl(X, \bigl(f(x_0)\bigr), d\bigr)$ is an isometry, since $r h'(r) \to 0$ as $r \to \infty$.
\end{exa}

More examples can be constructed from the above by using the following general lemma.

\begin{lem}
Let $Y$ be a geodesic metric space and $\pi : Y \to X$ be a locally isometric covering map.
If $\widetilde f : Y \to Y$ is a continuous lift of an $L$-Lipschitz map $f : X \to X$, then $\widetilde f$ is $L$-Lipschitz.
\end{lem}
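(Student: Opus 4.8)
I read ``lift'' in the standard sense, so that $\pi\circ\widetilde f=f\circ\pi$, and I will use only the hypothesis that $\pi$ is a \emph{local isometry} (each point of $Y$ has a neighborhood on which $\pi$ restricts to an isometric embedding); neither surjectivity nor the covering property is needed. The plan is to reduce the claim to two elementary facts about path length — that a local isometry preserves the length of every continuous path, and that an $L$-Lipschitz map multiplies lengths of paths by at most $L$ — and then to exploit that $Y$ is geodesic: given $p,q\in Y$, I would run a geodesic of $Y$ from $p$ to $q$ downstairs through $\pi$, apply $f$, use the lift relation to recognize the resulting path as $\pi$ composed with $\widetilde f\circ\gamma$, and pull the length estimate back up through $\pi$.

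The technical heart is the following lemma, which I would prove first: for any continuous path $c\colon[0,\ell]\to Y$ one has $\operatorname{length}_Y(c)=\operatorname{length}_X(\pi\circ c)$, allowing the value $+\infty$ on either side. To see this, for each $s\in[0,\ell]$ pick a neighborhood $U_s$ of $c(s)$ on which $\pi$ is an isometric embedding; the sets $c^{-1}(U_s)$ form an open cover of the compact interval $[0,\ell]$, so by the Lebesgue number lemma there is a partition $0=t_0<t_1<\cdots<t_N=\ell$ each of whose subintervals $[t_i,t_{i+1}]$ is carried by $c$ into some $U_{s}$. On each such subinterval $\pi$ restricts to an isometric embedding of a set containing $c([t_i,t_{i+1}])$, so $\operatorname{length}_Y(c|_{[t_i,t_{i+1}]})=\operatorname{length}_X(\pi\circ c|_{[t_i,t_{i+1}]})$; summing over $i$ and using additivity of length under concatenation gives the claim. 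I would also record the (standard, partition-wise) fact that $\operatorname{length}_X(f\circ c)\le L\cdot\operatorname{length}_X(c)$ whenever $f$ is $L$-Lipschitz.

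With these in hand the conclusion is a short chase. Fix $p,q\in Y$ and, using that $Y$ is geodesic, choose a geodesic $\gamma$ from $p$ to $q$, so $\operatorname{length}_Y(\gamma)=d_Y(p,q)$. Then $\operatorname{length}_X(\pi\circ\gamma)=d_Y(p,q)$ by the lemma, hence $\operatorname{length}_X(f\circ\pi\circ\gamma)\le L\,d_Y(p,q)$. Continuity of $\widetilde f$ makes $\widetilde f\circ\gamma$ a continuous path from $\widetilde f(p)$ to $\widetilde f(q)$, and the lift relation gives $\pi\circ(\widetilde f\circ\gamma)=f\circ\pi\circ\gamma$; applying the lemma to $\widetilde f\circ\gamma$ yields $\operatorname{length}_Y(\widetilde f\circ\gamma)=\operatorname{length}_X(f\circ\pi\circ\gamma)\le L\,d_Y(p,q)$, in particular finite. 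Therefore $d_Y\bigl(\widetilde f(p),\widetilde f(q)\bigr)\le\operatorname{length}_Y(\widetilde f\circ\gamma)\le L\,d_Y(p,q)$, so $\widetilde f$ is $L$-Lipschitz.

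I expect the only genuine obstacle to be the length lemma, since the rest is bookkeeping: one must be careful not to assume a priori that $\widetilde f\circ\gamma$ is rectifiable — its finite length is a \emph{conclusion} of applying the lemma — and the compactness/Lebesgue-number argument is exactly where the hypothesis that $\pi$ is a local isometry (rather than merely $1$-Lipschitz) is essential, as a $1$-Lipschitz map could shorten paths and break the final inequality.
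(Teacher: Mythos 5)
Your proof is correct, but it takes a genuinely different route from the paper's. The paper reduces the claim to a \emph{local} statement: it argues (via the triangle inequality and geodesicity of $Y$) that it suffices to show $\widetilde f$ is locally $L$-Lipschitz, and then verifies the local estimate pointwise by choosing balls $\Ball_\delta(y)$ and $\Ball_\epsilon(\widetilde f(y))$ on which $\pi$ is an isometry, using continuity of $\widetilde f$ to ensure $\widetilde f(\Ball_\delta(y)) \subset \Ball_\epsilon(\widetilde f(y))$, and transporting the Lipschitz inequality through $\pi$. You instead work with path lengths globally: your key lemma that a local isometry preserves the length of every continuous path (via the Lebesgue number argument) lets you push a geodesic from $p$ to $q$ down to $X$, apply the $L$-Lipschitz bound on lengths there, and pull the estimate back up through the lift relation $\pi \circ \widetilde f = f \circ \pi$. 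Both arguments use the same ingredients (geodesicity of $Y$, the local isometry property, continuity of $\widetilde f$), but your version makes explicit the step the paper compresses into ``by the triangle inequality, it suffices to prove $\widetilde f$ is locally $L$-Lipschitz'' --- the chaining of variable-radius local estimates along a geodesic --- by replacing it with additivity of length under concatenation. You also correctly observe that your argument needs only that $\pi$ is a local isometric embedding, not the full covering hypothesis, and you are right to flag that the rectifiability of $\widetilde f \circ \gamma$ is a conclusion rather than an assumption; the length lemma stated for arbitrary continuous paths with values in $[0,\infty]$ handles this cleanly.
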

\begin{proof}
Assume $L > 0$ (the case $L = 0$ is trivial).
By the triangle inequality, it suffices to prove $\widetilde f$ is locally $L$-Lipschitz---that is, for every $y \in Y$ there is some $\delta > 0$ such that $\widetilde f \vert_{\Ball_\delta(y)}$ is $L$-Lipschitz.
So let $y \in Y$ and $x = \pi(y)$.
By hypothesis on $\pi$, there exists $\epsilon > 0$ such that
$\pi \vert_{\Ball_\epsilon(\widetilde f(y))} :
\Ball_\epsilon(\widetilde f(y)) \to \Ball_\epsilon(f(x))$
is an isometry, and $\delta > 0$ such that
$\pi \vert_{\Ball_\delta(y)} :
\Ball_\delta(y) \to \Ball_\delta(x)$
is an isometry.
By continuity of $\widetilde f$, we may assume $\delta > 0$ is small enough that $\widetilde f (\Ball_\delta(y)) \subset \Ball_\epsilon(\widetilde f(y))$.

So let $p,q \in \Ball_\delta(y)$.
Since $\pi \vert_{\Ball_\epsilon(\widetilde f(y))}$ and
$\pi \vert_{\Ball_\delta(y)}$ are isometries, we find
\begin{align*}
\d(\widetilde f(p), \widetilde f(q))
&= \d(\pi(\widetilde f(p)), \pi(\widetilde f(q)))
= \d(f(\pi(p)), f(\pi(q))) \\
&\le L \cdot \d(\pi(p), \pi(q))
= L \cdot \d(p, q).
\end{align*}
Thus we have shown that $\widetilde f$ is locally $L$-Lipschitz, which proves the lemma.
\end{proof}

\begin{cor}\label{lifting}
Let $X$ be a locally CAT(0) metric space with universal cover $\widetilde X$.
If $f : X \to X$ is bi-Lipschitz then its lift $\widetilde f : \widetilde X \to \widetilde X$ is bi-Lipschitz.
\end{cor}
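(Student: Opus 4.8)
The plan is to apply the preceding lemma to both $f$ and $f^{-1}$, obtaining two Lipschitz self-maps of $\widetilde X$, and then to use uniqueness of lifts to covering spaces to show these maps are inverse to one another.

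First I would assemble the hypotheses needed to invoke the lemma. Since $X$ is locally CAT(0), its universal cover $\widetilde X$ with the induced length metric is CAT(0) (Cartan--Hadamard), hence a geodesic metric space, and the covering projection $\pi\colon\widetilde X\to X$ is a locally isometric covering map; so the lemma applies with $Y=\widetilde X$. Fix a basepoint $\widetilde x_0\in\widetilde X$. As $\widetilde X$ is simply connected and locally path-connected and $f$ is continuous, $f\circ\pi$ lifts to a continuous map $\widetilde f\colon\widetilde X\to\widetilde X$ with $\pi\circ\widetilde f=f\circ\pi$; since $f$ is bi-Lipschitz it is in particular $L$-Lipschitz for some $L\ge 1$, so the lemma gives that $\widetilde f$ is $L$-Lipschitz. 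Running the same argument for $g:=f^{-1}$, which is $L'$-Lipschitz for some $L'\ge 1$, I get a continuous lift $\widetilde g\colon\widetilde X\to\widetilde X$ of $g\circ\pi$, and the lemma shows $\widetilde g$ is $L'$-Lipschitz.

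It remains to see that $\widetilde g$ is a two-sided inverse of $\widetilde f$. Because $\pi\circ(\widetilde g\circ\widetilde f)=g\circ f\circ\pi=\pi$, the composite $\widetilde g\circ\widetilde f$ is a lift of $\mathrm{id}_X$; after replacing $\widetilde g$ by $\tau\circ\widetilde g$ for a deck transformation $\tau$ carrying $(\widetilde g\circ\widetilde f)(\widetilde x_0)$ to $\widetilde x_0$ — deck transformations are isometries of $\widetilde X$, so this does not affect the Lipschitz bound, nor the fact that $\tau\circ\widetilde g$ is a lift of $g\circ\pi$ — I may assume $(\widetilde g\circ\widetilde f)(\widetilde x_0)=\widetilde x_0$, and then uniqueness of lifts on the connected space $\widetilde X$ forces $\widetilde g\circ\widetilde f=\mathrm{id}_{\widetilde X}$. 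Likewise $\widetilde f\circ\widetilde g$ is a lift of $\mathrm{id}_X$, call it $\sigma$; since $\sigma\circ\widetilde f=\widetilde f\circ(\widetilde g\circ\widetilde f)=\widetilde f$, the maps $\sigma$ and $\mathrm{id}_{\widetilde X}$ are both lifts of $\mathrm{id}_X$ agreeing at $\widetilde f(\widetilde x_0)$, so uniqueness of lifts gives $\sigma=\mathrm{id}_{\widetilde X}$, i.e. $\widetilde f\circ\widetilde g=\mathrm{id}_{\widetilde X}$. Therefore $\widetilde f$ is a bijection with $L'$-Lipschitz inverse $\widetilde g$, so $\widetilde f$ is bi-Lipschitz.

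The computations here are entirely routine; the only points requiring care are the bookkeeping with the choice of lift (adjusting $\widetilde g$ by a deck transformation so that it genuinely inverts $\widetilde f$) and the two standard facts that deck transformations of the universal cover are isometries of $\widetilde X$ (so they preserve Lipschitz constants) and that lifts to $\widetilde X$ are determined by their value at one point. So there is no real obstacle beyond checking that the preceding lemma applies, i.e. that $\pi$ is a locally isometric covering of a geodesic space, which is exactly the content of the metric Cartan--Hadamard theorem for locally CAT(0) spaces.
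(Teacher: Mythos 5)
Your proof is correct and follows exactly the route the paper intends: the corollary is stated without proof as an immediate consequence of the preceding lemma, applied once to $f$ and once to $f^{-1}$, and your careful bookkeeping with deck transformations and uniqueness of lifts supplies precisely the details the authors leave implicit.
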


\begin{exa}\label{seaweed}
Let $X = \mathbb{R}^2 \setminus \Ball_1(x)$ and let $\widetilde X$ be its universal cover with the induced CAT(0) metric, which is proper.
Consider the family of maps $f \colon X \to X$ on $X$ given in polar coordinates by $f(r, \theta) = (r, \theta + h(r))$, where $h$ is differentiable on $r \ge 1$.
From Example \ref{spiral}, we see that $f$ is bi-Lipschitz if and only if $r h'(r)$ is bounded, so by Corollary \ref{lifting}, $\widetilde f : \widetilde X \to \widetilde X$ is bi-Lipschitz under the same conditions.

Now notice that $\partial \widetilde X$ is isometric (under the Tits metric) to the disjoint union of a line and two points, corresponding to the directions $\theta \in \mathbb{R}$ and $\theta = \pm \infty$, respectively.
The isolated points $\theta = \pm \infty$ are Morse, so they are fixed by every $f^d$.
The other points, however, are not.

The case $h(r) = \log(r)$ has $f^d (\theta) = +\infty$ for all $\theta \in \mathbb{R} \subset \partial \widetilde X$.
So does $h(r) = \log(\log(r))$, and in this case $\widetilde f^\omega$ is always an isometry.
Thus while Morse rays map to Morse rays, they may not be the only rays that do.

The case $h(r) = \log(r) \sin(\log(\log(r)))$ has, for any given $\theta_1 \in \mathbb{R}$ and $\theta_2 \in \mathbb{R} \cup \{\pm \infty\}$, some $d$ such that $f^d (\theta_1) = \theta_2$.
So does $h(r) =\log(\log(r)) \sin(\log(\log(\log(r))))$, and in this case $\widetilde f^\omega$ is always an isometry.
Thus even the Tits component of $f^d(\alpha)$ is, in general, not solely determined by $f$ and $\alpha$, but can also depend on $d$.
\end{exa}

\begin{prop}\label{obvious}
Let $\alpha,\beta\in\partial X$ for $X$ a proper CAT(0) space.  Then the following are equivalent.
\begin{enumerate}[(i)]
	\item\label{1} The Tits distance from $\alpha$ to $\beta$ is finite. 
	
	\item\label{2} For every (or some) $d$, $\d_T\bigl(\overline\Psi_d(\alpha), \overline\Psi_d(\beta)\bigr)$ is finite.

	\item\label{3}For every (or some) $d$ and any $t>0$, $\Psi_d(t,\alpha), \Psi_d(t,\beta)$ are contained in the same component of $\conn\bigl(X, (x_0), d\bigr)\backslash \bigl\{(x_0)\bigr\}$.
\end{enumerate}

\end{prop}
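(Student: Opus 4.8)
The plan is to prove the cycle \ref{1}$\Rightarrow$\ref{2}$\Rightarrow$\ref{3}$\Rightarrow$\ref{1} and then observe that this forces the ``for every / for some $d$'' variants of each statement to coincide. It is convenient to first dispose of the case $\angle(\alpha,\beta)<\pi$: then $\d_T(\alpha,\beta)=\angle(\alpha,\beta)<\infty$, so \ref{1} holds; by Lemma~\ref{flat sector} the rays $\overline\Psi_d(\alpha)$ and $\overline\Psi_d(\beta)$ bound a Euclidean sector in $\conn\bigl(X,(x_0),d\bigr)$, and in that sector the circular arc of radius $t$ joins $\Psi_d(t,\alpha)$ to $\Psi_d(t,\beta)$ without meeting $(x_0)$, giving \ref{3}; and pushing a Tits geodesic from $\alpha$ to $\beta$ forward by $\overline\Psi_d$ (which preserves the angle metric, again by Lemma~\ref{flat sector}) gives \ref{2}. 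So from now on the interesting case is $\angle(\alpha,\beta)=\pi$, although the arguments below do not use this.

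For \ref{1}$\Rightarrow$\ref{2}: since $\angle\bigl(\overline\Psi_d(\xi),\overline\Psi_d(\eta)\bigr)=\angle(\xi,\eta)$ for all $\xi,\eta$ by Lemma~\ref{flat sector}, the map $\overline\Psi_d$ preserves lengths of paths, so a rectifiable path from $\alpha$ to $\beta$ in $\partial_\angle X$ pushes forward to one from $\overline\Psi_d(\alpha)$ to $\overline\Psi_d(\beta)$, giving \ref{2} for every $d$. For \ref{2}$\Rightarrow$\ref{3}: fix $t>0$ and a continuous path $\Gamma\colon[0,1]\to\partial_\angle\conn\bigl(X,(x_0),d\bigr)$ from $\overline\Psi_d(\alpha)$ to $\overline\Psi_d(\beta)$. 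Then $s\mapsto\Gamma(s)(t)$ is a path in the sphere of radius $t$ about $(x_0)$, hence avoids $(x_0)$, joins $\Psi_d(t,\alpha)$ to $\Psi_d(t,\beta)$, and is continuous because $\d\bigl(\Gamma(s)(t),\Gamma(s')(t)\bigr)=2t\sin\bigl(\tfrac12\overline\angle_{(x_0)}(\Gamma(s)(t),\Gamma(s')(t))\bigr)\le 2t\sin\bigl(\tfrac12\angle(\Gamma(s),\Gamma(s'))\bigr)\to0$ as $s'\to s$. Thus \ref{3} holds for that $t$ and $d$, and the quantifier on $d$ is transmitted.

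The substantive implication is \ref{3}$\Rightarrow$\ref{1}. Suppose $\Psi_d(t_0,\alpha)$ and $\Psi_d(t_0,\beta)$ lie in one component of the complete CAT(0) space $\conn\bigl(X,(x_0),d\bigr)$ minus $(x_0)$. Open balls in a CAT(0) space are convex, hence contractible, so $\conn\bigl(X,(x_0),d\bigr)$ is locally path-connected and the components of the open set $\conn\bigl(X,(x_0),d\bigr)\setminus\{(x_0)\}$ are path-connected; choose a path $\Gamma\colon[0,1]\to\conn\bigl(X,(x_0),d\bigr)\setminus\{(x_0)\}$ between our two points. Let $\operatorname{dir}$ send $w\neq(x_0)$ to the direction at $(x_0)$ of the geodesic $[(x_0),w]$, a point of the link $\operatorname{Lk}_{(x_0)}\conn\bigl(X,(x_0),d\bigr)$; using that nearest-point projection $\pi_\epsilon$ onto $\overline\Ball_\epsilon\bigl((x_0)\bigr)$ is $1$-Lipschitz together with $\angle_{(x_0)}\bigl(\operatorname{dir}w,\operatorname{dir}w'\bigr)\le\overline\angle_{(x_0)}\bigl(\pi_\epsilon(w),\pi_\epsilon(w')\bigr)$, one checks $\operatorname{dir}$ is continuous. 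Since $\operatorname{dir}\bigl(\Psi_d(t,\xi)\bigr)$ is the direction of the ray $\overline\Psi_d(\xi)$ and so is independent of $t$, the path $\operatorname{dir}\circ\Gamma$ joins $\operatorname{dir}\bigl(\overline\Psi_d(\alpha)\bigr)$ to $\operatorname{dir}\bigl(\overline\Psi_d(\beta)\bigr)$. The link is a complete CAT(1) space, so balls of radius $<\pi/2$ are geodesic; subdividing $[0,1]$ into finitely many subintervals on which $\operatorname{dir}\circ\Gamma$ has small diameter and concatenating geodesics produces a \emph{rectifiable} path, so $\operatorname{dir}\bigl(\overline\Psi_d(\alpha)\bigr)$ and $\operatorname{dir}\bigl(\overline\Psi_d(\beta)\bigr)$ are at finite length-distance in the link.

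It remains to transport this to $\partial_T X$. By Proposition~\ref{isometric embedding} $\Psi_d$ embeds the complete CAT(0) (hence geodesic) space $\cone(\partial_\angle X)$ isometrically, and its image is convex, because an isometric embedding of a geodesic space sends geodesics to geodesics and $\conn\bigl(X,(x_0),d\bigr)$ is uniquely geodesic. As the image is convex and contains $(x_0)$, the embedding induces an isometric embedding $\operatorname{Lk}_0\bigl(\cone(\partial_T X)\bigr)\hookrightarrow\operatorname{Lk}_{(x_0)}\conn\bigl(X,(x_0),d\bigr)$ with convex image; but the link of the apex of the Euclidean cone $\cone(\partial_T X)$ is $\partial_T X$, and this embedding carries $\xi$ to $\operatorname{dir}\bigl(\overline\Psi_d(\xi)\bigr)$. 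Since a finite length-distance between two points of a convex subset of a complete CAT(1) space is realized inside the subset, we get $\d_T(\alpha,\beta)<\infty$, which is \ref{1}. Finally \ref{1} gives \ref{2} for every $d$, \ref{2} for one $d$ gives \ref{3} for that $d$ and all $t$, and \ref{3} for one $d$ and one $t$ gives \ref{1}, so all listed variants are equivalent. I expect the main obstacle to be exactly this implication \ref{3}$\Rightarrow$\ref{1}: reading off ``finite Tits distance'' from ``same connected component of $\conn\bigl(X,(x_0),d\bigr)\setminus\{(x_0)\}$'' requires assembling several standard but separate facts --- convexity of metric balls (for local path-connectedness), continuity of the direction map into the link, the upgrade of a topological path in a CAT(1) space to a rectifiable one, and the identification of a convex isometric copy of $\partial_T X$ inside $\operatorname{Lk}_{(x_0)}\conn\bigl(X,(x_0),d\bigr)$ --- whereas the remaining implications are routine given Lemma~\ref{flat sector} and Proposition~\ref{isometric embedding}.
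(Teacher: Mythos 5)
Your implications $(\ref{1})\Rightarrow(\ref{2})\Rightarrow(\ref{3})$ are fine and essentially match the paper, which treats them as immediate from Lemma \ref{flat sector}. The gap is in the last step of $(\ref{3})\Rightarrow(\ref{1})$. What your link argument actually produces is a rectifiable path joining the directions of $\overline\Psi_d(\alpha)$ and $\overline\Psi_d(\beta)$ inside the link $L$ of $(x_0)$ in $\conn\bigl(X,(x_0),d\bigr)$; but $L$ is in general strictly larger than the isometric copy $S$ of $\partial_\angle X$ (it contains at least the directions of all rays $\overline\Psi_d^\omega(\alpha_n)$ with $(\alpha_n)\in\partial_\angle^\omega X$), and your path $s\mapsto\operatorname{dir}\bigl(\Gamma(s)\bigr)$ has no reason to stay in $S$. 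The principle you invoke to bridge this --- that two points of a convex subset of a complete CAT(1) space at finite ambient length-distance are at finite length-distance within the subset --- is false: two antipodal points on a circle of circumference $3\pi$ form a (vacuously) convex subset at finite ambient length-distance with no connecting path inside the subset. More to the point, in the only nontrivial case $\angle(\alpha,\beta)=\pi$, convexity of $S$ only controls geodesics between points \emph{of} $S$ at distance $<\pi$, and gives no way to push a path whose intermediate points lie outside $S$ back into $S$. So the finiteness you establish lives in the wrong space, and the implication is not proved.

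The paper closes exactly this gap by returning to $\partial X$ instead of staying in the link: it retracts $\Gamma$ to a path $\gamma$ at constant distance $t_0$ from $(x_0)$, subdivides so that $\d\bigl(\gamma(\tfrac ik),\gamma(\tfrac{i+1}k)\bigr)<t_0$, chooses geodesics $\alpha^i_n$ from $x_0$ to representatives $x^i_n$ of $\gamma(\tfrac ik)$, and sets $\alpha_i=\lim\nolimits^\omega\alpha^i_n$ in the compact visual boundary (so $\alpha_0=\alpha$, $\alpha_k=\beta$). The resulting estimate $\d\bigl(\alpha^i_n(t_0d_n),\alpha^{i+1}_n(t_0d_n)\bigr)\leq\tfrac32 t_0d_n$, combined with convexity of the metric and $2\sin\bigl(\angle(\alpha_i,\alpha_{i+1})/2\bigr)=\lim_{t\to\infty}\d\bigl(\alpha_i(t),\alpha_{i+1}(t)\bigr)/t$, forces $\angle(\alpha_i,\alpha_{i+1})<\pi$ for each $i$ and hence $\d_T(\alpha,\beta)\leq k\pi<\infty$. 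To salvage your approach you would need an analogous mechanism converting each short step of your path in $L$ into a pair of points of $\partial X$ at angle $<\pi$ --- which is, in substance, the paper's argument.
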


\begin{proof}
	
	The implication  $(\ref{1})\Rightarrow(\ref{2})$ follows immediately from Lemma \ref{flat sector}, and $(\ref{2})\Rightarrow(\ref{3})$ is trivial.  Thus we need only show $(\ref{3})\Rightarrow(\ref{1})$.

Let $\gamma': [0,1]\to \conn\bigl(X, (x_0), d\bigr)\backslash \bigl\{(x_0)\bigr\}$ be a path from $\Psi_d(1,\alpha)$ to $\Psi_d(1,\beta)$.  Since $\d\bigl(\im(\gamma'), (x_0)\bigr)>0$, we can use the geodesic retraction of $\conn\bigl(X, (x_0), d\bigr)\backslash \bigl\{(x_0)\bigr\}$ to find a path $\gamma: [0,1]\to \conn\bigl(X, (x_0), d\bigr)\backslash \bigl\{(x_0)\bigr\}$ from $\Psi_d(t_0,\alpha)$ to $\Psi_d(t_0,\beta)$ such $\d\bigl(\gamma(s), (x_0)\bigr) = t_0$ for all $s$.

Then we can fix $k$ sufficiently large such that $\d\bigl(\gamma(\frac ik), \gamma(\frac{i+1}{k})\bigr)< t_0$ for all $0 \leq i\leq k-1$.  Fix $\alpha^i_n $ a geodesic from $x_0$ to $x_n^i$ such that $(x_n^i) = \gamma(\frac ik)$.  Let $\alpha_i = \lim_n^\omega \alpha^i_n$.  Notice that $\alpha = \alpha_0$ and $\beta = \alpha_k$.  Fix $l\in \mathbb N$.  Then there exists an $\omega$-large set, $A$, such that $\d\bigl(\alpha_i(l), \alpha_n^i(l)\bigr)< 1$  and $\d\bigl(\alpha_n^i(t_0d_n), \alpha_n^{i+1}(t_0d_n)\bigr)\leq \frac{3}{2}t_0d_n$ for all $n\in A$ and for $0\leq i\leq k$.

Thus for any $n\in A$ such that $t_0d_n>l$, we have $$\dfrac{\frac32 t_0d_n + 2}{ t_0d_n}\geq \dfrac{\d\bigl(\alpha_i(l), \alpha_{i+1}(l)\bigr)}{l}$$ by the CAT(0) inequality.  Thus $\angle(\alpha_i, \alpha_{i+1})< \pi$ and $\d_T(\alpha, \beta)< k\pi$.
\end{proof}

\begin{cor}
   Let $X$ be a proper CAT(0) space.  Then $\conn\bigl(X, (x_0), d\bigr)$ has a cut-point if and only if the Tits diameter of $X$ is infinite.
\end{cor}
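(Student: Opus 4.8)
The plan is to read the corollary off of Proposition~\ref{obvious}. That proposition shows that, for $\alpha,\beta\in\partial X$ and any $t>0$, the points $\Psi_d(t,\alpha)$ and $\Psi_d(t,\beta)$ lie in the same component of $\conn\bigl(X,(x_0),d\bigr)\setminus\{(x_0)\}$ exactly when $\d_T(\alpha,\beta)<\infty$; moreover the component of $\Psi_d(t,\alpha)$ does not depend on $t$, since $\Psi_d(\cdot,\alpha)$ is a ray based at $(x_0)$. Hence the map $\alpha\mapsto[\text{component of }\Psi_d(1,\alpha)]$ descends to an injection from the classes of $\partial X$ under ``finite Tits distance'' onto the set of components of $\conn\bigl(X,(x_0),d\bigr)\setminus\{(x_0)\}$ that meet $\im\Psi_d$. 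So the existence of a second such class makes $\conn\bigl(X,(x_0),d\bigr)\setminus\{(x_0)\}$ disconnected, i.e.\ makes $(x_0)$ a cut-point.

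For the implication ``$\diam_T X=\infty\ \Rightarrow\ \conn\bigl(X,(x_0),d\bigr)$ has a cut-point'' this is essentially immediate: pick $\alpha,\beta\in\partial X$ with $\d_T(\alpha,\beta)=\infty$, apply the equivalence $(\ref{1})\Leftrightarrow(\ref{3})$ of Proposition~\ref{obvious} to put $\Psi_d(t,\alpha)$ and $\Psi_d(t,\beta)$ in distinct components of $\conn\bigl(X,(x_0),d\bigr)\setminus\{(x_0)\}$, and conclude that $(x_0)$ is a cut-point.

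For the converse, ``a cut-point of $\conn\bigl(X,(x_0),d\bigr)$ forces $\diam_T X=\infty$,'' the obstacle is that an arbitrary cut-point $c$ need not be the observation point and $\Psi_d$ need not be surjective. Here I would first relabel: if $c=(c_n)$, then since $(c_n)$ represents a point of the cone, $\conn\bigl(X,(c_n),d\bigr)$ and $\conn\bigl(X,(x_0),d\bigr)$ are literally the same metric space (same admissible sequences, same identifications, same metric), only with the observation point moved to $c$. So after relabelling $c$ is the observation point and $\conn\bigl(X,(c_n),d\bigr)\setminus\{c\}$ is disconnected. Now I would run the argument of Proposition~\ref{obvious} (really Lemma~\ref{flat sector}) in its $\omega$-parametrized form, with $\Psi^\omega_d$ and $\partial^\omega_\angle X$ in place of $\Psi_d$ and $\partial X$: because $\Psi^\omega_d$ is surjective (Proposition~\ref{contracting}), each of the two sides of the cut meets $\im\Psi^\omega_d$, so we get $(\alpha_n),(\beta_n)\in\partial^\omega_\angle X$ with $\Psi^\omega_d(t,(\alpha_n))$ and $\Psi^\omega_d(t,(\beta_n))$ in distinct components; the flat-sector calculation then forces $\lim^\omega\d_T(\alpha_n,\beta_n)=\infty$, hence $\d_T$ is unbounded on $\partial X$ and $\diam_T X=\infty$.

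I expect the converse to be the main obstacle, and specifically the two points where more than ``proper'' is at work: the bookkeeping required to transport Proposition~\ref{obvious} and the flat-sector estimates to a varying sequence of basepoints $c_n$, and — more essentially — the surjectivity of $\Psi^\omega_d$, which is where a cocompactness hypothesis enters. Without it the two sides of a cut-point can be entirely missed by $\Psi^\omega_d$; for instance $X=[0,\infty)$ has $\conn\bigl(X,(x_0),d\bigr)\cong[0,\infty)$, which is full of cut-points, while $\partial_T X$ is a single point. So the argument above really proves the corollary for proper cocompact $X$, and the ``proper'' hypothesis should be read with that surjectivity input in mind (or else the degenerate behavior of $\partial X$ ruled out directly).
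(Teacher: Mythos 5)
The paper states this corollary without proof, so the comparison is against the evident intended derivation from Proposition \ref{obvious}; your argument is essentially that derivation, fleshed out, and your main structural choices (reading the component of $\Psi_d(t,\alpha)$ in $\conn\bigl(X,(x_0),d\bigr)\setminus\{(x_0)\}$ as an invariant of the finite-Tits-distance class, relabelling an arbitrary cut-point as the observation point, and invoking surjectivity of $\Psi^\omega_d$ to see both sides of the cut) are the right ones. Your observation that the converse genuinely requires cocompactness, together with the counterexample $X=[0,\infty)$ (asymptotic cone again $[0,\infty)$, full of cut-points, Tits boundary a single point), is correct and identifies a real defect in the statement as printed: the hypothesis should read ``proper cocompact,'' as in the neighbouring results, and surjectivity of $\Psi^\omega_d$ (Proposition \ref{contracting}) is exactly where cocompactness enters.

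Two gaps remain on your side. First, in the forward direction you pass from ``the Tits diameter of $X$ is infinite'' to ``there exist $\alpha,\beta$ with $\d_T(\alpha,\beta)=\infty$.'' These are not a priori equivalent: a complete CAT(1) space can have all pairwise distances finite yet unbounded (e.g.\ a line), and Proposition \ref{obvious} only converts an actually infinite distance into a separation by $(x_0)$; the case of finite but unbounded Tits distances is covered neither by your argument nor by the paper's implicit one. It can be closed by quoting the equivalence, mentioned immediately after the corollary, between infinite Tits diameter and a periodic Morse geodesic (whose endpoints lie at infinite Tits distance), or by running the chaining argument below with a uniform bound on the number of links. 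Second, in the converse the step ``the flat-sector calculation forces $\lim^\omega\d_T(\alpha_n,\beta_n)=\infty$'' is where all the content sits: the tool for non-constant sequences is Lemma \ref{isometric embedding2} rather than Lemma \ref{flat sector}, and to prove the contrapositive one must subdivide a Tits path from $\alpha_n$ to $\beta_n$ of uniformly bounded length into boundedly many pieces of angle $<\pi$, obtain flat sectors in $\conn\bigl(X,(x_0),\tilde d\bigr)$ for a suitable $\tilde d\succeq d$, and push them forward by $\Theta^{\tilde d}_d$, which preserves distance to the observation point and hence keeps the connecting path away from the cut-point. With those two points supplied, your proof is complete and establishes the corrected (cocompact) statement.
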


Since, under a cocompact action, having infinite Tits diameter is equivalent to having a periodic Morse geodesic, we have the following.

\begin{cor}Let $X$ be a proper cocompact CAT(0) space.
If $\conn\bigl(X, (x_0), d\bigr)$ has a cut-point, then all asymptotic cones of $X$ have cut-points and $X$ contains a periodic Morse geodesic.
\end{cor}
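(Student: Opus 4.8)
The plan is to combine the preceding corollary with the observation that the quantity it characterizes---the Tits diameter of $X$---is an invariant of $X$ alone, independent of the ultrafilter $\omega$ and the scaling sequence $d$ used to build an asymptotic cone. First I would suppose that $\conn\bigl(X, (x_0), d\bigr)$ has a cut-point for the given $\omega$ and $d$. By the previous corollary (which needs only properness), the Tits diameter of $X$ is infinite. Since this conclusion refers to $X$ only, the same corollary applied in the reverse direction shows that $\operatorname{Con}^{\omega'}\bigl(X, (x_0), d'\bigr)$ has a cut-point for \emph{every} nonprincipal ultrafilter $\omega'$ and every $\omega'$-divergent scaling sequence $d'$; that is, all asymptotic cones of $X$ have cut-points.

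It then remains to produce a periodic Morse geodesic in $X$. Here I would invoke cocompactness via the fact recalled in the remark preceding the statement: under a cocompact action, $X$ having infinite Tits diameter is equivalent to $X$ containing a periodic Morse geodesic. Since we have already established that the Tits diameter of $X$ is infinite, this equivalence yields the desired periodic Morse geodesic, completing the argument.

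I do not expect a serious obstacle here, as the statement is a formal consequence of the preceding corollary together with the cited characterization of infinite Tits diameter. The one point that deserves a moment's care is the mismatch between hypothesis and conclusion---the hypothesis posits a cut-point in a \emph{single} asymptotic cone while the conclusion quantifies over \emph{all} asymptotic cones---but this is precisely resolved by routing through the Tits diameter of $X$, which is the same for every choice of $\omega$ and $d$. Thus the role of the intermediate ``if and only if'' is exactly to decouple the two quantifiers.
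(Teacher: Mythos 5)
Your proposal is correct and is exactly the argument the paper intends: the corollary is stated without proof as an immediate consequence of the preceding corollary (cut-point in a cone $\Leftrightarrow$ infinite Tits diameter, valid for every $\omega$ and $d$) together with the remark that, under a cocompact action, infinite Tits diameter is equivalent to having a periodic Morse geodesic. Your observation that the Tits diameter depends only on $X$ and so decouples the single-cone hypothesis from the all-cones conclusion is precisely the point.
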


\section{Questions}
Theorem \ref{inverse limit} shows that the limit of asymptotic cones contains a canonical copy of $\cone(\partial_\angle^\omega X)= \bigl(\cone(\partial_T X)\bigr)^\omega$ for proper cocompact CAT(0) spaces.  However, it is not clear  even in simple cases, for example where $X$ is a tree, if this embedding is surjective.

\begin{quest}
    Is the inverse limit $\lim\limits_{\leftarrow} \Bigl(\conn\bigl(X, (x_0), d\bigr), \Theta_d^{d}\Bigr)$ isometric to $\cone(\partial_\angle^\omega X)$?
\end{quest}

Example \ref{example} shows that in general $f^d$ cannot be a homeomorphism.  However, it would be interesting to consider if $f^d$ is a homotopy equivalence or if $f^d$ preserves homotopy groups.

\begin{quest}
  Let $f:X\to Y$ be a quasi-isometry of proper cocompact CAT(0) spaces.  Does the continuous map $f^d : \partial_TX \to \partial_TY$ induces an injective homomorphism of homotopy groups?
\end{quest}


\bibliographystyle{plain}
\bibliography{../../../bib}

\end{document}